\documentclass[a4paper,11pt,leqno,english]{amsart}
\usepackage{enumerate}
\usepackage{amssymb,latexsym}
\usepackage[T1]{fontenc}
\usepackage[centering]{geometry}
\usepackage{url}
\usepackage[french, main=english]{babel}
\usepackage[utf8]{inputenc}
\usepackage[all]{xy}
\usepackage{mathrsfs}
\usepackage[dvipsnames]{xcolor}
\usepackage{comment}
\definecolor{violet}{rgb}{0.0,0.2,0.7}
\definecolor{rouge2}{rgb}{0.8,0.0,0.2}
\usepackage{tikz}
\usepackage{empheq}
\usepackage{tikz-cd}
\usetikzlibrary{matrix,arrows,decorations.pathmorphing}
\usepackage[backref=page]{hyperref}
\usepackage[nameinlink, capitalize]{cleveref}
\hypersetup{
    bookmarks=true,         
    unicode=false,          
    pdftoolbar=true,        
    pdfmenubar=true,        
    pdffitwindow=false,     
    pdfstartview={FitH},    
    pdftitle={},            
    pdfauthor={},           
    colorlinks=true,        
    linkcolor=rouge2,       
    citecolor=violet,       
    filecolor=black,        
    urlcolor=cyan}          
\setcounter{tocdepth}{1}
\usepackage{enumitem, xspace}
\usepackage{appendix}

\setlist[enumerate]{
  label=(\thethm.\arabic*),
  before={\setcounter{enumi}{\value{equation}}},
  after={\setcounter{equation}{\value{enumi}}},
  itemsep=1ex
}

\setlist[itemize]{
  leftmargin=*,
  topsep=1ex,
  itemsep=1ex,
  label=$\circ$
}

\theoremstyle{plain}    
\newtheorem{thm}{Theorem}[section]
\theoremstyle{plain} 
\newtheorem{bigthm}{Theorem}

\renewcommand{\thebigthm}{\Alph{bigthm}} 
 \numberwithin{equation}{thm} 
 \numberwithin{figure}{section} 
 \newtheorem{cor}[thm]{Corollary} 
 \theoremstyle{plain}    
 \newtheorem{prop}[thm]{Proposition} 
 \theoremstyle{plain}    
 \newtheorem{lem}[thm]{Lemma} 
 \theoremstyle{remark}
 \newtheorem{claim}[thm]{Claim} 
 \theoremstyle{remark}
 \newtheorem{rem}[thm]{Remark}
 \newtheorem*{rem-plain}{Remark}
\newtheorem{exa}[thm]{Example}
\theoremstyle{definition}  
\newtheorem{setup}[thm]{Setup}
\theoremstyle{plain}

\theoremstyle{definition}
\newtheorem{defi}[thm]{Definition}

\newtheorem{conj}[thm]{Conjecture}



\newcommand{\inv}{^{-1}}
\newcommand{\from}{\colon}

\newcommand{\imp}{\Rightarrow}
\newcommand{\lto}{\longrightarrow}

\newcommand{\x}{\times}
\newcommand{\inj}{\hookrightarrow}

\newcommand{\bij}{\xrightarrow{\,\smash{\raisebox{-.5ex}{\ensuremath{\scriptstyle\sim}}}\,}}
\newcommand{\isom}{\cong}
\newcommand{\defn}{\coloneqq}

\newcommand{\tensor}{\otimes}

\newcommand{\wt}{\widetilde}

\newcommand{\wh}{\widehat}

\newcommand{\dd}{\mathrm d}

\newcommand{\dual}{^{\smash{\scalebox{.7}[1.4]{\rotatebox{90}{\textup\guilsinglleft}}}}}
\newcommand{\ddual}{^{\smash{\scalebox{.7}[1.4]{\rotatebox{90}{\textup\guilsinglleft} \hspace{-.5em} \rotatebox{90}{\textup\guilsinglleft}}}}}
\newcommand{\acts}{\ \rotatebox[origin=c]{-90}{\ensuremath{\circlearrowleft}}\ }

\newcommand{\factor}[2]{\left. \raise 2pt\hbox{$#1$} \right/\hskip -2pt \raise -2pt\hbox{$#2$}}

\newdir{ ir}{{}*!/-5pt/@^{(}} 
\newdir{ il}{{}*!/-5pt/@_{(}} 

\DeclareMathOperator{\rk}{rk}

\DeclareMathOperator{\tor}{tor}

\DeclareMathOperator{\Exc}{Exc}

\DeclareMathOperator{\supp}{supp}

\DeclareMathOperator{\Deflt}{Def^{lt}}

\DeclareMathOperator{\Gr}{Gr}


\newcommand{\set}[1]{\left\{ #1 \right\}}

\def\rd#1.{\lfloor{#1}\rfloor}
\def\rp#1.{\lceil{#1}\rceil}
\def\tw#1.{\langle{#1}\rangle}

\newcommand{\la}{\langle}
\newcommand{\ra}{\rangle}


\renewcommand{\O}[1]{\mathscr{O}_{#1}}
\newcommand{\Omegap}[2]{\Omega_{#1}^{#2}}
\newcommand{\Omegar}[2]{\Omega_{#1}^{[#2]}}
\newcommand{\T}[1]{\mathscr{T}_{#1}}
\newcommand{\can}[1]{\omega_{#1}}

\newcommand{\Reg}[1]{{#1}_{\mathrm{reg}}}
\newcommand{\sing}[1]{{#1}_{\mathrm{sg}}}

\newcommand{\codim}[2]{\mathrm{codim}_{#1}(#2)}

\newcommand{\cc}[2]{\mathrm{c}_{#1}(#2)}
\newcommand{\cpc}[3]{\mathrm{c}_{#1}^{#2}(#3)}
\newcommand{\ccorb}[2]{\mathrm{\tilde c}_{#1}(#2)}

\newcommand{\PH}[1]{\mathrm{PH}_{#1}}

\def\Hnought#1.#2.{\mathit{\Gamma} \!\left( #1, #2 \right)}
\def\HH#1.#2.#3.{\mathrm{H}^{#1} \!\left( #2, #3 \right)}
\def\HHs#1.#2.{\mathrm{H}^{#1} \!\left( #2 \right)}
\def\hh#1.#2.#3.{h^{#1} \!\left( #2, #3 \right)}
\def\RR#1.#2.#3.{R^{#1} #2_* #3}
\def\HHc#1.#2.#3.{\mathrm{H}_{\mathrm{c}}^{#1} \!\left( #2, #3 \right)}
\def\Hh#1.#2.#3.{\mathrm{H}_{#1} \!\left( #2, #3 \right)}
\def\Hhs#1.#2.{\mathrm{H}_{#1} \!\left( #2 \right)}
\def\Hom#1.#2.{\mathrm{Hom} \!\left( #1, #2 \right)}
\def\sHom#1.#2.{\mathscr{H}\!om \!\left( #1, #2 \right)}
\def\Ext#1.#2.#3.{\mathrm{Ext}^{#1} \!\left( #2, #3 \right)}
\def\sExt#1.#2.#3.{\mathscr{E}\!xt^{#1} \!\left( #2, #3 \right)}
\def\Link#1.#2.{\mathrm{Link} \!\left( #1, #2 \right)}

\newcommand{\C}{\ensuremath{\mathbb{C}}}
\newcommand{\N}{\ensuremath{\mathbb{N}}}
\newcommand{\PP}{\ensuremath{\mathbb{P}}}
\newcommand{\Q}{\ensuremath{\mathbb{Q}}}
\newcommand{\R}{\ensuremath{\mathbb{R}}}

\newcommand{\Z}{\ensuremath{\mathbb{Z}}}

\newcommand{\kahler}{K{\"{a}}hler\xspace}

\newcommand{\lt}{locally trivial\xspace}

\newcommand{\qe}{quasi-\'etale\xspace}

\def\1{\bold{1}}

\newcommand{\fA}{{\mathfrak{A}}}

\newcommand{\fX}{{\mathfrak{X}}}

\newcommand{\fY}{{\mathfrak{Y}}}

\newcommand{\wX}{\widehat{X}}
\newcommand{\wE}{\widehat{\mathscr E}}
\newcommand{\wF}{\widehat{\mathscr F}}

\newcommand{\om}{\omega}

\newcommand{\ome}{\om_{\ep}}

\newcommand{\ep}{\varepsilon}

\newcommand{\sE}{\mathscr E}
\newcommand{\sF}{\mathscr F}
\newcommand{\sG}{\mathscr G}

\newcommand{\sL}{\mathscr L}

\newcommand{\lref}{\labelcref}



\def\todo#1.#2{ %
  \textcolor{Mahogany}{ %
    \footnotesize %
    \newline %
    {\color{Mahogany}\fbox{\parbox{.97\textwidth}{\textbf{#1:} #2}}} %
    \newline %
  } %
}



\definecolor{forrest}{RGB}{81,133,49}
\definecolor{mydarkblue}{RGB}{10,92,153}





\title{Numerical characterization of complex torus quotients}

\author[Claudon]{Beno\^it Claudon}
\address{Univ Rennes, CNRS, IRMAR --- UMR 6625, F--35000 Rennes, France et Institut Universitaire de France}
\email{\href{mailto:benoit.claudon@univ-rennes1.fr}{benoit.claudon@univ-rennes1.fr}}
\urladdr{\href{https://perso.univ-rennes1.fr/benoit.claudon/}{perso.univ-rennes1.fr/benoit.claudon/}}

\author[Graf]{Patrick Graf}
\address{Lehrstuhl f\"ur Mathematik I, Universit\"at Bayreuth, 95440 Bayreuth, Germany}
\email{\href{mailto:patrick.graf@uni-bayreuth.de}{patrick.graf@uni-bayreuth.de}}
\urladdr{\href{http://www.pgraf.uni-bayreuth.de/en/}{www.graficland.uni-bayreuth.de}}

\author[Guenancia]{Henri Guenancia}
\address{Institut de Math\'ematiques de Toulouse, Universit\'e Paul Sabatier, 31062 Toulouse Cedex~9, France}
\email{\href{mailto:henri.guenancia@math.cnrs.fr}{henri.guenancia@math.cnrs.fr}}
\urladdr{\href{https://hguenancia.perso.math.cnrs.fr/}{hguenancia.perso.math.cnrs.fr/}}

\date{\today}
\keywords{Complex tori, vanishing Chern classes, klt singularities, stable sheaves, Bogomolov--Gieseker inequality}
\subjclass[2010]{32J27, 14J60}

\hypersetup{
  pdfauthor={Beno\^it Claudon, Patrick Graf, and Henri Guenancia},
  pdftitle={Numerical characterization of complex torus quotients},
  pdfkeywords={Complex tori, vanishing Chern classes, klt singularities, stable sheaves, Bogomolov--Gieseker inequality},
  pdfstartview={Fit},
  pdfpagelayout={OneColumn},
  pdfpagemode={UseNone},
  linktoc=all,
  breaklinks,
  linkcolor=rouge2,
  citecolor=violet,
  urlcolor=[RGB]{0 96 0},
  colorlinks}

\begin{document}

\begin{abstract}
This article gives a characterization of quotients of complex tori by finite groups acting freely in codimension two in terms of a numerical vanishing condition on the first and second Chern class.
This generalizes results previously obtained by Greb--Kebekus--Peternell in the projective setting, and by Kirschner and the second author in dimension three.
As a key ingredient to the proof, we obtain a version of the Bogomolov--Gieseker inequality for stable sheaves on singular spaces, including a discussion of the case of equality.
\end{abstract}

\maketitle


\section{Introduction}

Let $X$ be a compact \kahler manifold of dimension $n$ such that $\cc1X=0\in \HH2.X.\R.$.
An important application of Yau's resolution of the Calabi conjecture \cite{Yau78} is the following uniformization result:
$X$ is isomorphic to the quotient $\factor TG$ of a complex torus $T$ by a finite group $G$ acting freely on $T$ if and only if there exists a \kahler class $\alpha \in \HH2.X.\R.$ such that $\cc2X\cdot \alpha^{n-2}=0$.

In recent years, a lot of effort has been devoted to generalizing the above uniformization criterion to a class of singular varieties arising naturally in the Minimal Model Program, called varieties with Kawamata log terminal singularities (klt for short). A first roadblock consists in defining Chern classes, e.g.~$\cc2X$.
This is quite delicate for singular varieties, as several possible definitions exist that do not coincide in general.
We will gloss over this problem in the introduction and refer to \cref{section chern classes} and the references therein for a more in-depth discussion.

In the projective case, i.e.~when $X$ is a projective variety with klt singularities with $\cc1X = 0$ and $\alpha = \cc1\sL$ is the class of an ample line bundle $\sL$, the uniformization problem has been solved by~\cite{GKP16} and~\cite{LuTaji18}.
More precisely, Greb--Kebekus--Peternell~\cite{GKP16} proved that if $X$ is smooth in codimension two, then $X$ is the quotient of an abelian variety provided that $\cc2X \cdot \alpha^{n-2} = 0$.
Later on, Lu--Taji~\cite{LuTaji18} were able to lift the assumption about the codimension of the singular locus using the theory of orbifold Chern classes initiated by Mumford~\cite{MR717614}.

In the transcendental case however, the slicing arguments used in \emph{loc.~cit.} to reduce to a complete intersection surface  are certainly not available anymore, and new ideas are required.
The threefold case has recently been settled by Kirschner and the second author~\cite{GK20}, but the key techniques therein do not seem to generalize to the higher dimensional case.
In this paper, we rely on the recent Beauville--Bogomolov decomposition theorem~\cite{BakkerGuenanciaLehn20} as well as our Bogomolov--Gieseker inequality, \cref{BG ineq} below, to settle the uniformization problem in the transcendental case, assuming that $X$ is smooth in codimension two.

\begin{bigthm} \label{thmA}
Let $X$ be a compact complex space of dimension $n$ with klt singularities, smooth in codimension two.
The following are equivalent:
\begin{enumerate}[label=(\thebigthm.\arabic*)]
\item\label{A.1} We have $\cc1X = 0 \in \HH2.X.\R.$, and there exists a \kahler class $\alpha \in \HH2.X.\R.$ as well as a resolution of singularities $f \from Y \to X$ which is an isomorphism over~$\Reg X$ such that
\[ \int_Y \cc2Y \wedge (f^* \alpha)^{n-2} = 0. \]
\item\label{A.2} There exists a complex $n$-torus $T$ and a holomorphic action of a finite group $G \acts T$, free in codimension two, such that $X \isom \factor TG$.
\end{enumerate}
\end{bigthm}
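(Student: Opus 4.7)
The direction \ref{A.2} $\Rightarrow$ \ref{A.1} is the easy one. If $X \isom T/G$ with $G$ acting freely in codimension two, then $X$ has klt (in fact quotient) singularities, the singular locus has codimension at least three in $X$, and $K_X$ is trivial so $\cc1X = 0$. On $\Reg X$ the tangent sheaf descends from the flat sheaf $T_T$ and is therefore flat, and the codimension bound should allow one to verify that $\int_Y \cc2Y \wedge (f^*\alpha)^{n-2} = 0$ for any resolution $f \from Y \to X$ which is an isomorphism over $\Reg X$ and any \kahler class $\alpha$ on $X$.

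For the substantial direction \ref{A.1} $\Rightarrow$ \ref{A.2}, the plan is to combine the Bogomolov--Gieseker inequality of \cref{BG ineq} with the Beauville--Bogomolov decomposition theorem of~\cite{BakkerGuenanciaLehn20}. Since $X$ is smooth in codimension two, the integral in~\ref{A.1} can be reinterpreted as an intersection number $\cc2{T_X} \cdot \alpha^{n-2}$ that is independent of the chosen resolution (see \cref{section chern classes}). Combined with $\cc1X = 0$, the Bogomolov discriminant of the reflexive tangent sheaf $T_X$ then satisfies
$$\Delta(T_X) \cdot \alpha^{n-2} \;=\; 2n \cdot \cc2{T_X} \cdot \alpha^{n-2} \;=\; 0.$$
As $T_X$ has slope zero with respect to $\alpha$, applying \cref{BG ineq} lands us in the equality case; from this I would extract that $T_X$ is $\alpha$-polystable and that each stable summand is, in a suitable sense, projectively flat on $\Reg X$.

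Next, apply the decomposition theorem of~\cite{BakkerGuenanciaLehn20} to obtain a finite quasi-\'etale cover $\pi \from \wt X \to X$ with
$$\wt X \;\isom\; T_0 \times \prod_{i} Y_i \times \prod_{j} Z_j,$$
where $T_0$ is a complex torus, the $Y_i$ are irreducible Calabi--Yau, and the $Z_j$ are irreducible holomorphic symplectic. The pullback $\tilde\alpha \defn \pi^*\alpha$ inherits the Bogomolov--Gieseker equality on $\wt X$. A Künneth-type argument then shows that for each factor $W$ of positive dimension one has $\cc2W \cdot (\tilde\alpha|_W)^{\dim W - 2} = 0$. This is impossible for a non-torus factor: for an irreducible symplectic $Z_j$, Fujiki's formula yields $\cc2{Z_j} \cdot (\tilde\alpha|_{Z_j})^{\dim Z_j - 2} > 0$, a contradiction; for an irreducible Calabi--Yau $Y_i$, the equality case of \cref{BG ineq} applied to $T_{Y_i}$ would force $T_{Y_i}$ to be flat, making $Y_i$ itself a finite quotient of a torus, contradicting the definition of an irreducible Calabi--Yau factor. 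Hence $\wt X = T_0$ is a torus, and setting $G \defn \Gal(\wt X / X)$ we obtain $X \isom T_0/G$ with $G$ acting freely in codimension two, as $\pi$ is quasi-\'etale.

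The main obstacle I anticipate is the second step above: translating the equality case of the singular Bogomolov--Gieseker inequality into a flatness-type statement for $T_X$ concrete enough to propagate through the quasi-\'etale cover and to interact cleanly with the product decomposition. Once that is in hand, the Künneth reduction and the case-by-case exclusion of the irreducible Calabi--Yau and holomorphic symplectic factors of positive dimension are more formal, and the conclusion follows.
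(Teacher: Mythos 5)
Your overall architecture coincides with the paper's: reinterpret the hypothesis as $\cc2{\T X}\cdot\alpha^{n-2}=0$, pass to the quasi-\'etale cover $\wt X = T_0\times\prod Y_i\times\prod Z_j$ given by the decomposition theorem, show by a K\"unneth argument that each factor has vanishing $\mathrm c_2$ against the induced \kahler class (for this you implicitly need the semipositivity of $\mathrm c_2$ on each factor, which is \cref{positivity c2} and rests on the semistable version \cref{BG ss} of the inequality, since the tangent sheaf is only polystable), and kill the IHS factors via the Fujiki relations. All of that matches the paper.

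The genuine gap is in your elimination of the ICY factors. You claim that ``the equality case of \cref{BG ineq} applied to $\T{Y_i}$ would force $\T{Y_i}$ to be flat.'' But \cref{BG ineq} contains no flatness conclusion: parts \lref{B.2} and \lref{B.3} only assert that the vanishing propagates from one \kahler class to every \kahler class. Extracting hermitian flatness of $(\T{\Reg X},h)$ from the numerical equality is precisely the step the paper identifies as out of reach in the singular setting (see the discussion of \cite{CW21} in the introduction): one would need to know that $\cc2X\cdot\alpha^{n-2}$ equals the curvature integral $\int_{\Reg X}\cc2{\T{\Reg X},h}\wedge\omega^{n-2}$ of an admissible Hermite--Einstein metric, and this identification is not established. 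The paper's actual argument sidesteps flatness entirely: an ICY factor $Y_i$ is projective because $\HH2.Y_i.\O{Y_i}.=0$ and Kodaira embedding applies, part \lref{B.3} then transfers the vanishing from the transcendental class $b_i$ to an \emph{ample} class, and the already-known projective case \cite[Theorem~7.1]{GKP14} shows $Y_i$ is a torus quotient, contradicting the definition of ICY. You should replace your flatness claim by this reduction; the same issue affects your earlier assertion that the equality case makes the stable summands of $\T X$ ``projectively flat on $\Reg X$'', which is neither proved nor needed anywhere in the argument.
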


\subsection*{Strategy of proof of \cref{thmA}}

In the smooth case, the cohomological assumption $\cc1X = 0$ can be turned into the existence of a Ricci-flat \kahler metric $\omega$ in the class~$\alpha$~\cite{Yau78}.
The numerical condition $\cc2X \cdot \alpha^{n-2} = 0$ can then be easily translated into the vanishing of the full curvature tensor of $\omega$~\cite{CY75}.
When $X$ is merely a compact klt \kahler space with vanishing first Chern class, it is known that $X$ admits a singular Ricci-flat metric~\cite{EGZ}, but the curvature tensor of $(\Reg X, \omega)$ is not known to be $L^2$-integrable in general, and there does not seem to be a way to compute the quantity $\cc2X \cdot \alpha^{n-2}$ using $\omega$.

To circumvent this intrinsic difficulty, in addition to the Beauville--Bogomolov Decomposition Theorem cited above, we use the following statement that generalizes the classical Bogomolov--Gieseker inequality to the singular case.
We refer to \cref{section chern classes} for a more detailed discussion of the notion of Chern classes used below.

\begin{bigthm}[Bogomolov--Gieseker inequality] \label{BG ineq}
Let $X$ be a normal compact \kahler space of dimension $n$ together with a \kahler class $\alpha \in \HH2.X.\R.$.
Assume that $X$ is smooth in codimension two.
Furthermore, let $\sE$ be a rank~$r$ reflexive coherent sheaf on $X$ which is slope stable with respect to $\alpha$.
\begin{enumerate}[label=(\thebigthm.\arabic*)]
\setcounter{enumi}{0}
\item\label{B.1} The discriminant $\Delta(\sE) \defn 2r \, \cc2\sE - (r - 1) \, \cpc12\sE$ satisfies the inequality
\[ \Delta(\sE) \cdot \alpha^{n-2} \geq 0. \]
\item\label{B.2} If equality holds in~\lref{B.1}, then we have
\[ \Delta(\sE) \cdot \beta^{n-2} = 0 \]
for \emph{any} \kahler class $\beta \in \HH2.X.\R.$.
\item\label{B.3} If $\cc2\sE \cdot \alpha^{n-2} = \cpc12\sE \cdot \alpha^{n-2} = 0$, then we have
\[ \cc2\sE \cdot \beta^{n-2} = \cpc12\sE \cdot \beta^{n-2} = 0 \]
for any \kahler class $\beta \in \HH2.X.\R.$.
\end{enumerate}
\end{bigthm}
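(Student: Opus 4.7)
The plan is to reduce the inequality to the classical case on a smooth compact \kahler manifold, by passing to a resolution of $X$ and a family of nearby \kahler classes. On the smooth side, Bando--Siu's extension of the Donaldson--Uhlenbeck--Yau theorem furnishes Hermite--Einstein (HE) metrics on slope-stable reflexive sheaves, and the pointwise Kobayashi--Lübke inequality yields \lref{B.1} after passage to the limit. The rigidity statements \lref{B.2} and \lref{B.3} are obtained by analysing the equality case of this pointwise inequality, upgrading it first to projective flatness and then, under the additional hypothesis of \lref{B.3}, to actual flatness.

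Concretely, fix a resolution $\pi \from Y \to X$ that is an isomorphism over $\Reg X$, a \kahler form $\omega_Y$ on $Y$, and set $\sE_Y \defn (\pi^* \sE)^{\vee\vee}$ together with $\alpha_\e \defn \pi^*\alpha + \e\{\omega_Y\}$ for $\e > 0$ small. Since $\pi$ is an isomorphism over $\Reg X$ and $X \setminus \Reg X$ has codimension at least three, slopes of subsheaves for $\alpha$ on $X$ and for $\pi^*\alpha$ on $Y$ coincide; openness of stability then shows that $\sE_Y$ is $\alpha_\e$-stable for all sufficiently small $\e$. Bando--Siu produces an HE metric $h_\e$ on $\sE_Y$ over the complement of its codimension-two singular locus, and the pointwise Kobayashi--Lübke inequality $\Delta(\sE_Y, h_\e) \wedge \omega_\e^{n-2} \geq 0$ integrates to $\Delta(\sE_Y) \cdot \alpha_\e^{n-2} \geq 0$. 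Letting $\e \to 0^+$ and using the projection formula together with the compatibility of the Chern classes of $\sE$ and $\sE_Y$ (see \cref{section chern classes}), this descends to \lref{B.1}.

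For \lref{B.2}, equality in \lref{B.1} together with pointwise non-negativity forces the integrand to vanish in $L^1$, so the trace-free part $(F_{h_\e})_0$ tends to zero in $L^2$. An Uhlenbeck-type compactness argument then extracts a subsequential limit connection which is projectively flat, so $\End_0(\sE_Y)$ carries a flat connection; in particular $\Delta(\sE_Y) = \cc2{\End(\sE_Y)} = 0$ in $H^{2,2}(Y,\R)$, whence $\Delta(\sE) \cdot \beta^{n-2} = 0$ for every \kahler $\beta$ by pairing with $(\pi^*\beta)^{n-2}$. For \lref{B.3}, the HE trace $\eta_\e \defn \sqrt{-1}\,\tr(F_{h_\e})$ is itself HE for $\det \sE_Y$ with some constant $\lambda_\e$, and decomposes as $\eta_\e = \lambda_\e \omega_\e + (\eta_\e)_0$ with $(\eta_\e)_0$ primitive; the pointwise Hodge--Riemann inequality $(\eta_\e)_0 \wedge (\eta_\e)_0 \wedge \omega_\e^{n-2} \leq 0$ combined with the hypothesis $\cpc12\sE \cdot \alpha^{n-2} = 0$ forces both $\lambda_\e \to 0$ and $(\eta_\e)_0 \to 0$. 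The full curvature $F_{h_\e}$ then tends to zero, $\sE_Y$ becomes flat in the limit, and all its real Chern classes vanish, yielding both claimed vanishings simultaneously.

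The main obstacle is controlling the limit $\e \to 0^+$: the family $\{h_\e\}$ of HE metrics does not converge in any obvious topology, and so the argument must be carried out at the level of curvature densities, using Uhlenbeck-type $L^2$-compactness and continuity of Chern numbers under weak limits. A secondary but essential difficulty is reconciling the intrinsic Chern classes $\cc2\sE$ and $\cpc12\sE$ on the singular space $X$ with the analytic data on the smooth resolution $Y$, a compatibility which crucially relies on the codimension-three smoothness hypothesis (see \cref{section chern classes}).
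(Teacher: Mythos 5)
Your treatment of \lref{B.1} is essentially the paper's argument (\cref{open}): pass to a resolution, perturb the pulled-back class to $f^*\alpha+\e\om_Y$, invoke the Kobayashi--Hitchin correspondence and the pointwise Kobayashi--L\"ubke inequality, and let $\e\to0$. Two remarks. The paper works with $f^\sharp\sE=\factor{f^*\sE}{\tor(f^*\sE)}$, which can be arranged to be locally free, so the classical Uhlenbeck--Yau theorem suffices and Bando--Siu is not needed; this also makes the comparison of Chern numbers with those of $\sE$ a matter of definition rather than a separate compatibility check. More importantly, ``openness of stability then shows\dots'' conceals the technical heart of the proof: $f^*\alpha$ is only nef and big on the resolution, so you are perturbing stability from a \emph{boundary} point of the \kahler cone, which is not an open condition for free. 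One needs a uniform slope gap $\delta>0$ and uniform bounds on $\cc1\wF\cdot(f^*\alpha)^k\cdot\beta^{n-1-k}$ over \emph{all} subsheaves $\wF\subset\wE$ (\cref{stable}); this is available in the literature, but it should at least be cited as the key input.

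For \lref{B.2} and \lref{B.3} your route diverges from the paper's and contains genuine gaps. The $L^2$-vanishing of the trace-free curvature is correct, but the metrics $\om_\e=f^*\om+\e\om_Y$ collapse to $f^*\om$, which degenerates along $\Exc(f)$; the curvature is therefore small only in an $L^2$ norm measured against a collapsing metric, with no control near the exceptional divisor. Uhlenbeck compactness with respect to a fixed metric does not apply here, and the conclusion you extract from it --- that $\Delta(\sE_Y)$, resp.\ all real Chern classes of $\sE_Y$, vanish in $\HH4.\wX.\R.$ --- is strictly stronger than what is true: the paper explicitly remarks that it cannot establish vanishing of these classes, only of their pairings with $(f^*\beta)^{n-2}$ for $\beta$ \kahler on $X$ (typically $\Delta(\wE)$ retains nonzero pieces supported on $\Exc(f)$). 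The paper's substitute is elementary and requires no limit connection: $\om'_\e=f^*\om'+\e\om_Y$ and $\om_\e$ are \emph{uniformly} comparable in $\e$, so the degenerate $L^2$-smallness transfers from one to the other, and the Chern--Weil integrand against $(\om'_\e)^{n-2}$ is bounded pointwise by $\|\Theta_\e\|^2$; for \lref{B.2} one first passes to $\End(\wE)$, which has trivial first Chern class and $\Delta(\End\wE)=2r^2\Delta(\wE)$, to dispose of the trace part.

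The second gap is in your \lref{B.3}: the inference that $\cpc12\sE\cdot\alpha^{n-2}=0$ forces $\lambda_\e\to0$ \emph{and} $(\eta_\e)_0\to0$ is a sign error. Up to positive normalizing constants, the Lefschetz decomposition of the Hermite--Einstein trace gives
\[ \int_{\wX}\cc1{\wE,h_\e}^2\wedge\om_\e^{n-2} \;=\; \frac{\big(\cc1\wE\cdot[\om_\e]^{n-1}\big)^2}{[\om_\e]^n} \;+\; \int_{\wX}(\eta_\e)_0\wedge(\eta_\e)_0\wedge\om_\e^{n-2}, \]
where the first summand is nonnegative and the second is nonpositive; the vanishing of the sum therefore forces neither term to vanish. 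In fact $\lambda_\e$ converges to a positive multiple of $\mu_\alpha(\sE)$, which is not zero under the stated hypotheses, so ``the full curvature tends to zero'' and the ensuing flatness claim fail whenever the slope is nonzero. The argument must be run on the trace-free part (equivalently on $\End(\wE)$), and the conclusion weakened to vanishing of the pairings rather than of the classes.
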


Let us give a couple of remarks about the result above. 
\begin{rem-plain}
Bogomolov--Gieseker inequality \ref{B.1} is also showed to hold for merely {\it semistable} sheaves, cf \cref{BG ss}. 
\end{rem-plain}

\begin{rem-plain}
The definition of the Chern classes used in \cref{BG ineq} does not rely on the assumption that $X$ is smooth in codimension two, and the statements continue to hold (with the same proof) if that assumption is dropped.
However, if $X$ has singularities in codimension two, these notions can behave quite counterintuitively.
For example, it may happen that $\cc1\sE$ is zero, but $\cpc12\sE$ is not (cf.~\cref{exa c2}).
For this reason, we have chosen to include the smoothness in codimension two assumption in the above statement.
\end{rem-plain}

With \cref{BG ineq} at hand, the roadmap to proving \cref{thmA} is then the following:
take $X$ as in the statement, i.e.~klt, smooth in codimension two and with $\cc1X = 0$.

\begin{itemize}
\item Deduce from~\lref{B.1} the semipositivity of the second Chern class, i.e.~$\cc2X \cdot \alpha^{n-2} \geq 0$ for any \kahler class $\alpha$.
This is not entirely immediate because the tangent sheaf $\T X$ will in general not be stable. Therefore, we first generalize~\lref{B.1} to semistable sheaves in \cref{BG ss}, from which the sought semipositivity follows easily.  
\item Decompose a cover of $X$ as a product of a complex torus, irreducible Calabi--Yau (ICY) and irreducible holomorphic symplectic (IHS) varieties.
Use the first step to show that if $\cc2X$ vanishes against a \kahler class, then the same is true of each factor in the above decomposition.
We are thus reduced to showing that for ICY and IHS varieties, $\mathrm c_2$ is \emph{strictly} positive against any \kahler class.
\item If $X$ is an ICY variety (hence projective), argue by contradiction.
Assuming that $\cc2X$ is zero against some \kahler class, use~\lref{B.3} to obtain vanishing against an \emph{ample} class.
This contradicts the projective case of \cref{thmA}.
\item If $X$ is an IHS variety, the required positivity is achieved via a complete description of $\cc2X$ using the (generalized) Fujiki relations, cf.~\cref{Fujiki}.
\end{itemize}

\subsection*{Relation to earlier and recent work}

As mentioned above, the projective case of \cref{thmA} was settled by Shepherd-Barron and Wilson~\cite{SBW94} in dimension three, and by Greb--Kebekus--Peternell~\cite{GKP16} in general, always assuming that $\alpha$ is a rational class.
Lu--Taji~\cite{LuTaji18} later removed the smoothness in codimension two assumption.

Inequality~\lref{B.1} has recently been obtained by Wu~\cite{Wu21}, but he does not discuss the case of equality.
Note that his result is formulated for polystable sheaves, but the proof actually assumes stability.
His line of argument is very similar to ours: pull everything back to a resolution and prove an openness of stability type statement there.
This basic idea goes back at least to~\cite[Proposition~6.9]{DP03}.

Even more recently, Chen and Wentworth~\cite{CW21} have likewise obtained a Bogomolov--Gieseker inequality in a setting similar to ours.
Their results, once combined with our previous paper~\cite{CGGN20}, readily imply that if $X$ is a compact \kahler space with klt singularities, smooth in codimension two and $\cc1X = 0$, then there exists an admissible Hermite--Einstein metric $h$ on $\T{\Reg X}$.
Assuming additionally that $\cc2X \cdot \alpha^{n-2} = 0$ for some \kahler class $\alpha$, this \emph{should} imply that $(\T{\Reg X}, h)$ is hermitian flat and therefore $X$ is a torus quotient by~\cite[Theorem~D]{CGGN20}.
There are at least two reasons why \cref{thmA} is still not an immediate consequence of~\cite{CW21} and~\cite{CGGN20}:
\begin{itemize}
\item In~\cite{CW21} the authors make the quite strong assumption that there is a global embedding $X \inj M$ into a compact \kahler manifold $M$, and that the \kahler metric on $X$ extends to $M$.
This is used in order to define the second Chern class and also to relate Chern numbers to integrals of Chern forms against powers of \kahler metrics.
We do not know any natural condition that would guarantee the existence of such an embedding.
\item A delicate point is to prove that $\cc2X \cdot \alpha^{n-2} = \int_{\Reg X} \cc2{\T{\Reg X}, h} \wedge \omega^{n-2}$ if $\omega$ is a \kahler metric representing $\alpha$.
This is currently not completely clear to us.
\end{itemize}

\subsection*{Open problems}

In \cref{sec open}, we have collected some natural conjectural generalizations of \cref{thmA}.
These concern group actions on complex tori that are not necessarily free in codimension two.
In this case one needs to consider a different notion of second Chern class, as well as pairs in the sense of the Minimal Model Program.

\subsection*{Acknowledgements}

We would like to thank St\'ephane Druel for bringing the paper~\cite{Shokurov92} to our attention, in connection with \cref{conj implication}.
We also thank the referee for her/his careful reading and for her/his questions, helping us to improve the content of this article.
The second author would like to thank Philipp Naumann, Mihai P\u aun and Thomas Peternell for inspiring discussions.

\section{Chern classes on singular spaces} \label{section chern classes}

We will use the following notions of Chern classes on singular spaces.
In what follows, $X$ denotes an $n$-dimensional connected normal compact complex space.

\begin{defi}[Chern classes of sheaves] \label{chern E}
Let $\sE$ be a torsion-free coherent sheaf on $X$, and let $f \from Y \to X$ be a resolution such that $f^\sharp \sE \defn \factor{f^* \sE}{\tor(f^* \sE)}$ is locally free.
For any number $1 \leq i \leq n$ and any class $a \in \HH2n-2i.X.\R.$, we set
\[ \cc i\sE \cdot a \defn \cc i{f^\sharp \sE} \cdot f^*(a) \in \HH2n.Y.\R. = \R. \]
\end{defi}

The Chern classes $\cc i\sE$ thus defined are elements of $\HH2n-2i.X.\R.\dual = \Hh2n-2i.X.\R.$.
For the existence of a resolution with the required property, see~\cite[Theorem~3.5]{Rossi68}.
Well-definedness follows from the simple observation that if $h \from Z \to X$ is a resolution which factors as $Z \xrightarrow{\;g\;} Y \xrightarrow{\;f\;} X$, then there is an exact sequence
\[ g^* \big( \!\tor(f^* \sE) \big) \lto h^* \sE \lto g^* \big( f^\sharp \sE \big) \lto 0, \]
hence $h^\sharp \sE = g^* \big( f^\sharp \sE \big)$ and consequently
\[ \cc i{h^\sharp \sE} \cdot h^*(a) = g^* \cc i{f^\sharp \sE} \cdot g^*(f^* a) = \cc i{f^\sharp \sE} \cdot f^*(a). \]

\begin{rem}[Polynomial combinations of Chern classes] \label{comb chern}
The above definition applies more generally to weighted homogeneous polynomials in the Chern classes, where $\mathrm c_i$ has degree~$i$.
For example, we may set $\cpc12\sE \cdot a \defn \cpc12{f^\sharp \sE} \cdot f^*(a)$ and then $\cpc12\sE$ will be an element of $\Hh2n-4.X.\R.$.
Note that we cannot directly define ``$\cpc12\sE \defn \cc1\sE \cdot \cc1\sE$'' because in general there is no ring structure on the homology $\Hh*.X.\R.$.

In a similar vein, we can also define combinations of Chern classes of different sheaves.
E.g.~if $\sE, \sF$ are torsion-free, we pick a resolution $f \from Y \to X$ such that both $f^\sharp \sE$ and $f^\sharp \sF$ are locally free.
Then $\cc1\sE \cdot \cc1\sF \in \Hh2n-4.X.\R.$ is defined by setting $\cc1\sE \cdot \cc1\sF \cdot a \defn \cc1{f^\sharp \sE} \cdot \cc1{f^\sharp \sF} \cdot f^*(a)$.
\end{rem}

\begin{rem-plain}
While the above definitions are sufficient for our purposes, they turn out to be rather moot for general coherent sheaves.
In particular, they do not behave nicely in short exact sequences.
For example, the Chern classes of a torsion sheaf would obviously all vanish.
\end{rem-plain}

In the case where $X$ is klt and $\sE = \T X$ is the tangent sheaf, there is another reasonable way to define the second Chern class.

\begin{defi}[Second Chern class of $X$] \label{chern X}
Assume that $X$ has klt singularities, and let $f \from Y \to X$ be a resolution which is minimal in codimension two. For any class $a \in \HH2n-4.X.\R.$, we set
\[ \cc2X \cdot a \defn \cc2Y \cdot f^*(a) \in \HH2n.Y.\R. = \R. \]
\end{defi}

\noindent
For existence of such a resolution and well-definedness of $\cc2X$, see~\cite[Proposition~5.3]{GK20}.
Again, we have $\cc2X \in \HH2n-4.X.\R.\dual = \Hh2n-4.X.\R.$.

\begin{rem-plain}
The klt assumption in \cref{chern X} can be weakened.
In fact, the proof of independence of $Y$ does not use it, and hence the only question is whether a resolution minimal in codimension two actually exists.
This is true e.g.~whenever $X$ has klt singularities in codimension two (equivalently, quotient singularities in codimension two) and thus in particular if $X$ is smooth in codimension two. 
\end{rem-plain}

The next result, together with \cref{exa c2}, clarifies the relationship between Definitions~\lref{chern E} and \lref{chern X}.

\begin{prop}[Compatibility, I] \label{comp II}
If $X$ has klt singularities and is smooth in codimension two, i.e.~$\codim X{\sing X} \geq 3$, then we have $\cc2X = \cc2{\T X}$ as elements of $\Hh2n-4.X.\R.$.
\end{prop}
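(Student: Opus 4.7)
The plan is to pick a common resolution adapted to both \cref{chern E} and \cref{chern X}, and to verify directly on $Y$ that the two candidate values of $c_2$ agree against pullbacks from $X$ by a Borel--Moore dimension count.

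Concretely, I would choose $f \colon Y \to X$ to be a resolution which is an isomorphism over $\Reg X$ (automatically minimal in codimension two because $\codim X{\sing X} \geq 3$) and such that $f^\sharp \T X$ is locally free, using further blow-ups contained in $f^{-1}(\sing X)$ along with \cite[Theorem~3.5]{Rossi68}. For such an $f$ and any $a \in \HH2n-4.X.\R.$, Definitions~\lref{chern E} and~\lref{chern X} read
\[
\cc2{\T X} \cdot a = \cc2{f^\sharp \T X} \cdot f^*(a), \qquad \cc2X \cdot a = \cc2Y \cdot f^*(a),
\]
so the proposition reduces to the assertion that $\delta := \cc2Y - \cc2{f^\sharp \T X} \in \HH4.Y.\R.$ satisfies $\int_Y \delta \cup f^*(a) = 0$ for every such $a$.

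Setting $E := f^{-1}(\sing X)$, the next observation is that $\delta$ is supported on $E$. Indeed, outside $E$ the map $f$ is a biholomorphism onto $\Reg X$, and $f^* \T X$ is already locally free there, so $\T Y$ and $f^\sharp \T X$ become canonically isomorphic on $Y \setminus E$. Equivalently, Poincaré duality on the smooth manifold $Y$ promotes $\delta$ to a class $\delta \cap [Y] \in H^{\mathrm{BM}}_{2n-4}(Y, \R)$ lying in the image of $H^{\mathrm{BM}}_{2n-4}(E, \R) \to H^{\mathrm{BM}}_{2n-4}(Y, \R)$.

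The last step is a pushforward/dimension count. The projection formula for the proper map $f$ gives
\[
\int_Y \delta \cup f^*(a) \;=\; \int_X f_*\bigl(\delta \cap [Y]\bigr) \cap a,
\]
with $f_*(\delta \cap [Y]) \in H^{\mathrm{BM}}_{2n-4}(\sing X, \R)$ since $f(E) \subset \sing X$. The codimension-two smoothness assumption gives $\dim_\R \sing X \leq 2n - 6$, so this Borel--Moore group vanishes identically, the pushforward class is zero, and the integral above is~$0$. I expect the only subtle point to be the bookkeeping between cohomology with supports and Borel--Moore homology in the analytic setting; the geometric heart of the argument is the elementary two-degree gap $(2n-4) - 2\dim_\C \sing X \geq 2$ provided by the hypothesis $\codim X{\sing X} \geq 3$.
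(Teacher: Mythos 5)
Your argument is correct and is essentially the paper's own proof: the paper likewise reduces to a Rossi resolution that is an isomorphism over $\Reg X$ (hence automatically minimal in codimension two), shows that $\cc2Y$ and $\cc2{f^\sharp \T X}$ differ by a class supported on $\Exc(f)$ --- this is exactly its \cref{637}, established by the fundamental-class/duality bookkeeping you defer --- and then kills the difference by the same dimension count on $\sing X$. The only cosmetic difference is that you push $\delta \cap [Y]$ forward into $\Hh2n-4.\sing X.\R. = 0$, whereas the paper restricts $a$ to $\Exc(f)$ and uses $\HH2n-4.\sing X.\R. = 0$; these are dual formulations of the same two-degree gap.
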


\begin{proof}
Let $f \from Y \to X$ be a resolution such that $f^\sharp \T X$ is locally free.
It is clear from the construction in~\cite{Rossi68} that $f$ can be chosen to be an isomorphism over $\Reg X$, since there $\T X$ is already locally free.
As $X$ is assumed to be smooth in codimension two, such a resolution~$f$ will automatically be minimal in codimension two.
So it is sufficient to show that
\[ \cc2{f^\sharp \T X} \cdot f^* a = \cc2Y \cdot f^* a \]
for all $a \in \HH2n-4.X.\R.$.
This follows from \cref{637} below applied with $Z = \Exc(f)$, once we know that $(f^* a)\big|_Z = 0$.
But $(f^* a)\big|_Z = f^* \big( a|_{f(Z)} \big) = 0$ because $f(Z) = \sing X$ and $\HH2n-4.\sing X.\R. = 0$ by dimension reasons (the real dimension is at most $2n - 6$).
\end{proof}

Going back to the original setup (where $X$ is only assumed to be normal), recall that the determinant of the rank $r$ torsion-free sheaf $\sE$ is defined as $\det \sE \defn \big( \bigwedge^r \sE \big)\ddual$, where $(-)\ddual$ denotes the reflexive hull (= double dual).
By definition, this is a rank~one reflexive sheaf.
We are interested in situations where it is actually \Q-Cartier.
If this is the case, we may as usual consider $\cc1{\det \sE} \in \HH2.X.\R.$, and more generally $\cpc1k{\det \sE} \in \HH2k.X.\R.$.
By abuse of notation, we will also consider
\[ \cpc1k{\det \sE} \in \Hh2n-2k.X.\R. \]
via the natural map $\HH2k.X.\R. \to \HH2n-2k.X.\R.\dual = \Hh2n-2k.X.\R.$ given by the cup product pairing (or equivalently, by cap product with the fundamental class of $X$).
We compare this notion to \cref{chern E}:

\begin{prop}[Compatibility, II] \label{comp I}
Assume that $X$ is normal and smooth in codimension $k \geq 1$, that $\sE$ is locally free in codimension $k$, and that $\det \sE$ is \Q-Cartier.
Then we have
\[ \cpc1\ell\sE = \cpc1\ell{\det \sE} \qquad \text{for any $\ell \leq k$} \]
as elements of $\Hh2n-2\ell.X.\R.$.
In particular, if $X$ is smooth in codimension two, $\sE$ is reflexive, and $\det \sE \isom \O X$, then $\cpc12\sE = 0$.
\end{prop}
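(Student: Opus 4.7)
The plan is to pick a resolution $f \from Y \to X$ such that $f^\sharp \sE$ is locally free, arranged (as in the proof of \cref{comp II}) to be an isomorphism over the open locus $U \defn \Reg X \cap \{x : \sE_x \text{ is free}\}$. By hypothesis, $X \setminus U$ has complex codimension at least $k+1$. On the preimage $V \defn f^{-1}(U)$, the sheaf $f^\sharp \sE$ identifies with $\sE|_U$, and hence the two line bundles $L \defn \det(f^\sharp \sE)$ and $M \defn f^*\det \sE$ (the latter well defined as a $\Q$-line bundle since $\det \sE$ is $\Q$-Cartier) agree on $V$. Consequently, the difference
\[ \gamma \defn \cc1L - \cc1M \in \HH2.Y.\R. \]
vanishes on $V$.

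The next step is the observation that a ($\Q$-)line bundle on the smooth complex manifold $Y$ that is trivial over an open set $V \subset Y$ is, after passing to a positive integer power, of the form $\O Y(D)$ for some integral divisor $D = \sum_i a_i E_i$ with support in $E \defn Y \setminus V$. In particular, $\gamma$ is represented by a finite real combination $\sum_i a_i [E_i]$ of fundamental classes of irreducible components $E_i \subset E$, and each image $f(E_i) \subset X \setminus U$ has complex dimension at most $n - k - 1$. For $a \in \HH2n-2\ell.X.\R.$ with $\ell \leq k$, the inequality $2n - 2\ell > 2(n - k - 1)$ forces $a|_{f(E_i)} = 0$ for purely dimensional reasons, and hence $f^*(a)|_{E_i} = 0$.

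Expanding $\cc1L^\ell = \bigl( \cc1M + \gamma \bigr)^\ell$ binomially and pairing with $f^*(a)$, every error term contains a factor $\gamma^j$ with $j \geq 1$, which via the projection formula applied to some $[E_i]$ occurring in $\gamma$ reduces to a pairing featuring $f^*(a)|_{E_i} = 0$, and therefore vanishes. This yields $\cc1L^\ell \cdot f^*(a) = \cc1M^\ell \cdot f^*(a)$. The left-hand side equals $\cpc1\ell\sE \cdot a$ by \cref{comb chern} and the identity $\cc1{f^\sharp \sE}^\ell = \cc1L^\ell$ valid for the locally free sheaf $f^\sharp \sE$ on the smooth space $Y$. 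The right-hand side equals $\cpc1\ell{\det\sE} \cdot a$ by the projection formula for $f$ applied in top degree. This establishes the desired equality in $\Hh2n-2\ell.X.\R.$, and the ``in particular'' statement follows at once from $\cc1{\O X} = 0$.

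The core technical point, and what I expect to be the main obstacle, is the ``classes supported on $E$ vanish when cupped with $f^*(a)$'' step: one has to represent $\gamma$ by an $\R$-divisor supported in the exceptional locus and then carefully apply the projection formula to each irreducible component. This is precisely where the hypotheses enter, through the bound $\dim f(E_i) \leq n - k - 1$; the rest of the argument is routine manipulation of Chern classes of locally free sheaves together with the projection formula.
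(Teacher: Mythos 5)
Your overall strategy is the same as the paper's: pass to a resolution $f$ that is an isomorphism over the good locus $U$, compare $\det(f^\sharp\sE)$ with the pullback of a reflexive power of $\det\sE$, and kill the discrepancy by observing that $f^*(a)$ restricts to zero on the exceptional set for dimension reasons. The dimension count $2n-2\ell > 2(n-k-1)$ and the concluding binomial/projection-formula manipulations are fine. The gap sits exactly where you predicted: the assertion that a ($\Q$-)line bundle on the compact complex manifold $Y$ which is holomorphically trivial on $V=f^{-1}(U)$ is, after passing to a power, of the form $\O Y(D)$ with $D$ supported on $E=Y\setminus V$. In the algebraic category this holds because a trivializing section over a dense open set is a rational section of the bundle, whose divisor is then supported on $E$. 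But in this proposition $X$ is only a normal compact complex space, so $Y$ is only a compact complex manifold, and in the analytic category a nowhere-vanishing holomorphic section of $N|_V$ need not extend to a meromorphic section of $N$ across $E$ (essential singularities; more structurally, $\mathrm{Div}(Y)\to\Pic(Y)$ need not be surjective on a compact complex manifold). So the divisorial representation of $\gamma$ is not justified as stated.

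What is true, and what suffices, is the purely cohomological version: $\gamma$ restricts to zero in $\HH2.V.\R.$, hence lifts to $\HH2.Y.V.\R.$, which is identified with $\Hh2n-2.E.\R.$ and therefore pairs to zero against any product of classes one of whose factors restricts to zero on $E$. This is precisely the content of \cref{637} (stated there for arbitrary polynomials in Chern classes of two locally free sheaves agreeing off $Z$), which the paper's proof of \cref{comp I} invokes directly with $Z=\Exc(f)$, with no divisor, meromorphic section, or binomial expansion needed. Once you replace your line-bundle claim by that topological lemma, your argument is correct and essentially coincides with the paper's. Two smaller omissions: the ``in particular'' clause needs the observation that a reflexive sheaf is locally free in codimension two over $\Reg X$ (the paper cites \cite[Lemma~1.1.10]{OSS80}), and the identification of $\cpc1\ell{\det\sE}\cdot a$ computed on $X$ with $\frac1{m^\ell}\,\cpc1\ell{f^*\sL}\cdot f^*(a)$ computed on $Y$ uses that $f$ has degree one, which is worth recording.
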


\begin{proof}
Let $f \from Y \to X$ be a resolution such that $f^\sharp \sE$ is locally free.
As before, we may assume that $f$ is an isomorphism over the locus where both $X$ is smooth and $\sE$ is locally free.
Pick $m > 0$ such that $\sL \defn \big( (\det \sE)^{\tensor m} \big) \ddual$ is a line bundle.
Then we need to show that
\[ \cpc1\ell{\det f^\sharp \sE} \cdot f^* a = \frac1{m^\ell} \, \cpc1\ell{f^* \sL} \cdot f^* a \]
for all $a \in \HH2n-2\ell.X.\R.$.
This follows from \cref{637} applied with $Z = \Exc(f)$, since $(\det f^\sharp \sE)^{\tensor m}$ and $f^* \sL$ are isomorphic outside of $Z$ and $(f^* a)\big|_Z = 0$ as in the proof of \cref{comp II}.

For the second statement, it suffices to show that $\sE$ is locally free in codimension two.
After discarding $\sing X$, this follows from the fact that a reflexive sheaf on a \emph{smooth} space enjoys this property~\cite[Lemma~1.1.10]{OSS80}.
\end{proof}

\begin{exa} \label{exa c2}
Without assuming that $X$ is smooth in codimension two, the Chern classes $\cc2X$ and $\cc2{\T X}$ are in general different.
For instance let $f \from Y \to X$ be the minimal resolution of a Kummer surface $X = \factor A{\pm 1}$, where $A$ is a complex $2$-torus, with exceptional divisor $E$.
The surface $Y$ being K3, we have $\cc2X = \cc2Y = 24$.
On the other hand, we have $f^\sharp \T X = \T Y(-\log E)$ as shown by the computations below.
The dual of this sheaf sits inside the residue sequence
\[ 0 \lto \Omegap Y1 \lto \Omegap Y1(\log E) \lto \O E \lto 0 \]
and then quite generally, a Chern class computation shows
\[ \cc2{\Omegap Y1(\log E)} = \cc2Y + K_Y \cdot E + E^2. \]
In our situation, this means that
\[ \cc2{\T X} = \cc2{\T Y(-\log E)} = 24 + 0 - 32 = -8 \ne 24 = \cc2 X, \]
as $E$ is the disjoint union of sixteen $(-2)$-curves.
The same example also shows that \cref{comp I} fails if $\ell > k$.
Indeed, the sheaf $\sE = \T X$ has trivial determinant, hence $\cpc12{\det \T X} = 0$, but
\[ \cpc12{\T X} = \cpc12{\T Y(-\log E)} = \big( \! - \! (K_Y + E) \big) ^2 = E^2 = -32 \ne 0. \]

Let us finally justify the equality $f^\sharp \T X = \T Y(-\log E)$.
The claim being local, we can assume that $X = \factor{\C^2}{\pm1}$ and we denote by $\mu \from \wh Z \to Z$ the blow-up of the origin in $Z = \C^2$ with coordinates $u, v$.
The $\Z/2$-action lifts to $\wh Z$ and we get a commutative diagram:
\[\xymatrix{
\wh Z \ar[d]_-p \ar[rr]^-\mu   & & Z \ar[d]^-q \\
Y = \factor{\wh Z}{\pm1} \ar[rr]^-f & & X = \factor Z{\pm1}
} \]
In one chart of the blow-up, the map $\mu$ is given by $\mu(x, y) = (xy, y)$ with exceptional divisor $\wh E = \set{y = 0}$ and the $\Z/2$-action is given by $(-1) \cdot (x, y) = (x, -y)$.
In particular, in this chart, coordinates on $Y$ are given by $(s, t) = (x, y^2)$ and $E = \set{t = 0}$.

We now notice that the $2$-form $\omega \defn \dd u \wedge \dd v$ on $Z$ is $\Z/2$-invariant and hence descends to $X$, where it induces an isomorphism $\T X \isom \Omegar X1$ by contraction.
The same is true for $\mu^*(\omega)$: it descends to $\omega_Y$ a nowhere vanishing $2$-form on $Y$, namely $\dd s \wedge \dd t$ up to a nonzero constant.
Hence $\omega_Y$ induces an isomorphism $\T Y \isom \Omegap Y1$.
Since $\big( q_* \Omegar Z1 \big)^{\Z/2}$ is reflexive by~\cite[Lemma~A.4]{GKKP}, quasi-\'etalit\'e of $q$ implies that pullback of reflexive forms along $q$ induces an isomorphism $\Omegar X1 \bij \big( q_* \Omegar Z1 \big)^{\Z/2}$.
The latter sheaf is generated by the (images of the) $1$-forms $u \, \dd u, \, u \, \dd v, \, v \, \dd u$ and $v \, \dd v$; their pull-backs to $\wh Z$ are
{ \everymath={\displaystyle}
\[ \begin{array}{ll}
xy^2 \, \dd x + x^2y \, \dd y = p^* \! \left( st\, \dd s + \frac{s^2}2 \, \dd t \right), & xy \, \dd y = p^* \! \left( \frac s2 \, \dd t \right), \\[3ex]
y^2 \, \dd x + xy \, \dd y = p^* \! \left( t \, \dd s + \frac s2 \, \dd t \right), & y \, \dd y = p^* \! \left( \frac12 \, \dd t \right).
\end{array} \] }%
The sheaf $f^\sharp \Omegar X1 \subset \Omegap Y1$ is generated by the terms in brackets, and thus by the $1$-forms $\dd t$ and $t \, \dd s$.
Consider the following commutative diagram:
\[ \xymatrix{
f^* \T X \ar[rr] \ar[d]_-{\rotatebox{90}{$\sim$}}^-{\omega} & & \T Y \ar[d]_-{\rotatebox{90}{$\sim$}}^-{\omega_Y} \\
f^* \Omegar X1 \ar[rr] & & \Omegap Y1,
} \]
The vertical arrows are isomorphisms, given by contraction as explained above.
The upper horizontal arrow is obtained by adjunction from the map $\T X \bij f_* \T Y$~\cite[Proposition~1.2]{Wahl75}.
The lower horizontal arrow is obtained similarly from $\Omegar X1 \bij f_* \Omegap Y1$, cf.~e.g.~\cite[Theorem~4.2]{GKKP}.
The map $f^* \T X \to \T Y$ is generically injective (it is an isomorphism outside of~$E$).
So its kernel is the torsion subsheaf of $f^* \T X$, and consequently its image its nothing but $f^\sharp \T X$.
Similar remarks apply to the bottom map $f^* \Omegar X1 \to \Omegap Y1$.
In order to obtain generators of $f^\sharp \T X$, it is therefore sufficient to contract the generators of $f^\sharp \Omegar X1$ determined above by~$\omega_Y$.
Doing so yields that
\[ f^\sharp \T X = \left\langle \frac{\partial}{\partial s}, \, t \frac{\partial}{\partial t} \right\rangle = \T Y(-\log E) \]
in the given chart.
The proof is finished using similar computations in the second affine chart of $\wh Z$.
\end{exa}

\begin{rem-plain}
We have only used the following facts about the singularities of $X$:
they are quotient singularities, they are Gorenstein and they admit a crepant resolution.
It should hence be possible to extend the above argument e.g.~to arbitrary canonical surface singularities.
We do not need this level of generality here.
\end{rem-plain}

\begin{lem} \label{637}
Let $X$ be a compact complex manifold of dimension $n$, $Z \subset X$ an analytic subset and $U \defn X \setminus Z$.
Suppose two locally free coherent sheaves $\sF, \sG$ on $X$ are given such that there is an isomorphism $\sF\big|_U \isom \sG\big|_U$.
Then
\[ \cc i\sF \cdot a = \cc i\sG \cdot a \qquad \text{for any $a \in \HH2n-2i.X.\R.$ with $a\big|_Z = 0$.} \]
A similar statement holds for degree $i$ polynomials in the Chern classes.
\end{lem}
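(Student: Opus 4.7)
The plan is to reformulate both factors of the pairing in terms of relative cohomology and then exploit that $U$ and $Z$ together cover $X$, so that the relative cup product lands in $\mathrm H^{2n}(X,X;\R)=0$.

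First I would note that, since $\sF|_U \isom \sG|_U$, the restrictions of $\cc i \sF$ and $\cc i \sG$ to $U$ agree in $\HH2i.U.\R.$. The long exact sequence of the pair $(X,U)$ then produces a lift $\delta\in\mathrm H^{2i}(X,U;\R)$ of $\cc i\sF-\cc i\sG$. Symmetrically, the assumption $a|_Z=0$ combined with the long exact sequence of $(X,Z)$ yields a lift $\widetilde a\in\mathrm H^{2n-2i}(X,Z;\R)$ of $a$. One small technical adjustment: the standard relative cup product is formulated for \emph{open} subsets, so I would replace $Z$ by an open neighborhood $V\supseteq Z$ that deformation retracts onto $Z$; such a $V$ exists because analytic subsets of complex manifolds admit regular neighborhoods (this is the one step that I expect to be the main, though still routine, obstacle of the argument, and is why I emphasise it here).

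Next, I would feed $\delta$ and $\widetilde a$ into the relative cup product
\[ \mathrm H^{2i}(X,U;\R)\tensor\mathrm H^{2n-2i}(X,V;\R)\lto\mathrm H^{2n}(X,U\cup V;\R). \]
Because $U\cup V=X$, the target is $\mathrm H^{2n}(X,X;\R)=0$, so $\delta\cup\widetilde a=0$. Projecting this identity via the natural map to $\HH2n.X.\R.=\R$ gives exactly $\bigl(\cc i\sF-\cc i\sG\bigr)\cdot a=0$, which is the desired equality.

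Finally, the extension to a weighted homogeneous polynomial $P$ of degree $i$ in the Chern classes is immediate: since $\sF$ and $\sG$ are isomorphic over $U$, one has $P(\cc *\sF)|_U=P(\cc *\sG)|_U$, so $P(\cc *\sF)-P(\cc *\sG)$ again lifts to $\mathrm H^{2i}(X,U;\R)$, and the remainder of the argument is unchanged. Everything else in the proof is purely formal manipulation of the long exact sequences and the cup product; the only point where one must be a little careful is the neighborhood reduction for the possibly singular analytic set $Z$.
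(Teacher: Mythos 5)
Your argument is correct, but it takes a genuinely different route from the paper's. Both proofs begin identically: since $\sF|_U \isom \sG|_U$, the difference $c \defn \cc i\sF - \cc i\sG$ lifts along $\HH2i.X.U. \to \HHs2i.X.$, and both need an open neighborhood of $Z$ deformation retracting onto $Z$ --- the paper gets this from \L{}ojasiewicz's triangulation theorem for the pair $(X,Z)$, which disposes of the one technical point you flag. From there the paths diverge. You stay entirely in cohomology: you also lift $a$ to $\HH2n-2i.X.V.$ (using that $a|_V$ corresponds to $a|_Z = 0$ under the retraction) and feed both lifts into the relative cup product landing in $\HH2n.X.U \cup V. = \HH2n.X.X. = 0$, so naturality kills $c \cdot a$. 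The paper instead passes to homology: it caps the lift of $c$ with the fundamental class $[X]_Z$ to write $c \cap [X] = (\iota_Z)_* \sigma$ for some $\sigma \in \Hhs2n-2i.Z.$, and then evaluates $\la (\iota_Z)_*\sigma, a \ra = \la \sigma, a|_Z \ra = 0$. Your version is more economical --- no fundamental classes, no excision/projection-formula bookkeeping, no commutative diagram --- at the cost of invoking the relative cup product for an excisive pair of open sets. The paper's version makes explicit the geometric content that $c$ is Poincar\'e dual to a cycle supported on $Z$. Your remark on the polynomial case is also correct and matches what the paper (silently) does.
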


\begin{proof}
Pick a sufficiently small neighborhood $Z \subset W \subset X$ which deformation retracts onto $Z$ via a map $r \from W \to Z$.
This exists because we can find a triangulation of $X$ such that $Z$ is a subcomplex~\cite[Theorems~2 and~3]{Lojasiewicz64}.
Set $W^\times \defn W \setminus Z$, and note that the inclusion $j \from (W, W^\times) \inj (X, U)$ satisfies excision.
We define a map $\varphi \from \HH2i.X.U. \to \Hhs2n-2i.Z.$ as follows (everything is with real coefficients):
\[ \HH2i.X.U. \xrightarrow{\quad j^* \quad} \HH2i.W.W^\times. \xrightarrow{\;\;\cap [W]_Z\;\;} \Hhs2n-2i.W. \xrightarrow{\quad r_* \quad} \Hhs2n-2i.Z.. \]
The first map is an isomorphism by excision.
The second map is cap product with the fundamental class\footnote{%
Recall that for an oriented $n$-manifold $M$ and a compact subset $K \subset M$, the fundamental class $[M]_K$ is the unique element of $\Hh n.M.M \setminus K.$ which at each point $x \in K$ induces the local orientation in $\Hh n.M.M \setminus \set x.$ given by the orientation of $M$ (here we are using integer coefficients).
In case $K = M$ is compact, this reduces to the usual notion.
Cf.~\cite[Lemma~3.27]{Hatcher02} for proofs.}
$[W]_Z \in \Hh2n.W.W^\times.$.
The third map $r_*$ is an isomorphism because $r$ is a deformation retraction.
(The map $\varphi$ itself is also an isomorphism, but we do not need this.
The reason is that the dual map $\varphi\dual \from \HHs2n-2i.Z. \to \Hh2i.X.U.$ is an isomorphism by~\cite[Proposition~3.46]{Hatcher02}.)

We now have a commutative diagram
\[ \xymatrix{
\HH2i.X.U. \ar^-{p^*}[rr] \ar_-\varphi[d] & & \HHs2i.X. \ar^-{\iota_U^*}[rr] \ar^-{\cap [X]}[d] & & \HHs2i.U. \\
\Hhs2n-2i.Z. \ar^-{(\iota_Z)_*}[rr] & & \Hhs2n-2i.X.
} \]
where the upper row is the long exact sequence of relative cohomology.
To see commutativity, first note that under the isomorphism $\Hh2n.W.W^\times. \isom \Hh2n.X.U.$, the fundamental classes $[W]_Z$ and $[X]_Z$ correspond to each other.
Now pick an arbitrary class $\alpha \in \HH2i.X.U.$ and calculate
\begin{align*}
(\iota_Z)_* \varphi(\alpha) & = (\iota_Z)_* r_* \big( j^* \alpha \cap [W]_Z \big) && \text{by definition} \\
& = j_* \big( j^* \alpha \cap [W]_Z \big) && \text{$r$ is a deformation retraction} \\
& = \alpha \cap j_* [W]_Z && \text{projection formula} \\
& = \alpha \cap [X]_Z && \text{remark above} \\
& = \alpha \cap p_* [X] && \text{where $p_* \from \Hhs2n.X. \to \Hh2n.X.U.$} \\
& = p^* \alpha \cap [X] && \text{by definition.}
\end{align*}

Let us now return to the original setting.
The assumption on $\sF$ and $\sG$ clearly implies that $\iota_U^*(\cc i\sF) = \iota_U^*(\cc i\sG)$, hence the difference $c \defn \cc i\sF - \cc i\sG$ lies in the image of $\HH2i.X.U.$.
By the above diagram, $c \cap [X] = (\iota_Z)_* \sigma$ for some $\sigma \in \Hhs2n-2i.Z.$.
The claim now follows easily:
\[ c \cdot a = \big\la c \cap [X], a \big\ra = \big\la (\iota_Z)_* \sigma, a \big\ra = \big\la \sigma, \iota_Z^* \, a \big\ra = 0, \]
since $\iota_Z^* \, a = a\big|_Z = 0$ by assumption.
Here $\la -, - \ra$ denotes the natural pairing between homology and cohomology.
The proof for arbitrary polynomials in the Chern classes is the same, and hence is omitted.
\end{proof}

\begin{rem-plain}
In \cref{637}, we do \emph{not} require the existence of a global map $\sF \to \sG$ whose restriction to $U$ is an isomorphism.
This greatly simplifies the proof of \cref{comp II}.
Indeed, as the sheaf of \kahler differentials $\Omegap X1$ is not locally free, from the natural map $f^* \Omegap X1 \to \Omegap Y1$ we cannot by dualizing deduce a map $\T Y \to f^* \T X$.
If we choose~$f$ to be the \emph{functorial} resolution, there is indeed a map $f^* \T X \to \T Y$, cf.~e.g.~\cite[Fact~2.2]{LowGenusLZ} and the discussion thereafter.
But then we do not know whether $f^\sharp \T X$ is always locally free.
\end{rem-plain}

\subsection*{Slopes and stability}

If $\alpha \in \HH2.X.\R.$ is a \kahler (or merely big and nef) class on $X$, one can define the \emph{slope} (with respect to $\alpha$) of a nonzero torsion-free sheaf $\sE$ as
\[ \mu_\alpha(\sE) \defn \frac1{\rk \sE} \, \cc1\sE \cdot \alpha^{n-1}. \]
In case $\det \sE$ is a \Q-line bundle (i.e.~when there exists $N \in \N^+$ such that $(\det \sE)^{[N]}$ is locally free), one recovers the usual definition of the slope (cf.~e.g.~\cite{GKP14}) thanks to \cref{comp I}.
We say that $\sE$ is \emph{(slope) stable} with respect to $\alpha$ if for any nonzero coherent subsheaf $\sF \subset \sE$ of rank strictly less than $\rk \sE$, we have $\mu_\alpha(\sF) < \mu_\alpha(\sE)$.

As before, let $f \from Y \to X$ be a resolution such that $f^\sharp \sE$ is locally free.
Given a subsheaf $\sF \subset \sE$ of rank $s > 0$, $f^\sharp \sF$ will in general not be locally free, but its determinant is a line bundle since $Y$ is smooth.
Thanks to the observations above, we can still compute the slope of $\sF$ as $\mu_\alpha(\sF) = \frac1s \, \cc1{f^\sharp \sF} \cdot f^*\alpha^{n-1}$.

\section{The Bogomolov--Gieseker inequality} \label{sec stability}

In this section, we prove \cref{BG ineq} and give an application to varieties with vanishing first Chern class (\cref{positivity c2}).

\begin{setup} \label{setup stability}
Let $X$ be a normal compact \kahler space of dimension $n$, let $\alpha\in \HH2.X.\R.$ be a \kahler class represented by a \kahler metric $\omega$.
Recall that by definition, this means that $\om$ is a genuine \kahler metric on $\Reg X$ such that for some (or equivalently any) covering $X = \bigcup_{\alpha \in A} U_\alpha$ by open sets admitting an embedding $j_\alpha \from U_\alpha \inj \C^N$ for some integer $N$, the smooth form $(j_{\alpha})_* \big( \omega|_{U_{\alpha}^{\rm reg}} \big)$ is the restriction of a \kahler metric defined on an open neighborhood of $j_{\alpha}(U_{\alpha}) \subset \C^N$.
Moreover, any such metric induces a cohomology class $[\omega]\in \HH2.X.\R.$, cf.~e.g.~\cite[\S~3.A]{GK20}.

\noindent
Next, let $\sE$ be a reflexive coherent sheaf of rank $r$ on $X$. 
In particular, the locus
\[ Z \defn X_{\rm sing} \cup \{ x\in X; \,\,\sE \mbox{ is not locally free near } x\} \]
satisfies $\codim XZ \ge 2$; one sets $X^\circ \defn X \setminus Z$.
Once and for all, we fix a log resolution $f \from \wX \to X$ of $(X, Z)$ such that $\wE \defn f^\sharp \sE = \factor{f^*\sE}{\tor f^* \sE}$ is locally free.
The exceptional divisor of $f$ is denoted by $F = \sum F_i$.
Finally, we set
\[ \Delta(\sE) \cdot \alpha^{n-2} \defn \left( 2r \, \cc2\sE - (r - 1) \, \cpc12\sE \right) \cdot \alpha^{n-2} \]
to be the Bogomolov--Gieseker discriminant of $\sE$ against the class $\alpha^{n-2} \in \HH2n-4.X.\R.$.
Let us emphasize that in this section, we do not assume that $X$ is smooth in codimension two, unless explicitly mentioned otherwise.
\end{setup}

\subsection{Openness of stability}

The goal of this subsection is to prove an openness-type property for stable sheaves on a resolution of singularities, cf.~\cref{open}.
This was previously observed in the smooth case in~\cite[Proposition~2.1]{Cao13}.
As an immediate consequence, we obtain~\lref{B.1} from the introduction.

\begin{lem} \label{elementary}
In \cref{setup stability} above, $\sE$ is stable with respect to $\alpha$ if and only if $\wE$ is stable with respect to $f^*\alpha$. 
\end{lem}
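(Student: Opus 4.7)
The plan is to reduce stability on $X$ to stability on $\wX$ via the slope identity
\[
\mu_\alpha(\sF) \;=\; \mu_{f^*\alpha}(f^\sharp \sF),
\]
valid for every torsion-free subsheaf $\sF\subset\sE$. This follows directly from \cref{chern E} applied with $i=1$ and $a=\alpha^{n-1}$, together with $f^*(\alpha^{n-1})=(f^*\alpha)^{n-1}$. Specializing to $\sF=\sE$ gives $\mu_\alpha(\sE)=\mu_{f^*\alpha}(\wE)$, so only the comparison of proper subsheaves remains.

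For the implication ``$\wE$ stable $\Rightarrow$ $\sE$ stable'', I would argue as follows: given $\sF\subset\sE$ with $0<\rk\sF<r$, the sheaf $f^\sharp\sF$ is torsion-free by definition, and the natural map $f^\sharp\sF\to f^\sharp\sE=\wE$ is generically injective (it is an isomorphism over the open locus where $f$ is an isomorphism and $\sE$ is locally free) into a torsion-free sheaf, hence injective. Stability of $\wE$ combined with the slope identity then yields the required strict inequality for $\sF$.

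For the converse, let $\sG\subset\wE$ be a subsheaf of rank $0<s<r$, which I may assume saturated in $\wE$ (this only enlarges the slope). I would choose the log resolution $f$ to be an isomorphism over the big open set $X^\circ\defn X\setminus Z$, which is permissible because $\sE$ is locally free there. Via this isomorphism, $\sG|_{f\inv(X^\circ)}$ corresponds to a saturated subsheaf $\sG^\circ\subset\sE|_{X^\circ}$, and I would define $\sF\subset\sE$ as the unique coherent reflexive extension of $\sG^\circ$, namely $\sF=j_*\sG^\circ\cap\sE$ inside $\sE=j_*(\sE|_{X^\circ})$, where $j\colon X^\circ\hookrightarrow X$; existence and coherence rest on reflexivity of $\sE$ together with the codimension bound $\codim XZ\ge 2$. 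By construction $\sF$ has rank $s$ and agrees with $\sG$ on $f\inv(X^\circ)$, so the slope identity reduces matters to comparing $\mu_{f^*\alpha}(f^\sharp\sF)$ with $\mu_{f^*\alpha}(\sG)$.

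The main obstacle, and the only nontrivial point, will be this last comparison: the line bundles $\det f^\sharp\sF$ and $\det\sG$ agree on $f\inv(X^\circ)$ but may differ in codimension one on $\wX$ by a divisor supported on the exceptional locus $F=\sum F_i$. However, by the projection formula $\cc1(\O{\wX}(F_i))\cdot(f^*\alpha)^{n-1}$ equals the intersection of $f_*[F_i]$ with $\alpha^{n-1}$ on $X$; since $f(F_i)\subset Z$ has codimension at least two in $X$, the pushforward $f_*[F_i]$ vanishes and the discrepancy is invisible to $(f^*\alpha)^{n-1}$. This will force $\mu_{f^*\alpha}(f^\sharp\sF)=\mu_{f^*\alpha}(\sG)$, so that the assumed stability of $\sE$ contradicts the existence of the destabilizer $\sG$.
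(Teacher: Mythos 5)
Your proposal is correct and follows essentially the same route as the paper: both directions transport subsheaves between $\sE$ and $\wE$ (your $j_*\sG^\circ\cap\sE$ is the paper's $(f_*\widehat\sG)^{**}$, and your $f^\sharp\sF\hookrightarrow\wE$ is the paper's image of $f^*\sF\to\wE$), and both rest on the observation that the resulting first Chern class discrepancies are supported on the exceptional divisors $F_i$, which pair to zero with $(f^*\alpha)^{n-1}$ since $f(F_i)$ has codimension at least two. Your write-up merely makes this last projection-formula step more explicit than the paper does.
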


\begin{proof}

Assume that $\sE$ is stable and let $\widehat {\mathscr G}\subset \wE$ be a proper subsheaf.
Note that by reflexivity of $\sE$, the natural morphism $\sE\to f_* f^* \sE $ induces an isomorphism $(f_*f^*\sE)^{**} \isom \sE$.
Similarly, taking the direct image of the canonical surjection $f^*\sE\to \wE$ and dualizing induces an isomorphism $(f_*\wE)^{**} \isom \sE$.
Now the subsheaf $\sG \defn (f_*\widehat \sG)^{**} \subset (f_*\wE)^{**} \isom \sE$ is such that $\cc1{f^\sharp \sG} - \cc1{\widehat \sG}$ is a linear combination of the $F_i$, and therefore
\begin{equation*}
\mu_{f^*\alpha}(\widehat \sG)=\mu_{\alpha}(\sG)<\mu_{\alpha}(\sE)=\mu_{f^*\alpha}(\wE).
\end{equation*}
In the other direction, assume that $\wE$ is stable with respect to $f^*\alpha$ and let $\sF\subset \sE$ be a proper subsheaf.
The image of $f^* \sF \to f^\sharp \sE$ yields a subsheaf $\wF$ of $\wE$ which coincides with $f^*\sF$ away from $F$.
In particular, $\cc1{f^\sharp \sF} - \cc1\wF$ is supported on $F$, hence
\[\mu_{\alpha}(\sF)=\mu_{f^*\alpha}(\wF)<\mu_{f^*\alpha}(\wE)=\mu_{\alpha}(\sE)\]
and the lemma follows.
\end{proof}

\begin{lem} \label{stable}
In \cref{setup stability} above, assume that $\sE$ is stable with respect to $\alpha$. Let $\beta$ be a \kahler class on $\wX$. Then, 
\begin{enumerate}
\item The sheaf $\wE$ is stable with respect to $f^*\alpha$. 
\item \label{bounded} There exists a constant $C>0$ such that for any subsheaf $\wF\subset \wE$ and any integer $k\in [0,n-1]$, one has 
$$\cc1\wF \cdot (f^*\alpha)^{k} \cdot \beta^{n-1-k} \le C.$$
\item \label{max} There exists $\delta>0$ such that for every subsheaf $\wF\subset \wE$ with $\rk(\wF)<\rk(\wE)$, we have
$$\mu_{f^*\alpha}(\wF)\le \mu_{f^*\alpha}(\wE)-\delta.$$
\end{enumerate}
\end{lem}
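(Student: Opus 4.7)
Statement (1) is exactly the ``if'' direction of Lemma \ref{elementary}, with nothing to add.

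For statement (2), I first reduce to the case where $\wF$ is saturated in $\wE$. If $\wF^{\rm sat}$ denotes the saturation, then the quotient $\wF^{\rm sat}/\wF$ is a torsion sheaf whose codimension-one support is an effective divisor, so $\cc1\wF^{\rm sat}-\cc1\wF$ is an effective divisor class. The class $(f^*\alpha)^k\cdot\beta^{n-1-k}$ is pseudoeffective since it is a limit as $\eta\searrow 0$ of products of the Kähler classes $f^*\alpha+\eta\beta$ and $\beta$, each represented by a smooth positive form. Pairing an effective divisor against such a limit is nonnegative, whence
\[\cc1\wF\cdot(f^*\alpha)^k\cdot\beta^{n-1-k}\le\cc1\wF^{\rm sat}\cdot(f^*\alpha)^k\cdot\beta^{n-1-k}.\]
It thus suffices to bound the right-hand side uniformly. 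The saturated subsheaves of $\wE$ of a given rank $s\in\{1,\ldots,r\}$ form a bounded family on the smooth compact Kähler manifold $\wX$: this is the Kähler analogue of Grothendieck's boundedness of Quot schemes, obtained from the compactness of the Douady space and the existence of Harder--Narasimhan filtrations on compact Kähler manifolds. Consequently the first Chern classes $\cc1\wF^{\rm sat}$ lie in a bounded subset of $H^2(\wX,\R)$, and taking the supremum over the finitely many pairs $(s,k)$ yields the desired constant $C$.

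For statement (3), I argue by contradiction. Suppose the uniform gap fails: then there is a sequence of saturated subsheaves $\wF_n\subset\wE$ with $0<\rk\wF_n<r$ and $\mu_{f^*\alpha}(\wF_n)\to\mu_{f^*\alpha}(\wE)$. By the pigeonhole principle, the ranks may be assumed constant, equal to some $s<r$. Part (2) confines the $\wF_n$ to a bounded family, so compactness of the Douady space produces (after passing to a subsequence) a limit $\wF_\infty\subset\wE$ of rank $s$ whose first Chern class is the limit of $\cc1\wF_n$, up to an effective correction arising from possible codimension-one degenerations. Upper semicontinuity gives $\mu_{f^*\alpha}(\wF_\infty)\ge\mu_{f^*\alpha}(\wE)$, directly contradicting the $f^*\alpha$-stability of $\wE$ established in~(1).

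\textbf{Main obstacle.} The technical heart of the argument is the boundedness statement used in (2): saturated subsheaves of $\wE$ of bounded rank form a bounded family on the Kähler manifold $\wX$, with first Chern classes confined to a bounded subset of $H^2(\wX,\R)$. In the projective case this is the classical theory of Quot schemes, but in the Kähler setting one must invoke Douady spaces together with Toma's work on Harder--Narasimhan filtrations on compact Kähler manifolds, and be careful in relating $\beta$-slope bounds to boundedness of the underlying family.
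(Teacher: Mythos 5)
Your item (1) and the general architecture are fine, but the key step in your item (2) is not correct, and item (3) inherits the problem. You claim that the saturated subsheaves of $\wE$ of a fixed rank $s$ form a bounded family and deduce that their first Chern classes lie in a bounded subset of $\HH2.\wX.\R.$. This is false: already for $\wE = \O{\PP^1}^{\oplus 2}$ the saturated rank-one subsheaves realize every degree $-d \le 0$, so the set of classes $\cc1{\wF^{\rm sat}}$ is unbounded. What Grothendieck's lemma (and its K\"ahler analogue via Douady spaces and Toma's work) actually gives is boundedness of the family of saturated subsheaves \emph{whose slope is bounded below}; without that lower bound there is no boundedness, and the uniform \emph{upper} bound on $\cc1\wF \cdot (f^*\alpha)^{k} \cdot \beta^{n-1-k}$ is precisely the content of item (2) --- so your argument is circular at this point. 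The paper proves (2) by the classical differential-geometric route of \cite[Chapter~V, Lemma~7.16]{Koba}: after saturating, $\wF$ corresponds to a subbundle $\widehat F \subset \widehat E$ on a big open set, the Gauss--Griffiths curvature inequality bounds $i\Theta(\widehat F, h_{\widehat F})$ by the projection of the curvature of a \emph{fixed} metric on $\widehat E$, and taking the trace, wedging with $f^*\omega^{k} \wedge \widehat\omega^{n-1-k}$ and integrating yields a bound independent of $\wF$.

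For (3), the paper does not extract a limit of subsheaves on $\wX$; it transfers the question to subsheaves of $\sE$ on $X$ via the proof of \cref{elementary} and invokes Toma's finiteness of the components of the Douady space of quotients of $\sE$ of slope at most $\mu_\alpha(\sE)+1$, which makes the set of attainable slopes near $\mu_\alpha(\sE)$ finite and hence produces the uniform gap $\delta$. Your compactness argument would need a two-sided slope bound with respect to a genuine K\"ahler class on $\wX$ to keep the sequence $\wF_n$ in a compact part of the Douady space; a bound on $\mu_{f^*\alpha}$ alone (with $f^*\alpha$ only nef and big on $\wX$) does not provide this, and the lower bound on $\beta$-slopes is not available. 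If you want to salvage your route, you must first establish the upper bound of (2) by an independent argument (curvature, or a Grothendieck-lemma argument applied to the K\"ahler classes $f^*\alpha + \eta\beta$ with uniformity in $\eta$) and only then restrict attention to destabilizing subsheaves, whose slopes are bounded on both sides.
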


\begin{proof}
The first item is an immediate consequence of \cref{elementary}. 

The second item can be proved in a similar way as the case of a single \kahler polarization, see e.g.~\cite[Chapter~V, Lemma~7.16]{Koba}. Let us provide the main ideas. First of all, since saturation increases the slope, one can assume that $\wF$ is saturated in $\wE$. In particular, $\wF\subset \wE$ corresponds to an inclusion of vector bundles $\widehat F \subset \widehat E$ on a big open set $U\subset \wX$, i.e. $\wX\setminus U$ has codimension at least two.

Next, we pick a \kahler metric $\widehat \omega\in \beta$ and a smooth hermitian metric $h_{\widehat E}$ on $\widehat E$; it induces a hermitian metric $h_{\widehat F}$ on $\widehat F|_U$ and their respective Chern curvature forms satisfy
\begin{equation}
\label{griffiths}
i\Theta(\widehat F,h_{\widehat F}) \le \mathrm{pr}_{\widehat F} i\Theta(\widehat E,h_{\widehat E})|_{\widehat F} \qquad \mbox{on } U,
\end{equation}
where $\mathrm{pr}_{\widehat F}$ is the orthogonal projection onto $\widehat F$ with respect to $h_{\widehat E}$. 
Taking the (endomorphism) trace of \eqref{griffiths} and wedging with $ f^*\omega^{k} \wedge \widehat \omega^{n-1-k}$, we get 
\begin{equation}
\label{ineq CC}
\cc{1}{\widehat F,h_{\widehat F}}\wedge f^*\omega^{k} \wedge \widehat \omega^{n-1-k} \le C \|i\Theta(\widehat E,h_{\widehat E})\|_{h_{\widehat E}, \widehat \omega} \cdot \widehat \omega^n \qquad \mbox{on } U,
\end{equation}
where $C$ depends only on $\mathrm{tr}_{\widehat \omega}(f^*\omega)$. The right-hand side does not depend on $\wF $ anymore; hence its integral over $U$ (or equivalently over $\wX$) is bounded independently of $\wF$. 

Finally, using a log resolution of $(\wX,\wX\setminus U)$, one can compute the integral of the left-hand side of \eqref{ineq CC} over $U$ and see that it is convergent, and coincides with $\cc{1}{\wF}\cdot (f^*\alpha)^{k} \cdot \beta^{n-1-k}$ since the codimension of $\wX\setminus U$ is at least two, cf.~\cite[eq.~($\ast\ast$) on p.~181]{Koba}.
This proves the claim in the second item. 

The last item is a consequence of the proof of \cref{elementary} and the fact that the statement is true for subsheaves $\sF$ of $\sE$ with rank strictly less than $\mathrm{rank}(\sE)$. As for the latter fact, it derives e.g.~from the finiteness of components of the Douady space of quotients of $\sE$ with slope at most $\mu_{\alpha}(\sE)+1$, cf.~e.g.~\cite[Corollary~6.3]{Toma} applied to $S=\mathrm{pt}$.
\end{proof}

\begin{prop}[Bogomolov--Gieseker inequality] \label{open}
In \cref{setup stability} above, assume that $\sE$ is stable with respect to $\alpha$. Then, there exists $\ep_0>0$ such that the sheaf $\wE$ is stable with respect to $f^*\alpha+\ep\beta$ for any $0\le \ep\le \ep_0$. 
In particular, one has the Bogomolov--Gieseker inequality
\[ \Delta(\sE)\cdot \alpha^{n-2}= \left(2r\cc2\sE-(r-1)\cpc12\sE\right)\cdot \alpha^{n-2} \ge 0. \]
\end{prop}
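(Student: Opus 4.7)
The plan is to deform the degenerate class $f^*\alpha$ on $\wX$ into a genuine \kahler class $f^*\alpha + \ep\beta$, show that $\wE$ remains stable under this perturbation, then apply the classical Bogomolov--Gieseker inequality on the smooth manifold $\wX$, and finally send the perturbation to zero.

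The crux is the openness-of-stability statement: $\wE$ is stable with respect to $f^*\alpha + \ep\beta$ for all sufficiently small $\ep \ge 0$. The three items of \cref{stable} provide exactly what is needed. The first gives stability at $\ep = 0$; the second, a uniform upper bound $\cc1\wF \cdot (f^*\alpha)^k \cdot \beta^{n-1-k} \le C$ over all subsheaves $\wF \subset \wE$ and all $k \in [0, n-1]$; the third, a uniform slope gap $\mu_{f^*\alpha}(\wF) \le \mu_{f^*\alpha}(\wE) - \delta$ over all proper subsheaves. Setting $s \defn \rk\wF$ and expanding the binomial,
$$rs \bigl( \mu_{f^*\alpha + \ep\beta}(\wF) - \mu_{f^*\alpha + \ep\beta}(\wE) \bigr) = (r \cc1\wF - s \cc1\wE) \cdot (f^*\alpha + \ep\beta)^{n-1},$$
the leading term (in the expansion, corresponding to $k = n-1$) is bounded above by $-rs\delta$ by the slope gap, while the remaining $k < n-1$ terms contribute at most $M \ep$ with $M$ independent of $\wF$: the $\cc1\wF$ pieces are controlled \emph{from above} by the uniform bound from \cref{stable}, and the $\cc1\wE$ pieces are fixed. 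Choosing $\ep_0$ small enough, we conclude that $\wE$ is stable with respect to $f^*\alpha + \ep\beta$ for every $\ep \in [0, \ep_0]$.

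Once openness is established, one invokes the classical smooth Bogomolov--Gieseker inequality on the compact \kahler manifold $\wX$ applied to the locally free stable sheaf $\wE$ polarized by the genuine \kahler class $f^*\alpha + \ep\beta$, yielding
$$\Delta(\wE) \cdot (f^*\alpha + \ep\beta)^{n-2} \ge 0.$$
Sending $\ep \to 0$ by continuity gives $\Delta(\wE) \cdot (f^*\alpha)^{n-2} \ge 0$, which by \cref{chern E} and \cref{comb chern} is precisely $\Delta(\sE) \cdot \alpha^{n-2}$.

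The main obstacle is the openness step: since $f^*\alpha$ is merely big and nef (not \kahler) on $\wX$, classical openness arguments such as \cite[Proposition~2.1]{Cao13} cannot be applied directly. The subtlety is that destabilizing subsheaves $\wF$ might a priori have first Chern classes that are very negative against mixed monomials in $f^*\alpha$ and $\beta$; the uniform estimates of \cref{stable} are precisely what rules out such pathological behavior uniformly as $\ep \to 0$.
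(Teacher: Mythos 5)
Your proposal is correct and follows essentially the same route as the paper: both establish openness of stability under perturbation by $\ep\beta$ using exactly the uniform bound and uniform slope gap of \cref{stable}, and both then deduce the inequality for the \kahler classes $f^*\alpha+\ep\beta$ before letting $\ep\to 0$. The only cosmetic difference is that you cite the classical Bogomolov--Gieseker inequality on $\wX$ as a black box, whereas the paper unwinds it via the Kobayashi--Hitchin correspondence and the pointwise Hermite--Einstein (L\"ubke-type) inequality -- which is precisely how that black box is proved.
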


\begin{proof}
With the notation of \cref{stable} above, let us set $\ep_0:=\frac{\delta}{2(n-1)C}$. Then, one has for $\ep\le \ep_0$ and any subsheaf $\wF\subset \wE$ of rank strictly less than $\rk(\sE)$:
\begin{align*}
\mu_{f^*\alpha+\ep\beta}(\wF)&=\mu_{f^*\alpha}(\wF)+ \frac 1 r\sum_{k=0}^{n-2} \ep^{n-1-k} \cc1\wF\cdot f^*\alpha^k \cdot \beta^{n-1-k}\\
&\le \mu_{f^*\alpha}(\wE)-\delta + (n-1)C\ep\\
&\le \mu_{f^*\alpha}(\wE)-\delta/2,
\end{align*}
where the first inequality follows from~\lref{bounded} and~\lref{max} in \cref{stable}.

For the second assertion, one can use the Kobayashi--Hitchin correspondence to obtain for any $\ep>0$ a Hermite--Einstein metric $h_{\ep}$ on $\wE$ with respect to a \kahler metric $\om_\ep$ of the form $\om_\ep:=f^*\om+\ep \widehat \om$ where $\om\in \alpha$ (resp. $\widehat \om\in \beta$) is a \kahler metric on $X$ (resp. on $\wX$). It is classic to get the inequality \[\left(2r\cc2{\wE,h_\ep}-(r-1)\cc{1}{\wE,h_\ep}^2 \right)\wedge \om_\ep^{n-2} \ge 0\] pointwise, for any $\ep>0$. Integrating over $\wX$ and letting $\ep$ go to zero, one gets the expected inequality. 
\end{proof}

\subsection{Bogomolov--Gieseker inequality for semistable sheaves}

In this section, we explain how to extend \cref{open} to the semistable case.
This is quite likely standard, but we will give the details for the reader's convenience.
The results in this section are strictly speaking not needed in the rest of the article, but they do simplify the proof of \cref{positivity c2} somewhat.

\begin{prop}[Bogomolov--Gieseker inequality for semistable sheaves] \label{BG ss}
In \cref{setup stability} above, assume that $\sE$ is semistable with respect to $\alpha$.
Then
\[ \Delta(\sE) \cdot \alpha^{n-2} \ge 0. \]
\end{prop}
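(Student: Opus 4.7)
The plan is to reduce the semistable case to the stable one (already established in \cref{open}) by induction on $r \defn \rk \sE$. The base case $r = 1$ is immediate: $\widehat\sE$ is a line bundle on $\widetilde X$, so $\Delta(\sE) \cdot \alpha^{n-2} = 2\, \cc2{\widehat\sE} \cdot (f^*\alpha)^{n-2} = 0$, trivially satisfying the inequality.

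For the inductive step, if $\sE$ is in fact stable, \cref{open} applies directly. Otherwise, $\sE$ is strictly semistable, so there exists a proper saturated subsheaf $\sF \subset \sE$ with $\mu_\alpha(\sF) = \mu_\alpha(\sE)$. Setting $\sG \defn (\sE/\sF)^{\ddual}$ and replacing $\sF$ by its reflexive hull (which agrees with $\sF$ in codimension one and hence does not alter any of the relevant Chern numbers), both $\sF$ and $\sG$ are reflexive, semistable of slope $\mu_\alpha(\sE)$, and of rank strictly less than $r$. The inductive hypothesis thus yields $\Delta(\sF) \cdot \alpha^{n-2} \geq 0$ and $\Delta(\sG) \cdot \alpha^{n-2} \geq 0$.

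The crux is then the classical discriminant identity, valid for a short exact sequence of locally free sheaves,
\[
\frac{\Delta(\sE)}{r_\sE} = \frac{\Delta(\sF)}{r_\sF} + \frac{\Delta(\sG)}{r_\sG} - \frac{r_\sF\, r_\sG}{r_\sE}\, \xi^2, \qquad \xi \defn \frac{\cc1\sF}{r_\sF} - \frac{\cc1\sG}{r_\sG},
\]
combined with the Hodge index theorem. The equality of slopes amounts to $\xi \cdot \alpha^{n-1} = 0$, so by the Khovanskii--Teissier/Hodge index inequality applied on the resolution $\widetilde X$ to the big and nef class $f^*\alpha$ (note $(f^*\alpha)^n = \alpha^n > 0$), one obtains $\xi^2 \cdot \alpha^{n-2} \leq 0$. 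Intersecting the identity above with $\alpha^{n-2}$ and plugging in the inductive bounds yields $\Delta(\sE) \cdot \alpha^{n-2} \geq 0$, completing the induction.

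The main technical obstacle is justifying the discriminant identity on the singular space $X$, since $f^\sharp$ is not exact and the sequence $0 \to \sF \to \sE \to \sE/\sF \to 0$ need not pull back to a short exact sequence of locally free sheaves on $\widetilde X$. To handle this I would refine the choice of resolution so that $f^\sharp \sF$, $f^\sharp \sE$, and $f^\sharp \sG$ are simultaneously locally free; they then fit into a short exact sequence on a big open $U \subset \widetilde X$ whose complement has codimension $\geq 2$. Arguing as in \cref{637}, modifications of locally free sheaves supported on such a codimension $\geq 2$ locus do not affect the intersection of their Chern classes with $(f^*\alpha)^{n-2}$, so the discriminant identity holds as an identity of real numbers after pairing with $\alpha^{n-2}$, which is what the argument requires.
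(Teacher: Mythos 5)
Your overall strategy coincides with the paper's: split the semistable sheaf into stable pieces of equal slope, control the discriminant of the extension via the Whitney formula, and conclude with the Hovanskii--Teissier inequality (the paper's \cref{BG ES} is exactly your discriminant identity, and its \cref{JH ex} provides the filtration). The genuine problem is in your final paragraph, where you justify the discriminant identity over the singular $X$. You pull back $0 \to \sF \to \sE \to \sE/\sF \to 0$ to a resolution where $f^\sharp\sF$, $f^\sharp\sE$, $f^\sharp\sG$ are locally free, note that they form an exact sequence only on a big open set $U$, and invoke the mechanism of \cref{637} to discard the discrepancy. But \cref{637} requires the cohomology class $a$ to satisfy $a|_Z = 0$ on the bad locus, and $(f^*\alpha)^{n-2}$ does \emph{not} restrict to zero on $\wX \setminus U$: that complement contains the exceptional divisor $F$ (where $f^*\sF \to f^*\sE$ degenerates and where the true cokernel differs from $f^\sharp\sG$), and products such as $F_i \cdot F_j \cdot (f^*\alpha)^{n-2}$ are nonzero in general --- the paper's Kummer computation, where the class supported on $E$ contributes $E^2 = -32$, is precisely such a term in dimension two. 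Concretely, if $Q$ denotes the actual cokernel of $f^\sharp\sF \to f^\sharp\sE$, then $\cc1Q - \cc1{f^\sharp\sG}$ is a divisor class supported on $F$, and the resulting corrections to $\cpc12{Q} \cdot (f^*\alpha)^{n-2}$ and $\cc2Q \cdot (f^*\alpha)^{n-2}$ have no definite sign, so the inequality you need does not follow.

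The paper sidesteps this by performing the d\'evissage entirely \emph{upstairs}: $\wE$ is semistable with respect to the nef and big class $f^*\alpha$ on the manifold $\wX$, one takes a Jordan--H\"older filtration there, the graded pieces sit in honest short exact sequences so the Whitney formula applies on the nose (\cref{BG ES} with $\gamma = f^*\alpha$), and the Bogomolov--Gieseker inequality for each stable graded piece is obtained by the same perturbation $f^*\alpha + \ep\beta$ as in \cref{open} --- a step that is also needed because $f^*\alpha$ is not K\"ahler, so Kobayashi--Hitchin cannot be invoked for it directly. If you reorganize your induction so that the destabilizing subsheaf and its quotient are taken on $\wX$ rather than on $X$, your argument goes through; as written, the transfer of the discriminant identity from $\wX$ down to $X$ is a gap.
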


The natural strategy is to consider the Jordan--H\"older filtration of $\sE$ and analyse how the discriminant behaves under a short exact sequence.
This is the content of \cref{JH ex} and \cref{BG ES} below.
If $\gamma$ is a \kahler class, then \cref{JH ex} is contained in~\cite[Chapter~V, Theorem~7.18]{Koba}, but without a proof.
A statement similar to \cref{BG ES} can be found in~\cite[Corollary~7.3.2]{HuybrechtsLehn10}.

\begin{lem}[Jordan--H\"older filtrations] \label{JH ex}
Let $X$ be an $n$-dimensional compact \kahler manifold, $\gamma \in \HH1,1.X.\R.$ a nef and big class and $\sE$ a $\gamma$-semistable reflexive sheaf.
Then $\sE$ admits a \emph{Jordan--H\"older filtration}, i.e.~a filtration
\[ 0 = \sE_0 \subset \sE_1 \subset \dots \subset \sE_d = \sE \]
where for each $i = 0, \ldots, d - 1$, the sheaf $\Gr_i \sE \defn \factor{\sE_{i+1}}{\sE_i}$ is torsion-free and $\gamma$-stable with $\mu_\gamma \big( \! \Gr_i \sE \big) = \mu_\gamma(\sE)$.
\end{lem}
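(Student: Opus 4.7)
The plan is induction on the rank $r \defn \rk \sE$, proved more generally for any torsion-free $\gamma$-semistable sheaf (the reflexive hypothesis in the statement is then a special instance). The base case $r = 1$, and more generally the case in which $\sE$ is already $\gamma$-stable, is handled by the trivial filtration $0 \subset \sE$. Now assume $\sE$ is semistable but not stable. Then there exists a subsheaf $\sF \subsetneq \sE$ with $0 < \rk \sF < r$ satisfying $\mu_\gamma(\sF) \ge \mu_\gamma(\sE)$; semistability promotes this to equality. Among all such subsheaves I pick one of minimal rank $r_1$, and replace it by its saturation $\sE_1 \subset \sE$. The quotient $\sE_1/\sF$ is torsion, hence $\cc{1}{\sE_1/\sF}$ is an effective divisor class; nefness of $\gamma$ therefore yields $\mu_\gamma(\sE_1) \ge \mu_\gamma(\sF)$, while semistability of $\sE$ forces the reverse inequality. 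Thus both slope and rank are preserved under saturation, and I claim $\sE_1$ is $\gamma$-stable: any $\sG \subsetneq \sE_1$ with $0 < \rk \sG < r_1$ is again a subsheaf of $\sE$, so $\mu_\gamma(\sG) \le \mu_\gamma(\sE) = \mu_\gamma(\sE_1)$ by semistability, and equality is ruled out by the minimality of $r_1$.

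Saturation also ensures that the quotient $\sE/\sE_1$ is torsion-free. A standard slope-additivity computation shows it is $\gamma$-semistable of slope $\mu_\gamma(\sE)$: for any nonzero subsheaf $\sH \subset \sE/\sE_1$ with preimage $\sK \subset \sE$, the identity
\[ \rk(\sK)\,\mu_\gamma(\sK) \;=\; r_1\,\mu_\gamma(\sE) \;+\; \rk(\sH)\,\mu_\gamma(\sH), \]
combined with $\mu_\gamma(\sK) \le \mu_\gamma(\sE)$ coming from semistability of $\sE$, gives $\mu_\gamma(\sH) \le \mu_\gamma(\sE)$. Applying the inductive hypothesis to $\sE/\sE_1$, which has strictly smaller rank, and pulling the resulting filtration back via the quotient map $\sE \surj \sE/\sE_1$, yields the desired Jordan--H\"older filtration of $\sE$ with $\sE_1$ as its first nontrivial term.

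The main subtlety I anticipate is that $\gamma$ is only assumed nef and big, not \kahler, so one cannot directly invoke standard positivity properties of \kahler polarisations. This affects only the effective-divisor argument controlling the slope change under saturation, and nefness of $\gamma$ is precisely what is required for the inequality $\cc{1}{\sE_1/\sF} \cdot \gamma^{n-1} \ge 0$, so the step goes through without modification. A secondary (purely formal) point is that the induction must be formulated for torsion-free rather than reflexive sheaves, since $\sE/\sE_1$ may fail to be reflexive even when $\sE$ is. No further difficulty is expected.
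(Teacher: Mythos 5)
Your proof is correct and follows essentially the same elementary route as the paper's: both rest on the observation that a $\gamma$-semistable, non-stable sheaf admits a proper subsheaf of equal slope whose sub and quotient remain semistable, and both terminate by rank considerations. The paper packages this as a maximal-refinement argument on the poset of filtrations with torsion-free semistable graded pieces of slope $\mu_\gamma(\sE)$, whereas you organize it as an induction on rank with an explicitly saturated, minimal-rank (hence automatically stable) first term --- a more detailed write-up of the same idea, which in particular makes explicit the saturation and torsion-freeness points that the paper's ``clearly'' elides.
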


\begin{proof}
Consider the set of all filtrations $\sE_\bullet$ of $\sE$ whose graded pieces are torsion-free and semistable of slope $\mu_\gamma(\sE)$.
This set is nonempty and partially ordered by refinement.
Furthermore, if $\sE$ is not stable, then there is a proper subsheaf $\sE'$ with $\mu_\gamma(\sE') = \mu_\gamma(\sE)$.
Clearly both $\sE'$ and $\factor{\sE}{\sE'}$ are semistable.
This means that any filtration having a non-stable graded piece can be refined.
Consequently, a filtration maximal with respect to refinement (which exists due to rank reasons) is a Jordan--H\"older filtration.
\end{proof}

\begin{lem}[Discriminant of extension] \label{BG ES}
Let $X$ be a compact \kahler manifold of dimension $n$, and let $\gamma \in \HH1,1.X.\R.$ be a nef and big class.
Assume that we have a short exact sequence
\[0 \lto E_1 \lto E \lto E_2 \lto 0\]
of reflexive sheaves such that $\mu_\gamma(E_1) = \mu_\gamma(E_2)$ and $\Delta(E_i) \cdot \gamma^{n-2} \ge 0$ for $i =1, 2$.
Then,
\[ \Delta(E) \cdot \gamma^{n-2} \ge 0. \]
\end{lem}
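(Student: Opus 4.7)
The strategy is a direct algebraic manipulation that reduces the statement to the Hodge index theorem for nef and big classes on the smooth compact \kahler manifold $X$. Let $r_i = \rk E_i$ and $r = r_1 + r_2$.

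\textbf{Step 1 (Whitney identity for the discriminant).}
Since $X$ is smooth and the Chern classes of coherent sheaves may be computed via a locally free resolution, the Whitney formula applied to the exact sequence yields
\[ \cc1E = \cc1{E_1} + \cc1{E_2}, \qquad \cc2E = \cc2{E_1} + \cc1{E_1}\cdot\cc1{E_2} + \cc2{E_2}. \]
A direct calculation (expanding $2r\,\cc2E - (r-1)\cpc12E$ and grouping terms according to $\cc2{E_i}$ and $\cc1{E_i}^2$) gives the purely formal identity
\[ \Delta(E) = \frac{r}{r_1}\Delta(E_1) + \frac{r}{r_2}\Delta(E_2) - r_1 r_2\, \xi^2, \qquad \xi \defn \frac{\cc1{E_1}}{r_1} - \frac{\cc1{E_2}}{r_2}. \]
The first two terms are nonnegative against $\gamma^{n-2}$ by hypothesis.

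\textbf{Step 2 (The class $\xi$ is $\gamma$-primitive).}
The equality of slopes $\mu_\gamma(E_1) = \mu_\gamma(E_2)$ translates precisely into
\[ \xi \cdot \gamma^{n-1} = \mu_\gamma(E_1) - \mu_\gamma(E_2) = 0. \]
Thus $\xi \in \HH1,1.X.\R.$ is primitive with respect to the nef and big class $\gamma$.

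\textbf{Step 3 (Hodge index).}
By the generalized Hodge index theorem for nef and big classes on a compact \kahler manifold (a consequence of the Khovanskii--Teissier/Hodge--Riemann inequalities, applicable since $\gamma$ is nef and big), any class $\xi$ satisfying $\xi \cdot \gamma^{n-1} = 0$ obeys
\[ \xi^2 \cdot \gamma^{n-2} \le 0. \]
Combining this with Step~1 and the nonnegativity hypothesis on $\Delta(E_i)\cdot\gamma^{n-2}$ gives
\[ \Delta(E)\cdot \gamma^{n-2} = \tfrac{r}{r_1}\Delta(E_1)\cdot\gamma^{n-2} + \tfrac{r}{r_2}\Delta(E_2)\cdot \gamma^{n-2} - r_1 r_2\, \xi^2 \cdot \gamma^{n-2} \ge 0. \]

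The only step requiring external input is the Hodge index inequality in Step~3 for a merely nef and big class (rather than a \kahler class), but in the \kahler case this is classical; for nef and big classes it follows by approximation $\gamma + \e \omega$ for an auxiliary \kahler class $\omega$, passing to the limit $\e \downarrow 0$ in the quadratic form $\xi \mapsto \xi^2 \cdot \gamma^{n-2}$. I expect the main (mild) obstacle to be carefully writing out the Whitney expansion of Step~1, but since it is a symmetric identity in $(r_1,r_2)$ with predictable structure, it should present no real difficulty.
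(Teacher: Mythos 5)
Your proposal is correct and follows essentially the same route as the paper: expand the Chern classes via the Whitney formula, use the hypotheses on $\Delta(E_i)$, and reduce to the Khovanskii--Teissier/Hodge index inequality for the $\gamma$-primitive class $\xi = \frac{\cc1{E_1}}{r_1} - \frac{\cc1{E_2}}{r_2}$ (the paper cites~\cite[Proposition~2.5]{DP03} for this step). Your only deviation is cosmetic: you package the computation as the exact identity $\Delta(E) = \frac{r}{r_1}\Delta(E_1) + \frac{r}{r_2}\Delta(E_2) - r_1 r_2\,\xi^2$ (which is correct) instead of chaining the inequalities term by term.
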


\begin{proof}
Let $r_i=\rk(E_i)$, $\alpha_i=c_1(E_i)$, $\beta_i=c_2(E_i)$ (resp. $r=\rk(E)$, $\alpha=c_1(E)$, $\beta=c_2(E)$. We have
\[r=r_1+r_2, \quad \alpha=\alpha_1+\alpha_2, \quad \beta=\beta_1+\beta_2+\alpha_1\alpha_2.\]
Therefore, we get
\begin{align*}
2r\beta &= 2r_1\beta_1+2r_2\beta_2+2r\alpha_1\alpha_2+2r_2\beta_1+2r_1\beta_2\\
(r-1)\alpha^2 &= (r_1-1)\alpha_1^2+(r_2-1)\alpha_2^2+2(r-1)\alpha_1\alpha_2+r_2\alpha_1^2+r_1\alpha_2^2.
\end{align*}
Rewriting the inequality $\Delta(E_i) \cdot \gamma^{n-2} \ge 0$ as  
\[2r_i \beta_i \cdot \gamma^{n-2} \ge (r_i-1) \alpha_i^2 \cdot \gamma^{n-2},\] we get 
{\small
\begin{equation*}
2r\beta \cdot \gamma^{n-2} \ge \left((r_1-1) \alpha_1^2 +(r_2-1) \alpha_2^2 +2r \alpha_1\alpha_2 +(r_2-1) \frac{r_1}{r_2}\alpha_2^2+  (r_1-1) \frac{r_2}{r_1}\alpha_1^2\right)\cdot \gamma^{n-2}
\end{equation*}}
hence
{\small 
\begin{align*}
(2r\beta-(r-1)\alpha^2) \cdot \gamma^{n-2}& \ge \left(2\alpha_1\alpha_2+\frac{r_2}{r_1}[(r_1-1)-r_1]\alpha_1^2+\frac{r_1}{r_2}[(r_2-1)-r_2]\alpha_2^2\right) \cdot \gamma^{n-2}\\
&=\left(2\alpha_1\alpha_2-\frac{r_2}{r_1}\alpha_1^2-\frac{r_1}{r_2}\alpha_2^2 \right) \cdot \gamma^{n-2}\\
&=-r_1r_2\,\Big(\frac{\alpha_1}{r_1}-\frac{\alpha_2}{r_2}\Big)^2\cdot \gamma^{n-2}. 
\end{align*}}%
Now, since $\mu_\gamma(E_1)=\mu_{\gamma}(E_2)$, we have $\big(\frac{\alpha_1}{r_1}-\frac{\alpha_2}{r_2}\big)\cdot \gamma^{n-1}=0$, hence
\[\Big(\frac{\alpha_1}{r_1}-\frac{\alpha_2}{r_2}\Big)^2\cdot \gamma^{n-2} \le 0\]
by the Hovanskii--Teissier inequality, cf.~\cite[Proposition~2.5]{DP03}.
The lemma is proved.
\end{proof}

We can now finish the proof of \cref{BG ss}.

\begin{proof}[Proof of \cref{BG ss}]
Let $f \from \wh X \to X$ be a resolution as in \cref{setup stability} and set $\wh \alpha \defn f^*\alpha$.
The class $\wh \alpha$ is a nef and big class on the compact \kahler manifold $\wh X$.
Since $\wh \sE$ is $\wh \alpha$-semistable (cf.~the proof of~\cref{elementary}), by \cref{JH ex} it has a Jordan--H\"older filtration
\[ 0 = \wh \sE_0 \subset \wh \sE_1 \subset \dots \subset \wh \sE_d = \wh \sE \]
whose graded pieces $\Gr_i \wh\sE$ are $\wh\alpha$-stable.

\begin{claim} \label{1055}
Pick a \kahler class $\beta$ on $\wh X$.
Then for $\ep > 0$ sufficiently small, the sheaf $\Gr_i \wh\sE$ remains $(\wh\alpha + \ep \beta)$-stable.
\end{claim}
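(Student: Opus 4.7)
The strategy is to treat each graded piece $\Gr_i \widehat \sE$ separately via an openness-of-stability argument modeled on \cref{stable} and \cref{open}, and then take the minimum of the resulting parameters $\varepsilon_0(i)$ over the finitely many indices $i \in \{0, \ldots, d-1\}$.

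Fix such an $i$. The sheaf $\Gr_i \widehat\sE$ is a torsion-free sheaf on the smooth compact Kähler manifold $\widehat X$, which is $\widehat\alpha$-stable by construction of the Jordan--Hölder filtration; here $\widehat\alpha = f^*\alpha$ is nef and big on $\widehat X$ and admits the smooth semipositive representative $f^*\omega$. Replacing $\Gr_i \widehat\sE$ by its reflexive hull (which has the same slope), I would then rerun the two-step argument of \cref{stable}. First, one establishes a uniform upper bound
\[ c_1(\wF) \cdot \widehat\alpha^k \cdot \beta^{n-1-k} \leq C_i \]
for every saturated subsheaf $\wF \subset \Gr_i \widehat\sE$ and every $0 \leq k \leq n-1$. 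The proof of \lref{bounded} in \cref{stable} applies essentially verbatim, since the Griffiths curvature estimate only uses that $\widehat\alpha$ has a smooth semipositive representative; the constant $C_i$ depends on $\operatorname{tr}_{\widehat\omega}(f^*\omega)$, which is a fixed finite quantity. Second, one establishes a uniform slope gap
\[ \mu_{\widehat\alpha}(\Gr_i \widehat\sE) - \mu_{\widehat\alpha}(\wF) \geq \delta_i > 0 \]
for every proper subsheaf $\wF \subset \Gr_i \widehat\sE$ of strictly smaller rank. Combining these two inputs exactly as in the proof of \cref{open} yields that the choice $\varepsilon_0(i) \defn \delta_i / \bigl( 2(n-1) C_i \bigr)$ makes $\Gr_i \widehat\sE$ remain stable with respect to $\widehat\alpha + \varepsilon \beta$ for all $0 < \varepsilon \leq \varepsilon_0(i)$. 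Setting $\varepsilon_0 \defn \min_i \varepsilon_0(i) > 0$ then concludes the claim.

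The main technical obstacle is the slope gap estimate. In \cref{stable}\lref{max} this is deduced from the finiteness of the relevant components of the Douady space of quotients via Toma's boundedness theorem, which is classically formulated with respect to a \emph{Kähler} polarization. Since $\widehat\alpha$ is here only nef and big, I would handle this by approximation: for any (small) $t > 0$ the class $\widehat\alpha + t\beta$ is genuinely Kähler on $\widehat X$, so Toma's result gives a uniform slope gap for slopes computed with respect to $\widehat\alpha + t\beta$; letting $t \to 0$ and using continuity of intersection numbers together with the $\widehat\alpha$-stability of $\Gr_i \widehat\sE$ produces the desired $\delta_i > 0$. Once this point is handled, the rest of the proof is a straightforward adaptation of the arguments already given in this section.
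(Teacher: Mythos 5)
Your overall plan --- perturbing each graded piece separately and taking the minimum of the finitely many $\ep_0(i)$ --- is sound, and the uniform bound $\mathrm c_1(\wF)\cdot\wh\alpha^k\cdot\beta^{n-1-k}\le C_i$ does go through as in~\lref{bounded} of \cref{stable}: a saturated subsheaf of $\Gr_i\wh\sE$ may be replaced by its preimage in $\wh\sE$, which shifts the first Chern class by the fixed class $\cc1{\wh\sE_i}$, so one can even quote~\lref{bounded} directly. The genuine gap is in the slope-gap step. First, the assertion that ``Toma's result gives a uniform slope gap for slopes computed with respect to $\wh\alpha+t\beta$'' is circular: a slope gap with respect to $\wh\alpha+t\beta$ presupposes that $\Gr_i\wh\sE$ is $(\wh\alpha+t\beta)$-stable, which is precisely what is being proved. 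Second, and more seriously, the approximation cannot be repaired by continuity. What is needed is that the saturated subsheaves $\wF\subset\Gr_i\wh\sE$ with $\mu_{\wh\alpha}(\wF)\ge\mu_{\wh\alpha}(\Gr_i\wh\sE)-1$ form a bounded family; boundedness with respect to the \kahler class $\wh\alpha+t\beta$ would require a \emph{lower} bound on $\mu_{\wh\alpha+t\beta}(\wF)$, hence a lower bound on the mixed numbers $\mathrm c_1(\wF)\cdot\wh\alpha^k\cdot\beta^{n-1-k}$. The curvature argument of~\lref{bounded} bounds these only from \emph{above} (the Griffiths inequality goes one way for subbundles), and over the a priori unbounded family of such $\wF$ no lower bound holds in general. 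So for no fixed $t>0$ is this family contained in a Toma-bounded set for the polarization $\wh\alpha+t\beta$, and letting $t\to0$ does not produce a $\delta_i$.

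The paper's way around this is not to run the boundedness argument on $\wh X$ at all, but to descend: define inductively $\sE_i$ as the saturation of $f_*\big(\wh\sE_i\big|_{\wh X\setminus F}\big)$ inside $\sE_{i+1}$, and check via the argument of \cref{elementary} that the graded pieces $\Gr_i\sE=\factor{\sE_{i+1}}{\sE_i}$ are $\alpha$-stable on $X$, where $\alpha$ is a genuine \kahler class. Then all of \cref{stable} --- including item~\lref{max}, whose proof invokes Toma's boundedness on the compact \kahler space $X$ polarized by $\alpha$ --- applies to $\Gr_i\sE$, and the resulting gap transfers to subsheaves of $\Gr_i\wh\sE$ because the relevant first Chern class corrections are supported on $F$ and pair to zero with $(f^*\alpha)^{n-1}$. (Alternatively, one may simply quote the openness-of-stability statement of Cao cited in the text.) Once the gap is obtained this way, the remainder of your argument is correct.
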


\begin{proof}[Proof of \cref{1055}]
For instance, this follows from~\cite[Proposition~2.1]{Cao13} (cf.~also the proof of~\cite[Lemma~3.2]{CaoHoering17}), but one could also appeal to \cref{stable}.
In order to do so, define inductively the saturation $\sE_{i}$ of $f_* \big( \wh \sE_{i}\big|_{\wh X \setminus F} \big)$ inside $\sE_{i+1}$, starting from $i = d - 1$ down to $i = 1$.
The arguments of \cref{elementary} show that the torsion-free sheaves $\Gr_i \sE \defn \factor{\sE_{i+1}}{\sE_i}$ are $\alpha$-stable.
By the proof of \cref{stable}, $\Gr_i \wh\sE$ remains $(\wh\alpha + \ep \beta)$-stable, which had to be shown.
\end{proof}

By \cref{1055} and the same perturbation argument as in \cref{open}, we conclude that
\[ \Delta ( \Gr_i \wh\sE ) \cdot \wh \alpha^{n-2} \ge 0. \]
Using \cref{BG ES} inductively, we get
\[ \Delta(\wh \sE) \cdot \wh \alpha^{n-2} \ge 0. \]
Since the left-hand side equals $\Delta(\sE) \cdot \alpha^{n-2}$ by definition, \cref{BG ss} is proved.
\end{proof}

\subsection{Application to varieties with trivial first Chern class} \label{subsec ICY IHS}

Let $X$ be a compact \kahler space with klt singularities such that $\cc1X = 0 \in \HH2.X.\R.$.
In that setting, the Abundance conjecture is known, i.e.~$K_X$ is a torsion \Q-line bundle, cf.~e.g.~\cite[Corollary~1.18]{JHM2}.
If we assume additionally that $X$ is smooth in codimension two, we immediately infer that $\cpc12{\T X} \cdot \alpha^{n-2} = 0$ for any \kahler class $\alpha$ thanks to \cref{comp I}.
Moreover, the recent Decomposition Theorem~\cite[Theorem~A]{BakkerGuenanciaLehn20} asserts that up to a \qe cover, $X$ splits as a product of a complex torus, irreducible Calabi--Yau varieties and irreducible holomorphic symplectic varieties, where the latter two are defined as follows.

\begin{defi}[ICY and IHS varieties] \label{defi IHS ICY}
Let $X$ be a compact \kahler space of dimension $n \ge 2$ with canonical singularities and $\can X \isom \O X$.
\begin{enumerate}
\item We call $X$ \emph{irreducible Calabi--Yau} (ICY) if $\HH0.Y.\Omegar Yp. = 0$ for all integers $0 < p < n$ and all \qe covers $Y \to X$, in particular for $X$ itself.
\item We call $X$ \emph{irreducible holomorphic symplectic} (IHS) if there exists a holomorphic symplectic two-form $\sigma \in \HH0.X.\Omegar X2.$ such that for all \qe covers $\gamma \from Y \to X$, the exterior algebra of global reflexive differential forms is generated by $\gamma^{[*]} \sigma$.
\end{enumerate}
\end{defi}

Given the Bochner principle~\cite[Theorem~A]{CGGN20}, it is relatively easy to show that the tangent sheaf $\T X$ of an IHS variety $X$ (resp.~ICY variety $X$) is stable with respect to any polarization.
The following result is then an immediate consequence of the Decomposition Theorem and \cref{open}.
However, we can give a more pedestrian proof by relying only on the polystability of $\T X$ (actually only its semistability) rather than a global structure result.

\begin{cor}[Semipositivity of $\mathrm c_2$] \label{positivity c2}
Let $X$ be a compact \kahler space with klt singularities such $\cc1X = 0 \in \HH2.X.\R.$.
Assume furthermore that $X$ is smooth in codimension two.
Then, one has
\[ \cc2X \cdot \alpha^{n-2} \geq 0 \]
for any \kahler class $\alpha \in \HH2.X.\R.$.
\end{cor}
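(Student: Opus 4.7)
The plan is to apply the Bogomolov--Gieseker inequality for semistable sheaves (\cref{BG ss}) to the tangent sheaf $\T X$ itself.

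\textbf{Step 1 (reduction to a BG inequality).} First, since $X$ is smooth in codimension two, \cref{comp II} identifies $\cc 2 X$ with $\cc 2{\T X}$ in $\Hh 2n-4.X.\R.$. Next, as recalled at the beginning of \cref{subsec ICY IHS}, the hypothesis $\cc 1 X = 0$ combined with Abundance for klt varieties with numerically trivial canonical class implies that $K_X$ is a torsion $\Q$-line bundle, so $\cc 1{\det \T X} = 0 \in \HH 2.X.\R.$. Applying \cref{comp I} with $\ell = 2$ then yields $\cpc 1 2{\T X} \cdot \alpha^{n-2} = 0$ for every \kahler class $\alpha$. Setting $r = n = \rk \T X$, these two facts combine to give
\[ \Delta(\T X) \cdot \alpha^{n-2} = 2n \cdot \cc 2{\T X} \cdot \alpha^{n-2} - (n-1) \cdot \cpc 1 2{\T X} \cdot \alpha^{n-2} = 2n \cdot \cc 2 X \cdot \alpha^{n-2}, \]
so the statement is equivalent to $\Delta(\T X) \cdot \alpha^{n-2} \ge 0$.

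\textbf{Step 2 ($\alpha$-semistability of $\T X$).} The essential input is that $\T X$ is $\alpha$-semistable. I would deduce this from the singular Beauville--Bogomolov Decomposition Theorem~\cite[Theorem~A]{BakkerGuenanciaLehn20}: there exists a \qe cover $\gamma \from Y \to X$ such that $Y$ is a product $T \times \prod_i Z_i$ of a complex torus $T$ and ICY/IHS factors $Z_i$. The Bochner principle~\cite[Theorem~A]{CGGN20} implies that each $\T{Z_i}$ is stable with respect to any \kahler class (this is the fact alluded to in the paragraph preceding \cref{positivity c2}), while $\T T$ is trivial. Consequently $\T Y$ splits as a direct sum of stable (or flat) reflexive sheaves of slope zero with respect to $\gamma^* \alpha$; it is polystable, and in particular $(\gamma^* \alpha)$-semistable. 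Since $\gamma$ is étale in codimension one, $\gamma^{[*]} \T X \cong \T Y$ away from a locus of codimension at least two, and any $\alpha$-destabilizing subsheaf of $\T X$ would pull back to contradict the semistability of $\T Y$; hence $\T X$ is $\alpha$-semistable.

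\textbf{Step 3 (conclusion).} Feeding the semistability of $\T X$ into \cref{BG ss} produces $\Delta(\T X) \cdot \alpha^{n-2} \ge 0$, which via Step 1 is exactly the desired inequality.

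The main obstacle is the semistability used in Step 2, which currently depends on two substantial results (the Decomposition Theorem and the Bochner principle). A more self-contained route would be to extract semistability directly from the existence of a singular Ricci-flat \kahler metric on $\Reg X$~\cite{EGZ} via an $L^2$ Bochner argument, but as the introduction emphasizes, the curvature tensor of such a metric is not known to be $L^2$-integrable in general, which makes such an approach delicate; this is precisely why the indirect route outlined above is preferred.
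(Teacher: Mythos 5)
Your proof is correct in outline and shares its skeleton with the paper's: both reduce the claim to $\Delta(\T X) \cdot \alpha^{n-2} \ge 0$ using Abundance together with \cref{comp I} and \cref{comp II}, and both conclude via \cref{BG ss}. Where you differ is the source of semistability. The paper deliberately avoids the Decomposition Theorem at this point: it passes to a \qe cover $p \from \wt X \to X$ on which the canonical sheaf is actually trivial, quotes the polystability of $\T{\wt X}$ with respect to \emph{any} \kahler class directly from~\cite[Theorem~A]{GSS}, and transfers the conclusion back to $X$ via~\cite[Proposition~5.6]{GK20}. The paragraph just before the corollary explicitly flags your route (Decomposition Theorem plus Bochner principle) as the ``immediate'' one and announces instead ``a more pedestrian proof by relying only on the polystability of $\T X$ \dots\ rather than a global structure result''; so your argument works, but it imports~\cite[Theorem~A]{BakkerGuenanciaLehn20}, a far heavier input than is needed, without buying anything in return. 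Two smaller remarks. First, in Step~2 you assert that $\T Y$ is $(\gamma^*\alpha)$-polystable because its summands are stable on the factors; stability of $\T{Z_i}$ on $Z_i$ does not by itself give (semi)stability of $p_i^* \T{Z_i}$ on the product with respect to an arbitrary \kahler class, and you should first decompose $\gamma^*\alpha$ as a sum of classes pulled back from the factors (K\"unneth, using $\HH1.Z_i.\R. = 0$) and then run the standard product-semistability argument --- routine, but it needs saying. Second, your Step~1 is fine and in fact marginally cleaner than the paper's, since using $\cc1{\det \T X} = 0 \in \HH2.X.\R.$ lets you kill $\cpc12{\T X} \cdot \alpha^{n-2}$ on $X$ itself, without passing to the cover for that part of the computation.
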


\begin{proof}
We recalled above that $K_X$ is torsion; in particular, there exists a \qe cover $p:\widetilde X \to X$ such that $\det \T {\wh X} \isom \O {\wt X}$, hence $\cpc12{\T {\wt X}} \cdot p^*\alpha^{n-2} = 0$ by \cref{comp I}. By \cite[Proposition~5.6]{GK20}), we get 
\[\Delta(\T X) \cdot \alpha^{n-2} = \frac{1}{\mathrm{deg}(p)} \cc2{\T {\wt X}} \cdot p^*\alpha^{n-2}. \]
The corollary now follows from the polystability of $\T {\wt X}$ with respect to any Kähler class \cite[Theorem~A]{GSS} combined with \cref{BG ss}.
\end{proof}

\begin{rem-plain}
It is instructive to return to the Kummer surface $X$ of \cref{exa c2} to see how the above proof fails if $\codim X{\sing X} = 2$.
In that example, $\Delta(\T X) = 4 \cdot (-8) - (- 32) = 0$, but $\cpc12{\T X}$ is nonzero and $\cc2{\T X}$ ends up being negative.
Of course, the statement of \cref{BG ss} itself still holds in this example.
\end{rem-plain}

\begin{rem-plain}
In the spirit of~\cite[Theorem~6.6]{Miyaoka}, it would be interesting to prove the inequality $\cc2X \cdot \alpha^{n-2} \geq 0$ under the weaker assumption that $K_X$ is nef, although this is probably quite challenging.
\end{rem-plain}

\subsection{The case of equality}

In this subsection, we prove~\lref{B.2} and~\lref{B.3}.
That is, we discuss what happens if equality holds in the Bogomolov--Gieseker inequality.
\cref{change of class} below asserts that if $\cpc12\sE$ and $\cc2\sE$, seen as symmetric multilinear forms on $\HH2.X.\R.$, vanish against \emph{one} \kahler class, then they vanish against \emph{any} \kahler class.
In case $X$ has rational singularities, the Hodge structure on $\HH2.X.\C.$ is pure and the statement has a nice reformulation in Hodge-theoretic terms: the Chern classes in question vanish against $\HHs1,1.X.$.

\begin{prop}[Vanishing discriminant, I] \label{change of class}
In \cref{setup stability}, assume that $\sE$ is stable with respect to $\alpha$.
Let $\beta$ be an arbitrary \kahler class. 
\begin{enumerate}
\item\label{coc.1} If $\Delta(\sE) \cdot \alpha^{n-2} = 0$, then $\Delta(\sE) \cdot \beta^{n-2} = 0$.
\item\label{coc.2} If $\cpc12\sE \cdot \alpha^{n-2} = \cc2\sE \cdot \alpha^{n-2} = 0$, then $\cpc12\sE \cdot \beta^{n-2} = \cc2\sE \cdot \beta^{n-2} = 0$.
\end{enumerate}
\end{prop}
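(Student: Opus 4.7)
The plan is to combine the openness of slope stability from \cref{open} with the non-negativity of the Bogomolov--Gieseker discriminant to show that $\alpha$ is a local minimum, of value $0$, of the polynomial $P(\gamma) \defn \Delta(\sE) \cdot \gamma^{n-2}$ on $\HH2.X.\R.$, and then to extract from this the vanishing of the mixed intersections. The first step is to upgrade \cref{open} from one-sided perturbations of $f^*\alpha$ to two-sided perturbations $\alpha + \varepsilon \eta$ on $X$ in an arbitrary direction $\eta \in \HH2.X.\R.$: writing $\eta = \eta_+ - \eta_-$ as a difference of Kähler classes on $X$, the quantitative bound~\lref{bounded} in \cref{stable} applies to each of $\eta_\pm$, and combined with the slope gap~\lref{max}, one concludes that $\sE$ remains $(\alpha + \varepsilon \eta)$-stable for $|\varepsilon| \leq \varepsilon_0(\eta)$. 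The requirement that $\alpha + \varepsilon \eta$ remain Kähler is automatic for small $|\varepsilon|$ since $\alpha$ is interior to the Kähler cone of $X$.

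\textbf{First-order vanishing.}
With two-sided openness at hand, the Bogomolov--Gieseker inequality from \cref{open} shows that for every $\eta \in \HH2.X.\R.$ the polynomial
\[
Q_\eta(\varepsilon) \defn \Delta(\sE) \cdot (\alpha + \varepsilon \eta)^{n-2}
\]
is non-negative on the interval $(-\varepsilon_0(\eta), \varepsilon_0(\eta))$, with $Q_\eta(0) = 0$ by hypothesis. Hence $\varepsilon = 0$ is a local minimum of $Q_\eta$, so $Q_\eta'(0) = 0$, and one extracts
\[
\Delta(\sE) \cdot \alpha^{n-3} \cdot \eta \;=\; 0 \qquad \text{for every } \eta \in \HH2.X.\R..
\]
In dimension $n = 3$ this already reads $\Delta(\sE) \cdot \beta = 0$ for every Kähler $\beta$, completing the proof of~\lref{coc.1} in that case.

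\textbf{Higher-order vanishing, main obstacle, and part~\lref{coc.2}.}
For $n \geq 4$, one still needs the vanishing of every mixed intersection $\Delta(\sE) \cdot \alpha^{n-2-k} \cdot \beta^k$ for $2 \leq k \leq n-2$, in particular of $\Delta(\sE) \cdot \beta^{n-2}$. The natural iteration is to move along the Kähler ray $\alpha_s \defn \alpha + s\beta$, on which $\sE$ remains stable for $s \in [0, s_0]$ by openness; combining the previous paragraph with the hypothesis, the polynomial $s \mapsto P(\alpha_s)$ factors as $s^2 \widetilde P(s)$ with $\widetilde P \geq 0$ near $s = 0$. The \emph{main obstacle} is to rule out that $\widetilde P(s) > 0$ for small $s > 0$: bare non-negativity and first-order vanishing do not suffice, and the propagation seems to require iterating the derivative argument across several independent multi-directional perturbations of $\alpha$ in order to force all successive higher-order symmetric tensors $(\eta_1, \ldots, \eta_k) \mapsto \Delta(\sE) \cdot \alpha^{n-2-k} \cdot \eta_1 \cdots \eta_k$ at $\alpha$ to vanish. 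For the refined statement~\lref{coc.2}, the additional hypothesis $\cpc12{\sE} \cdot \alpha^{n-2} = 0$ combined with the Hodge--Riemann bilinear relations on $\HH1,1.\wh X.\R.$ applied to the Hodge class $\cc1{\wh\sE}$ and the nef, big class $f^*\alpha$ constrains $\cc1{\wh\sE}$ sharply, reducing matters to~\lref{coc.1} applied to the full discriminant $\Delta(\sE) = 2r\,\cc2{\sE} - (r-1)\,\cpc12{\sE}$.
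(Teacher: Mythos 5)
Your argument has a genuine gap, and it is exactly the one you flag yourself. The first\--order step is fine: two\--sided openness of stability plus non\--negativity of $Q_\eta(\varepsilon) = \Delta(\sE)\cdot(\alpha+\varepsilon\eta)^{n-2}$ at a zero forces $Q_\eta'(0)=0$, i.e.\ $\Delta(\sE)\cdot\alpha^{n-3}\cdot\eta=0$. But that is where the information runs out: the same reasoning at second order only yields $Q_\eta''(0)\geq 0$, i.e.\ that $\eta\mapsto\Delta(\sE)\cdot\alpha^{n-4}\cdot\eta^2$ is positive semidefinite, and no amount of ``iterating the derivative argument across several independent multi\--directional perturbations'' can convert non\--negativity of an even\--order leading term into vanishing. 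So for $n\geq 4$ the mixed intersections $\Delta(\sE)\cdot\alpha^{n-2-k}\cdot\beta^{k}$ with $k\geq 2$ are not controlled, and part~\lref{coc.1} is not proved. The sketch of part~\lref{coc.2} is also not substantiated: since $\cc1{\wE}\cdot(f^*\alpha)^{n-1}$ is not assumed to vanish, the Hodge--Riemann/Hovanskii--Teissier inequality applied to $\cc1{\wE}$ and the nef and big class $f^*\alpha$ gives no contradiction from $\cpc12{\wE}\cdot(f^*\alpha)^{n-2}=0$ unless one also analyses the equality case (delicate for merely nef and big classes on the resolution), and even then one does not directly land back in~\lref{coc.1}.

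The actual proof proceeds analytically rather than intersection\--theoretically, and this is the idea you are missing: the vanishing hypotheses have a metric interpretation. Taking Hermite--Einstein metrics $h_\ep$ on $\wE$ with respect to $\om_\ep = f^*\om + \ep\,\om_{\wX}$, the L\"ubke\--type identities express $\Delta(\wE,h_\ep)\wedge\om_\ep^{n-2}$ as the $L^2$ norm of the trace\--free curvature and $\cc1{\wE,h_\ep}^2\wedge\om_\ep^{n-2}$ as minus the $L^2$ norm of the trace part. Under the hypotheses of~\lref{coc.2} the full curvature $\Theta_\ep$ therefore tends to $0$ in $L^2(\om_\ep)$; since $\om'_\ep = f^*\om' + \ep\,\om_{\wX}$ is uniformly comparable to $\om_\ep$ (because $\om$ and $\om'$ are comparable on the compact $X$), the $L^2$ norm with respect to $\om'_\ep$ also tends to $0$, which dominates $\bigl|\int \cc2{\wE,h_\ep}\wedge\om_\ep'^{\,n-2}\bigr|$ and yields the vanishing against $\beta^{n-2}$. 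Part~\lref{coc.1} then follows by applying this to $\End(\wE)$, which has $\cc1{\End(\wE)}=0$ and $\Delta(\End(\wE))=2r^2\Delta(\wE)$. The point is that $L^2$\--smallness of curvature is a statement independent of which (comparable) K\"ahler class one integrates against, which is precisely the propagation mechanism that the formal derivative argument cannot supply.
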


\begin{cor}[Vanishing discriminant, II] \label{coc rational}
In \cref{setup stability}, assume that $X$ has rational singularities and that $\sE$ is stable with respect to $\alpha$.
\begin{enumerate}
\item\label{cocr.1} If $\Delta(\sE) \cdot \alpha^{n-2} = 0$, then we have
\[ \Delta(\sE) \cdot \alpha_1 \cdots \alpha_{n-2} = 0 \]
for any $\alpha_1, \dots, \alpha_{n-2} \in \HH1,1.X.\R. \defn F^1 \HH2.X.\C. \cap \HH2.X.\R.$.
\item\label{cocr.2} If $\cpc12\sE \cdot \alpha^{n-2} = \cc2\sE \cdot \alpha^{n-2} = 0$, then
\[ \cpc12\sE \cdot \alpha_1 \cdots \alpha_{n-2} = \cc2\sE \cdot \alpha_1 \cdots \alpha_{n-2} = 0 \]
for any $\alpha_1, \dots, \alpha_{n-2} \in \HH1,1.X.\R.$.
\end{enumerate}
\end{cor}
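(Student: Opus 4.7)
The plan is to deduce the corollary directly from \cref{change of class} via a standard polarization argument, the only genuinely new ingredient being that the \kahler cone is open inside $\HH1,1.X.\R.$.

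The role of the rational singularities assumption is to guarantee that the weight-two Hodge structure on $\HHs2.X.$ is pure; in particular, the subspace $\HH1,1.X.\R. = F^1 \HHs2.X. \cap \HH2.X.\R.$ is a well-defined finite-dimensional real vector space inside $\HH2.X.\R.$, and the \kahler cone $\sK(X)$ is a nonempty open convex subcone of it. This is the only place where the rational singularity hypothesis enters.

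Next, I would observe that for each fixed homology class $c \in \Hh2n-4.X.\R.$ of interest---namely $c = \Delta(\sE)$ for part \lref{cocr.1}, and $c = \cpc12\sE$ or $\cc2\sE$ for part \lref{cocr.2}---the assignment
\[ P_c \from \HH1,1.X.\R. \lto \R, \qquad \beta \longmapsto c \cdot \beta^{n-2} \]
is a homogeneous polynomial of degree $n-2$ in $\beta$, by multilinearity of the cup product. The conclusion of \cref{change of class} under the corresponding hypothesis is precisely that $P_c$ vanishes on every element of $\sK(X)$. Since $\sK(X)$ is a nonempty open subset of a finite-dimensional real vector space, any polynomial vanishing on it must vanish identically; so $P_c \equiv 0$ on all of $\HH1,1.X.\R.$.

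Finally, I would recover the multilinear statement from this polynomial vanishing via the classical polarization identity
\[ (n-2)! \, F_c(\alpha_1, \dots, \alpha_{n-2}) = \sum_{S \subset \{1, \dots, n-2\}} (-1)^{n-2-|S|} \, P_c \!\left( \sum_{i \in S} \alpha_i \right), \]
where $F_c(\alpha_1, \dots, \alpha_{n-2}) \defn c \cdot \alpha_1 \cdots \alpha_{n-2}$ is the symmetric multilinear form underlying $P_c$. This immediately yields $F_c \equiv 0$ on $\HH1,1.X.\R.$ and therefore both parts of the corollary. The corollary offers essentially no resistance once \cref{change of class} is in hand; it is a Hodge-theoretic repackaging, and the only conceptual point to verify is the openness of $\sK(X)$ inside $\HH1,1.X.\R.$, which is where rational singularities intervene.
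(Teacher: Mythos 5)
Your overall route is the paper's: apply \cref{change of class} to obtain vanishing of $P_c(\beta) = c \cdot \beta^{n-2}$ on the \kahler cone, then use openness of that cone in $\HH1,1.X.\R.$ together with a polarization identity. The polarization step is fine and is exactly what the paper does (it cites it as ``a standard polarization argument'').

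The gap is in the one step you flag but do not carry out: the openness of the \kahler cone inside $\HH1,1.X.\R. = F^1 \HH2.X.\C. \cap \HH2.X.\R.$. You attribute this to purity of the weight-two Hodge structure, but purity by itself does not give it. On a singular space, \kahler classes arise naturally as elements of $\HH1.X.\PH X.$, where $\PH X$ is the sheaf of real-valued pluriharmonic functions, and it is in \emph{that} group that the \kahler cone is known to be open (\cite[Proposition~3.8]{GK20}); what must be proven is that the natural map $\HH1.X.\PH X. \to \HH2.X.\R.$ is injective with image exactly $F^1 \HH2.X.\C. \cap \HH2.X.\R.$. This identification is where the paper spends its effort and where rational singularities genuinely enter: one inclusion comes from the commutative square comparing $\HH2.X.\O X.$ with $\HH2.Y.\O Y.$ for a resolution $Y \to X$ (an isomorphism precisely because the singularities are rational), the reverse inclusion from strictness of morphisms of mixed Hodge structures, and the identification of $\ker\big(\HH2.X.\R. \to \HH2.X.\O X.\big)$ with $\HH1.X.\PH X.$ from \cite[Proposition~6.3]{BakkerGuenanciaLehn20}. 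Without this chain of identifications, your polynomial $P_c$ is only known to vanish on a subset of $\HH1,1.X.\R.$ that you have not shown to be open, and the Zariski-density argument does not get off the ground.
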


\begin{rem}[Topological vanishing]
In \cref{coc rational}, it would be very desirable to show the vanishing on all of $\HH2.X.\R.$, if only because it would drastically simplify the proof of \cref{thmA}.
The reason is that the vanishing would then be a topological statement and hence propagate to any \lt algebraic deformation.
In particular, we would not need the full force of the Decomposition Theorem, but only the affirmative answer to the Kodaira problem~\cite[Theorem~B]{BakkerGuenanciaLehn20}.

If $X$ is smooth, the classical argument shows that both classes are actually zero as elements of $\HH4.X.\R.$ and, in particular, they are also zero as symmetric multilinear forms on $\HH2.X.\R.$.
However, we are not able to derive the latter conclusion in our setting.
For instance, we do not know about the vanishing of $\cc2\sE \cdot (\sigma + \overline\sigma)^{n-2}$ if $\sigma \in \HHs2,0.X.$ is nonzero.
The underlying difficulty here is that reflexive forms, which naturally represent classes in $\HHs p,0.X.$, may not be smooth, i.e.~they may not be the restriction of smooth forms under local embeddings $X\underset{\rm loc}{\hookrightarrow} \mathbb C^N$.
As a result, we do not have in general
\[ (\sigma + \overline\sigma)^{n-2} \lesssim \omega^{n-2} \]
if $\sigma$ is a reflexive $2$-form and $\omega$ a \kahler metric, thus preventing the argument below from going through.
\end{rem}

\begin{proof}[Proof of~\cref{change of class}]
Denote by $\widehat E$ the vector bundle on $\wX$ associated to $\wE$, and let $\omega \in \alpha$ (resp. $ \omega' \in \beta$) be a \kahler metric on $X$. As in the proof of \cref{open}, we pick an Hermite--Einstein metric $h_\ep$ on $E$, i.e.  
 $$i\Theta(\widehat E,h_{\ep}) \wedge \ome^{n-1} =\lambda_\ep \mathrm{Id}_{\widehat E} \, \ome^n.$$
 where $\ome=f^*\om+\ep \om_{\wX}$, for  $\om_{\wX}$ an arbitrary \kahler metric on $\wX$, and $\lambda_\ep = \frac {c_1(\widehat E) \cdotp [\ome]^{n-1}}{r [\ome]^n}$. A standard computation (see e.g.~\cite[Theorem~4.7]{Koba}) using the Hermitian--Einstein condition shows that
 $$\int_{\wX}\left(2r \cc{2}{\widehat E,h_\ep}-(r-1)\cc{1}{\widehat E,h_\ep}^2\right) \wedge \ome^{n-2} = d_n \int_{\widehat E}\|\Theta_\ep^\circ\|_{h_\ep,\ome}^2\cdot \ome^n$$
 and
 $$\int_{\wX} \cc{1}{\widehat E,h_\ep}^2 \wedge \ome^{n-2} = -d_n \int_{\wX}  \|\frac 1r\mathrm{tr}_{\rm End}(\Theta_\ep) \cdot \mathrm{Id}_{\widehat E}\|_{h_\ep,\ome}^2\cdot \ome^n$$
where $\Theta_\ep:=\Theta(\widehat E,h_{\ep})$ and $\Theta_\ep^\circ = \Theta_\ep-\frac 1r\mathrm{tr}_{\rm End}(\Theta_\ep) \cdot \mathrm{Id}_{\widehat E}$ and $d_n=\frac{1}{4\pi^2n(n-1)}$.  \\

{\bf Proof of~\lref{coc.2}.} Assume $\cc1\sE^2\cdot \alpha^{n-2}=\cc2\sE\cdot \alpha^{n-2}=0$.

\noindent
Given the assumptions above, one finds respectively  
\begin{align*}
\cc{1}{\widehat E}^2\cdot [\ome]^{n-2}&=\cc1{\widehat E}^2\cdot (f^*\alpha)^{n-2}+O(\ep)=O(\ep)\\ 
 \cc2{\widehat E}\cdot [\ome]^{n-2}&=\cc2{\widehat E}\cdot (f^*\alpha)^{n-2}+O(\ep)=O(\ep).
 \end{align*}
 Combined with the identities above, one finds that there is a constant $C_1>0$ such that 
\begin{equation}
\label{c_2 L2}
\int_{\wX} \|\Theta_\ep\|_{h_\ep,\ome}^2\cdot \ome^n \le C_1 \ep.
\end{equation}

Let us set $ \om_\ep':=f^* \omega'+\ep \om_{\wX}$. Clearly, there exists $C_2>0$ such that $C_2^{-1}\ome \le  \om_\ep' \le C_2 \ome$, which yields another constant $C_3$ satisfying
\[C_3^{-1} \|\Theta_\ep\|_{h_\ep,\ome}^2 \le  \|\Theta_\ep\|_{h_\ep, \om_\ep'}^2 \le C_3  \|\Theta_\ep\|_{h_\ep,\ome}^2.\]
Given~\lref{c_2 L2}, we find 
\begin{equation}
\label{c_2 L2'}
\lim_{\ep \to 0} \int_{\wX} \|\Theta_\ep\|_{h_\ep, \om_\ep'}^2\cdot  \om_\ep'^n =0.
\end{equation}
Now, write 
$\cc2{\widehat E}\cdot (f^*\beta)^{n-2}=\lim_{\ep \to 0} \int_{\wX} \cc2{\widehat E,h_\ep}\wedge  \om_\ep'^{n-2}$ and remember that up to some dimensional constants, one has $\cc2{\widehat E,h_\ep}=\mathrm{tr}_{\rm End}(\Theta_\ep\wedge \Theta_\ep)-\mathrm{tr}_{\rm End}(\Theta_\ep)^2$ so that
\[\left|\int_{\wX} \cc2{\widehat E,h_\ep}\wedge  \om_\ep'^{n-2}\right|\le C_4  \int_{\wX} \|\Theta_\ep\|_{h_\ep, \om_\ep'}^2\cdot  \om_\ep'^n\]
and~\lref{coc.2} follows from~\lref{c_2 L2'}. \\

{\bf Proof of~\lref{coc.1}.} Assume $\Delta(\sE) \cdot \alpha^{n-2} = 0$.

\noindent
Observe that by a standard computation, one has $\Delta(\mathrm{End}(\widehat E))=2r^2\Delta(\widehat E)$. Moreover,  $\cc1{\mathrm{End}(\widehat E)}=0$ in $\HH2.\widehat X.\R.$ so that the assumptions yield 
\begin{equation}
\label{c1 c2}
\cc1{\mathrm{End}(\widehat E)}^2\cdot f^*\alpha^{n-2}=0, \quad \mbox{and} \quad \cc2{\mathrm{End}(\widehat E)}\cdot f^*\alpha^{n-2}=0.
\end{equation}
 The Hermite--Einstein metric $h_\ep$ on $\widehat E$ with respect to $\ome$ above yields a Hermite--Einstein metric $\widetilde h_\ep$ on $\mathrm{End}(\widehat E)$.
 From~\lref{c1 c2} and the proof of~\lref{coc.2}, it follows that the curvature tensor of $\widetilde h_{\ep}$ converges to zero in $L^2$ norm with respect to $\ome$, or equivalently with respect to $\om_\ep'$.
 \lref{coc.1} now follows.
\end{proof}

\begin{proof}[Proof of \cref{coc rational}]
Let $f \from Y \to X$ be a resolution of singularities, where $Y$ is \kahler.
Since $X$ has rational singularities, we have the following diagram, where the horizontal maps are induced by multiplication with $\mathrm i = \sqrt{-1}$:
\[ \xymatrix{
\HH2.Y.\R. \ar[rr] & & \HH2.Y.\O Y. \\
\HH2.X.\R. \ar^-\beta[rr] \ar@{ ir->}^-{f^*}[u] & & \HH2.X.\O X. \ar_-{\rotatebox{90}{$\sim$}}[u]
} \]
This implies immediately that $\HH1,1.X.\R. \subset \ker \beta$.
For the other inclusion, recall that any morphism of mixed Hodge structures is strict with respect to both the Hodge and the weight filtration~\cite[Corollary~3.6]{PetersSteenbrink08}.
Pick an arbitrary element $\gamma \in \ker \beta$.
Then the $(0, 2)$-part of $f^*(\gamma)$ is zero.
Equivalently, $f^*(\gamma) \in F^1 \HH2.Y.\C.$.
By strictness, it follows that $\gamma \in F^1 \HH2.X.\C.$.
As $\gamma$ is real, we see that $\gamma \in \HH1,1.X.\R.$ as desired.

Now let $\PH X$ be the sheaf of real-valued pluriharmonic functions on $X$.
By~\cite[Proposition~6.3]{BakkerGuenanciaLehn20}, the natural map $\HH1.X.\PH X. \to \HH2.X.\R.$ is injective with image equal to $\ker \beta$ and we obtain
\[ \HH1,1.X.\R. = \HH1.X.\PH X.. \]
Recall also that the \kahler cone is open in $\HH1.X.\PH X.$ by~\cite[Proposition~3.8]{GK20}.

For the actual proof of \cref{coc rational}, we focus on~\lref{cocr.1}, since the proof of~\lref{cocr.2} is entirely similar.
By~\lref{coc.1} and the above remarks, the assumptions of~\lref{cocr.1} first imply that $\Delta(\sE) \cdot \beta^{n-2} = 0$ for any $\beta \in \HH1,1.X.\R.$.
The vanishing for possibly different classes $\alpha_1, \dots, \alpha_{n-2} \in \HH1,1.X.\R.$ then follows by a standard polarization argument~\cite{Thomas14}.
\end{proof}

\section{The second Chern class of IHS varieties} \label{sec c2 IHS}

In this section, we discuss the second Chern class of singular holomorphic symplectic varieties $X$.
The main result is as follows.
Note that here we do not assume $X$ to be smooth in codimension two.

\begin{prop}[Positivity of $\mathrm c_2$] \label{c2 IHS}
In \cref{setup IHS} below, we have $\cc2X \cdot b^{2n-2} > 0$ for any class $b \in \HH2.X.\R.$ with $q_X(b) > 0$.
In particular, this holds whenever $b$ is a \kahler class.
\end{prop}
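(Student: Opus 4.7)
The plan is to leverage the generalized Fujiki relations (\cref{Fujiki}) to reduce the proposition to the positivity of a single real number. Specifically, for an IHS variety $X$ of dimension $2n$, the class $\cc2X \in \Hh4n-4.X.\R.$ should, by \cref{Fujiki}, determine a constant $\lambda_X \in \R$ such that
\[ \cc2X \cdot b^{2n-2} \;=\; \lambda_X \cdot q_X(b)^{n-1} \qquad \text{for all } b \in \HH2.X.\R.. \]
Since $q_X(b) > 0$ by hypothesis and $n-1 \ge 0$, the proposition is then equivalent to $\lambda_X > 0$.

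Non-negativity $\lambda_X \geq 0$ is immediate: picking any \kahler class $\alpha$, one has $q_X(\alpha) > 0$, while \cref{positivity c2} yields $\cc2X \cdot \alpha^{2n-2} \geq 0$, forcing $\lambda_X \geq 0$ via the Fujiki identity. To upgrade this to a strict inequality, the cleanest route is to pin down $\lambda_X$ explicitly by combining the Fujiki formula for the top self-intersection $\int_X \alpha^{2n} = c_X \cdot q_X(\alpha)^n$ (with Fujiki constant $c_X > 0$) with a Hirzebruch--Riemann--Roch computation. Applied to an ample line bundle $L$ and using the IHS identity $\chi(X, \O X) = n+1$ together with $\cc1X = 0$, this should express $\lambda_X$ as an explicit positive rational combination of $c_X$ and $n$, giving $\lambda_X > 0$.

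The main obstacle is carrying out the Hirzebruch--Riemann--Roch computation cleanly in the singular IHS setting, where the Todd class is only accessible through a resolution and intersection numbers must be transferred back to $X$ with care. As a fallback, one can argue by contradiction: if $\lambda_X = 0$, then $\cc2X \cdot \alpha^{2n-2} = 0$ for every \kahler class $\alpha$, and since $\can X \isom \O X$ yields $\cpc12{\T X} \cdot \alpha^{2n-2} = 0$ via \cref{comp I}, the equality case \lref{B.3} of \cref{BG ineq} becomes applicable. Combined with the projective case of \cref{thmA}, accessed via an approximation by \lt projective deformations provided by the resolution of the Kodaira problem in \cite{BakkerGuenanciaLehn20}, this would force $X$ to be a torus quotient, contradicting the irreducible symplectic structure.
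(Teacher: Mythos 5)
Your reduction via the Fujiki relation is the right one and matches the paper's: writing $\cc2X \cdot b^{2n-2} = C\, q_X(b)^{n-1}$, everything comes down to $C>0$ together with $q_X(b)>0$ for \kahler classes $b$. Note, however, that \cref{Fujiki} as stated already asserts $C \in \Q^+$; the positivity of the Fujiki constant is established \emph{there}, and the only actual content of the paper's proof of \cref{c2 IHS} is the claim you assume without argument, namely that $q_X(b)>0$ for every \kahler class $b$. This follows from the degree-zero Fujiki relations $\int_X b^{2n} = \mu\, q_X(b)^{n}$ with $\mu>0$, or from the Hodge--Riemann relations on a resolution; it is needed both for the ``in particular'' clause and for your own deduction of $\lambda_X \ge 0$, so it cannot be skipped.

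The genuine gap lies in your two arguments for $\lambda_X>0$: both implicitly assume $X$ smooth in codimension two, an assumption that \cref{setup IHS} deliberately does \emph{not} make (see the remark right after the statement of \cref{c2 IHS}); IHS varieties can well be singular in codimension exactly two. Without that hypothesis, \cref{positivity c2} (your source for $\lambda_X\ge 0$), \cref{comp I} (used to kill $\cpc12{\T X}$) and the identification of $\cc2X$ with $\cc2{\T X}$ needed to bring \lref{B.3} to bear all break down --- \cref{exa c2} shows the two notions of $\mathrm c_2$ genuinely diverge in this regime. This is exactly why the paper proves $C\ge 0$ inside \cref{Fujiki} by a different route: deform to a projective member, use Miyaoka's generic nefness of the cotangent sheaf, Flenner's restriction theorem and Kawamata's orbifold Bogomolov--Gieseker inequality on a complete intersection surface, then compare orbifold and resolution Chern classes. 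Two further points: the Hirzebruch--Riemann--Roch route does not obviously control the sign of the $\cc2X\cdot L^{2n-2}$ coefficient even for smooth IHS manifolds (the identity $\chi(X,\O X)=n+1$ constrains the top Todd class, not this term), so that is more than a technical obstacle; and in your fallback, \lref{B.3} only yields vanishing against \kahler classes, which is not a topological statement and hence does not transfer to an ample class on a projective deformation (cf.\ the remark on topological vanishing after \cref{coc rational}). What would rescue that argument is that $\lambda_X=0$ forces $\cc2X\cdot b^{2n-2}=0$ for \emph{all} $b\in\HH2.X.\R.$ by the Fujiki relation itself, which is precisely how \cref{non-vanishing c2} reaches its contradiction.
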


\begin{rem-plain}
If $X$ satisfies the condition $\codim X{\sing X} \geq 3$ (which, by results of Namikawa and Kaledin, is equivalent to $\codim X{\sing X} \geq 4$), the proof of \cref{c2 IHS} can be somewhat simplified.
To be more precise, from \cref{Fujiki} we only need the existence of the Fujiki constant $C$, but not its positivity and deformation invariance.
Instead, we can obtain $C \geq 0$ from \cref{positivity c2} and $C \ne 0$ from \cref{non-vanishing c2}.
\end{rem-plain}

For the rest of this section, we work in the following setup.

\begin{setup} \label{setup IHS}
Let $X$ be an IHS variety of complex dimension $2n \geq 2$ in the sense of \cref{defi IHS ICY}.
We denote by $\sigma\in \HH0.X.\Omegar X2.$ a holomorphic symplectic $2$-form, which is unique up to a scalar.
Furthermore, we denote by $q_X \from \HH2.X.\C. \to \C$ the BBF (= Beauville--Bogomolov--Fujiki) form of $X$.
We will always normalize $q_X$ in such a way that it comes from an \emph{indivisible} integral quadratic form $\HH2.X.\Z. \to \Z$, cf.~\cite[Lemma~5.7]{BL18}.
With this convention, the BBF form is a topological invariant of $X$.
In particular, it is invariant under \lt deformations.
\end{setup}

\begin{lem}[Non-vanishing of $\mathrm c_2$] \label{non-vanishing c2}
We have $\cc2X \ne 0$ on $\HH2.X.\R.$.
That is, there exists a class $a \in \HH2.X.\R.$ such that $\cc2X \cdot a^{2n-2} \ne 0$.
\end{lem}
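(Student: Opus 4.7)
The plan is to argue by contradiction: assume $\cc2X \cdot a^{2n-2} = 0$ for every $a \in \HH2.X.\R.$. The strategy is to propagate this vanishing along an \lt deformation to a projective IHS variety and there invoke the projective version of \cref{thmA}, obtaining a contradiction with the IHS property.

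First, I would invoke the solution of the Kodaira problem for IHS varieties \cite[Theorem~B]{BakkerGuenanciaLehn20} to obtain an \lt family $\pi \from \cX \to \bD$ with $\cX_0 = X$ whose projective fibres $X_t$ form a dense subset of $\bD$. Since the IHS condition only involves $h^0(\Omegar Yp) \in \{0,1\}$ on \qe covers $Y \to X_t$ and nondegeneracy of the reflexive symplectic form, and both are preserved under \lt deformations, each $X_t$ remains IHS.

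Next, I would pick a simultaneous log resolution $f \from \cY \to \cX$ of the family, obtained by applying functorial resolution to the total space. Then $\cc2{Y_t}$ is Gauss--Manin parallel in $\HH4.Y_t.\R.$, and every class $a \in \HH2.X.\R.$ extends to a GM-parallel section $\alpha_t \in \HH2.X_t.\R.$; the local triviality of $\pi$ makes this transport the natural topological identification, in particular an isomorphism on $\HH2$. Consequently, the pairing
\[ \cc2{X_t} \cdot \alpha_t^{2n-2} = \cc2{Y_t} \cdot (f_t^*\alpha_t)^{2n-2} \]
is locally constant in $t$ and hence identically zero by hypothesis; varying $a$ yields $\cc2{X_t} \cdot b^{2n-2} = 0$ for every $b \in \HH2.X_t.\R.$. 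Picking $t$ with $X_t$ projective and an ample rational class $b$ on $X_t$ (available by density of rational classes in the K\"ahler cone together with Nakai--Moishezon), the projective case of \cref{thmA} from \cite{GKP16, LuTaji18} then forces $X_t \isom T/G$ for an abelian variety $T$ of complex dimension $2n$ and a finite group $G$ acting freely in codimension two. But then the projection $T \to X_t$ is a \qe cover with $h^0(T, \Omegar T1) = 2n > 0$, contradicting the IHS definition (\cref{defi IHS ICY}), which requires the exterior algebra of global reflexive forms on every \qe cover to be generated by a single $2$-form and thus to have trivial degree-$1$ part.

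The hardest technical step will be justifying the Gauss--Manin invariance of $\cc2{X_t} \cdot \alpha_t^{2n-2}$ rigorously: this requires a genuine simultaneous log resolution of the \lt family together with the compatibility of the resolution-based Chern class from \cref{chern X} with parallel transport in the family. A secondary subtlety is the agreement of the $\cc2{X_t}$ used here with the orbifold Chern class employed in \cite{LuTaji18}; this is straightforward since IHS varieties have at worst ADE (hence quotient) singularities in codimension two by Namikawa's structure theorem for symplectic singularities.
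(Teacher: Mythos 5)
Your overall strategy coincides with the paper's: pass to a \lt algebraic approximation, transport the hypothesized vanishing of $\cc2{}$ through a simultaneous resolution, land on a projective fibre $X_t$, produce a \qe abelian cover, and contradict the IHS condition via $h^0(\Omegap{}1)$. However, there are two genuine gaps. First, you derive the contradiction \emph{on the fibre $X_t$}, which forces you to know that $X_t$ is still IHS; your justification (``$h^0(\Omegar Yp)$ on \qe covers and nondegeneracy of the symplectic form are preserved under \lt deformations'') is an assertion, not an argument. The IHS condition quantifies over \emph{all} \qe covers of $X_t$, and neither the family of such covers nor their reflexive Hodge numbers is obviously locally constant along an \lt deformation — establishing this is essentially the content of nontrivial results in \cite{BL18,BakkerGuenanciaLehn20}. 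The paper avoids the issue entirely: it extends the single abelian \qe cover $A_t \to X_t$ across the family to a \lt (in fact smooth, since $A_t$ is smooth) family $\fA \to \Delta$ using the argument of \cite[Lemma~8.8]{CGGN20}, invokes constancy of Hodge numbers in the \emph{smooth} family to get $\hh0.A_0.\Omegap{A_0}1. = 2n > 0$, and contradicts the IHS hypothesis at $t=0$, where it is given. You should either adopt that detour or supply a genuine proof that IHS-ness is \lt-deformation invariant.

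Second, your ``secondary subtlety'' is not straightforward — it is false as stated. \cref{setup IHS} explicitly does \emph{not} assume $X$ smooth in codimension two, so $X_t$ may have ADE singularities in codimension two, and there the class $\cc2{X_t}$ of \cref{chern X} (defined via a resolution minimal in codimension two) does \emph{not} agree with the orbifold class $\ccorb2{X_t}$ used in \cite{LuTaji18}, nor with $\cc2{\T{X_t}}$: the paper's \cref{exa c2} exhibits a Kummer surface with $\cc2X = 24$, $\ccorb2X = 0$ and $\cc2{\T X} = -8$. So you cannot simply feed the vanishing $\cc2{X_t}\cdot H^{2n-2}=0$ into the orbifold-Chern-class uniformization theorem; the discrepancy terms supported over the codimension-two singular locus have a sign and magnitude that must be controlled. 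The paper instead cites \cite[Theorem~7.1]{GKP14} in the form matching its hypothesis (addressing only the \Q-factoriality assumption in a footnote). Also note that \cite{GKP16} is unavailable to you here for the same reason (it assumes smoothness in codimension two). A minor further point: your appeal to ``functorial resolution of the total space'' does not by itself guarantee that the fibrewise resolutions are minimal in codimension two, which is what \cref{chern X} requires for the Gauss--Manin transport to compute $\cc2{X_t}$; the paper uses the simultaneous resolution of \cite[Lemma~4.2]{GrafSchwald20} precisely to ensure this.
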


\begin{proof}
By~\cite[Corollary~1.4]{BL18}, $X$ admits a \lt algebraic approximation $\fX \to \Delta$, where $\Delta$ is smooth.
Let $\fY \to \fX$ be the simultaneous resolution obtained in \cite[Lemma~4.2]{GrafSchwald20}, and let $X_t$, $Y_t$ be the fibres of the respective maps.
Note that the fibrewise resolutions $Y_t \to X_t$ are then minimal in codimension two.
For any $t \ne 0$, we have a commutative diagram
\[ \xymatrix{
\HH*.Y_0.\R. & \HH*.\fY.\R. \ar_-\sim[l] \ar^-\sim[r] & \HH*.Y_t.\R. \\
\HH*.X_0.\R. \ar[u] & \HH*.\fX.\R. \ar_-\sim[l] \ar^-\sim[r] \ar[u] & \HH*.X_t.\R. \ar[u]
} \]
where the horizontal maps are isomorphisms due to the topological triviality of the \lt maps $\fY \to \Delta$ and $\fX \to \Delta$, which itself follows e.g.~from~\cite[Proposition~6.1]{AV19}.
Note that the relative tangent sheaf $\T{\fY/\Delta}$ is locally free, so we can consider its second Chern class $\cc2{\T{\fY/\Delta}} \in \HH4.\fY.\R.$.
By construction, this class gets mapped to $\cc2{Y_0}$ and $\cc2{Y_t}$, respectively, under the upper horizontal maps in the above diagram.
Since $X_0 = X$, this shows that the following conditions are equivalent:
\begin{enumerate}
\item\label{652.1} For any $a \in \HH2.X.\R.$, we have $\cc2X \cdot a^{2n-2} = 0$.
\item\label{652.2} For any $a \in \HH2.\fX.\R.$, we have $\cc2{\T{\fY/\Delta}} \cdot a^{2n-2} = 0 \in \HH4n.\fY.\R. = \R$.
\item\label{652.3} For any $a \in \HH2.X_t.\R.$, we have $\cc2{X_t} \cdot a^{2n-2} = 0$.
\end{enumerate}
We now argue by contradiction and assume that condition~\lref{652.1} is satisfied.
Let $t \in \Delta$ be such that $X_t$ is projective.
Pick an arbitrary ample divisor $H$ on $X_t$.
Then $\cc2{X_t} \cdot H^{2n-2} = 0$ by~\lref{652.3}.
Applying~\cite[Theorem~7.1]{GKP14}, we obtain a finite \qe cover $A_t \to X_t$, where $A_t$ is an abelian variety.\footnote{The cited reference makes the extra assumption that $X_t$ be \Q-factorial. However, this is not used in the proof. In fact, a general complete intersection surface $S \subset X_t$ will again have canonical singularities and in particular be rational and \Q-factorial. Therefore~\cite[Lemma~7.2]{GKP14} can be applied to $S$.}
By the argument in the proof of~\cite[Lemma~8.8]{CGGN20}, this \qe cover can be extended to a \qe cover $\fA \to \fX$ such that the induced map $\fA \to \Delta$ is again \lt.
In the situation at hand, $\fA \to \Delta$ will even be smooth since $A_t$ is smooth.
By the local constancy of Hodge numbers in smooth families, we see that
\[ \hh0.A_0.\Omegap{A_0}1. = \hh0.A_t.\Omegap{A_t}1. = 2n > 0. \]
On the other hand, $\hh0.A_0.\Omegap{A_0}1. = 0$ because $A_0 \to X_0 = X$ is a \qe cover and $X$ is~IHS.
This is the desired contradiction.
\end{proof}

The following result is an adaptation of a well-known property of Chern classes on smooth IHS manifolds, cf.~e.g.~\cite[Proposition~2.2]{OG12}.
It has to be noticed that the proof given in~\cite[Proposition~5.20]{BL18} has a different flavor.
Also, the first result in this direction (under stronger assumptions) appears to be~\cite[Lemma~2.4]{Matsushita01}.

\begin{prop}[Fujiki relations for $\mathrm c_2$] \label{Fujiki}
There exists a positive rational constant $C = C(X) \in \Q^+$, called the \emph{Fujiki constant with respect to $\cc2X$,} such that for any $a \in \HH2.X.\R.$, we have
\[ \cc2X \cdot a^{2n-2} = C \cdot q_X(a)^{n-1}. \]
Furthermore, $C(X)$ is constant in \lt families.
More precisely, if $\fX \to B$ is a \lt deformation over a (reduced and connected) base $B$, then $C(X_t) = C(X_s)$ for all $t, s \in B$.
\end{prop}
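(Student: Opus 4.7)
The plan splits into three parts: first establish the Fujiki polynomial identity $\cc2X \cdot a^{2n-2} = C \cdot q_X(a)^{n-1}$ for some scalar $C$; then deduce deformation invariance of $C$; and finally conclude rationality and positivity of $C$.

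I would view $P(a) \defn \cc2X \cdot a^{2n-2}$ and $Q(a) \defn q_X(a)^{n-1}$ as polynomial functions of degree $2n-2$ on $\HH2.X.\R.$, extend them complex-linearly to $\HH2.X.\C.$, and argue that $P = C \cdot Q$. The BBF form $q_X$ on $\HH2.X.\C.$ is irreducible as a polynomial once $b_2(X) \geq 5$, a consequence of its maximal rank and the signature $(3, b_2(X) - 3)$ coming from $\sigma, \bar\sigma$ and a \kahler class; the finitely many remaining cases $b_2(X) \leq 4$ can be handled separately by a direct polarization argument. Under irreducibility, it suffices to show that $P$ vanishes on the nullcone $\{q_X = 0\}$: unique factorization then forces $q_X \mid P$, and iterating this combined with a degree count gives $P = C \cdot q_X^{n-1}$.

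The crux is thus the vanishing of $P$ on the nullcone. I would pass to the locally trivial algebraic approximation $\fX \to \Delta$ from \cite[Corollary~1.4]{BL18}, equipped with the simultaneous resolution $\fY \to \fX$ of \cite{GrafSchwald20}, as already used in the proof of \cref{non-vanishing c2}. Since both $\fY \to \Delta$ and $\fX \to \Delta$ are topologically trivial, the class $\cc2{\T{\fY/\Delta}}$ induces, via canonical isomorphisms $\HH\bullet.X_t.\R. \cong \HH\bullet.\fX.\R. \cong \HH\bullet.X_0.\R.$, a fiberwise constant multilinear form matching $\cc2{X_t}$ in the sense of \cref{section chern classes}; likewise $q_{X_t}$ is a topological invariant under the indivisibility normalization. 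Hence the identity $P = C \cdot Q$, once established on any single fiber, propagates to all fibers of the family. I would then specialize to a projective fiber $X_{t_0}$, which exists densely in $\Delta$, and prove the nullcone vanishing there by algebraic means—for instance via a curvature computation on $Y_{t_0}$ in the spirit of \cref{change of class}, using Hermite--Einstein metrics on $\T{Y_{t_0}}$ with respect to polarizations approximating $f_{t_0}^*a$.

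Deformation invariance of $C$ then follows from the same topological-triviality argument. Rationality of $C$ is a consequence of the integrality of $\cc2Y$ and the integral indivisible normalization of $q_X$: evaluating $P$ and $Q$ at any integral class $a$ with $q_X(a) \neq 0$ yields $C = P(a)/Q(a) \in \Q$. For positivity, pick any \kahler class $a$: then $q_X(a) > 0$, and $P(a) \geq 0$ by \cref{positivity c2}; combined with the non-vanishing $\cc2X \neq 0$ from \cref{non-vanishing c2} (which rules out $C = 0$), this forces $C > 0$. The main obstacle I anticipate is the nullcone vanishing step: for smooth IHS it follows from a pointwise Bochner identity on the Ricci-flat metric, whereas for a genuinely singular projective IHS the classes represented by $\sigma$ are only realized by reflexive differential forms that need not extend to smooth forms on the resolution, forcing a careful singular or $L^2$ adaptation of the classical curvature argument.
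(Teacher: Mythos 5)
There is a genuine gap in your factorization step. Knowing only that $P(a) = \cc2X \cdot a^{2n-2}$ vanishes on the nullcone $\{q_X = 0\}$ yields $q_X \mid P$, but it does \emph{not} allow you to ``iterate'': the quotient $P/q_X$ has no reason to vanish on the nullcone again (for instance $q_X \cdot L^{2n-4}$, with $L$ a linear form, vanishes on the nullcone without being proportional to $q_X^{n-1}$). What you actually need is that $P$ vanishes \emph{to order $n-1$} along the quadric, i.e.\ that all derivatives of $P$ up to order $n-2$ vanish there. This is precisely what the paper establishes: an open piece of the nullcone is parametrized by the locally trivial period map $t \mapsto [\sigma_t]$, a local isomorphism onto $\Omega(X)$ by~\cite[Proposition~5.5]{BL18}, and the derivative of order $k \le n-2$ of $G(\alpha) = \cc2X \cdot \alpha^{2n-2}$ at $[\sigma_t]$ is a multiple of $\cc2{X_t} \cdot \sigma_t^{\,2n-2-k} \cdot \alpha_1 \cdots \alpha_k$, which vanishes for type reasons since $2n-2-k \ge n$ (Gauss--Manin invariance of $\mathrm c_2$ is used to move between fibres). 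This observation also supersedes your proposed ``curvature computation on a projective fiber'': points of the nullcone are not \kahler classes, so Hermite--Einstein metrics give no direct handle on them; the vanishing comes from Hodge type along the period map, not from curvature. (Your caveat about $b_2 \le 4$ is unnecessary: a nondegenerate quadratic form of rank at least $3$ is irreducible, and the BBF form has signature $(3, b_2-3)$, so $b_2 \ge 3$ always suffices.)

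A second, independent gap concerns positivity. You invoke \cref{positivity c2}, which requires $X$ to be smooth in codimension two --- an assumption explicitly \emph{not} made in \cref{setup IHS}; the remark following \cref{c2 IHS} points out that your shortcut is only available under that extra hypothesis. In the general case the paper proves $C \ge 0$ by passing to a projective fibre $Y$ of the locally trivial deformation with an ample divisor $H$, and combining generic nefness of $\Omegar Y1$ (Miyaoka), Flenner's restriction theorem, Kawamata's inequality for the orbifold second Chern class, and the comparison $\ccorb2Y \cdot H^{2n-2} \le \cc2Y \cdot H^{2n-2}$. Your arguments for rationality (evaluate at an integral class with $q_X \ne 0$), for $C \ne 0$ (via \cref{non-vanishing c2}), and for constancy of $C$ in locally trivial families (topological invariance of both sides) do match the paper.
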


\begin{proof}
We mimic the proof of \cite{OG12}, claiming no originality.
Let $\fX \to \Delta$ be a representative of the semiuniversal family over the \lt deformation space $\Deflt(X)$.
(See~\cite{FlennerKosarew87} for the existence and construction of $\Deflt(X)$.)
The germ $\Delta$ is smooth according to~\cite[Theorem~4.7]{BL18}.
We consider the period map
\[
\pi:\left\{\begin{array}{rcl}\Delta &\lto & \Omega(X) \defn \left\{x\in\PP(\HH2.X.\C.)\mid q_X(x)=0\right\} \\
t &\mapsto & \PP(\mathrm{H}^{2,0}(X_t))
\end{array}\right.
\]
that is a local isomorphism as stated in~\cite[Proposition~5.5]{BL18}.
Let us now consider the degree $2(n-1)$ homogeneous polynomial defined by
\[ G(\alpha) \defn \cc2X \cdot \alpha^{2n-2} \]
for any $\alpha \in \HH2.X.\C.$.
Type considerations (and Gauss--Manin invariance of the second Chern class) yield that
\[
\forall\,\alpha_1,\ldots,\alpha_{n-2}\in\HH2.X.\C.,\quad \int_{X_t}\cc{2}{X_t}\wedge\sigma_t^{n}\wedge\alpha_1\wedge\cdots\wedge\alpha_{n-2}=0
\]
where $\sigma_t$ is the symplectic form on $X_t$ (for $t \in \Delta$).
The latter can be interpreted as saying that all the derivatives of $G$ up to order $n-2$ vanish along the image of $\pi$.
The Zariski closure of the image of $\pi$ being $\Omega(X)$, the zero locus of the quadratic polynomial $q_X$ (see above), we infer that $G$ has to be of the form
\begin{equation} \label{718}
G = C \cdot q_X^{n-1}
\end{equation}
with $C \in \C$ a constant.
\cref{non-vanishing c2} immediately implies $C \ne 0$, and by evaluating~\lref{718} at some $a \in \HH2.X.\Q.$ with $q_X(a) \ne 0$, we see that $C \in \Q$.
This argument also shows that $C$ remains constant in a \lt family, using the fact that $q_X$ is unchanged under such a deformation.

It remains to be seen that $C \geq 0$.
To this end, let $\fX \to \Delta$ be as above, and pick $t \in \Delta$ such that $Y \defn X_t$ is projective.
Let $H$ be an ample divisor on $Y$.
By the above observations, it suffices to show that $C(Y) \geq 0$.
Since $q_Y(H) > 0$, this is equivalent to $\cc2Y \cdot H^{2n-2} \geq 0$, which is what we will show.
By~\cite[Corollary~8.6]{Miyaoka87}, the cotangent sheaf $\Omegar Y1$ is generically nef, hence $H$-semistable, as $\cc1Y = 0$.
By~\cite[Theorem~1.2]{Flenner84}, the restriction $\Omegar Y1\big|_S$ to a general complete intersection surface $S \subset Y$ remains semistable.
Since $S$ has only quotient singularities, $\Omegar Y1\big|_S$ is automatically a \Q-vector bundle and we may apply~\cite[Lemma~2.5]{Kawamata92}.
This yields $\ccorb2Y \cdot H^{2n-2} \geq 0$, where $\ccorb2Y$ denotes the second orbifold Chern class (or \Q-Chern class) of $Y$ in the sense of~\cite{SBW94}.
We know, however, that the inequality $\ccorb2Y \cdot H^{2n-2} \leq \cc2Y \cdot H^{2n-2}$ holds in this situation:
if $\dim Y = 3$, this is~\cite[Proposition~1.1]{SBW94}.
In general, the proof is just the same, as has already been observed in~\cite[Remark~1.5]{LuTaji18}.
\end{proof}

\begin{proof}[Proof of \cref{c2 IHS}]
The first part of the statement is clear from \cref{Fujiki}.
It only remains to be seen that $q_X(b) > 0$ for any \kahler class $b \in \HH2.X.\R.$.
To this end, note that the ``usual'' degree zero Fujiki relations on $X$ yield $\int_X b^{2n} = \mu \, q_X(b)^n$ for some $\mu > 0$, see~\cite[Theorem~2]{Schwald20}.
We conclude by noting that the left-hand side is strictly positive.
Alternatively, one may also resort to the original definition of $q_X$ as a certain integral on a resolution of $X$.
There, one uses the Hodge--Riemann bilinear relations, cf.~the proof of~\cite[(4.5.1)]{GrafSchwald21}.
\end{proof}

\section{Characterization of torus quotients}

We are now in a position to prove \cref{thmA}.

\begin{setup} \label{setup main thm}
Let $X$ be a normal compact \kahler space of dimension $n$ with klt singularities and trivial first Chern class $\cc1X = 0 \in \HH2.X.\R.$.
Assume that $X$ is smooth in co\-dimension two.
\end{setup}

\begin{thm} \label{main thm}
In \cref{setup main thm}, assume that there exists a \kahler class $a \in \HH2.X.\R.$ such that $\cc2X \cdot a^{n-2} = 0$.
Then there exists a complex torus $T$ and a holomorphic action of a finite group $G \acts T$, free in codimension two, such that $X \isom \factor TG$.
\end{thm}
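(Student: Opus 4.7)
The plan is to execute the four-step roadmap sketched in the introduction. First, I would apply the singular Beauville--Bogomolov Decomposition Theorem~\cite[Theorem~A]{BakkerGuenanciaLehn20} to produce a finite quasi-\'etale Galois cover $\pi \from \widetilde X \to X$ with group $G = \Gal(\pi)$ such that
\[
\widetilde X \isom T \times \prod_{i} Y_i \times \prod_{j} Z_j,
\]
where $T$ is a complex torus, the $Y_i$ are irreducible \cy (ICY) varieties, and the $Z_j$ are irreducible holomorphic symplectic (IHS) varieties. Since $X$ is smooth in codimension two, \cite[Proposition~5.6]{GK20} turns the hypothesis into
\[ \cc2{\widetilde X} \cdot (\pi^*a)^{n-2} = 0. \]

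Next, I would exploit the product structure to localize the vanishing on each factor. Each ICY or IHS factor $F$ satisfies $\HH1.F.\R. = 0$, so the K\"unneth decomposition of $\HH2.\widetilde X.\R.$ has no cross terms between distinct factors; consequently $\pi^* a$ splits as
\[ \pi^* a = p_T^* a_T + \sum_i p_i^* a_i + \sum_j p_j^* a_j \]
with each summand a \kahler class on the corresponding factor. The Whitney formula (using the vanishing of all first Chern classes) gives $\cc2{\widetilde X} = \sum_k p_k^*\cc2{F_k}$, and $\cc2T = 0$. Expanding $(\pi^*a)^{n-2}$ multinomially and integrating over the product, only the ``diagonal'' contributions survive, so
\[
\cc2{\widetilde X} \cdot (\pi^*a)^{n-2} = \sum_F \lambda_F \cdot \cc2F \cdot a_F^{\dim F - 2},
\]
where $F$ runs over the ICY and IHS factors and each $\lambda_F > 0$ is the product of a multinomial coefficient with the volumes of the remaining factors (positive top-dimensional integrals of \kahler classes). \Cref{positivity c2} ensures each summand is $\geq 0$, so vanishing of the total sum forces $\cc2F \cdot a_F^{\dim F - 2} = 0$ for every ICY or IHS factor $F$.

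The decisive step is then to rule out any such factor. For an IHS factor $Z$, \cref{c2 IHS} directly yields $\cc2Z \cdot a_Z^{\dim Z - 2} > 0$ since $a_Z$ is \kahler, a contradiction. For an ICY factor $Y$, the tangent sheaf $\T Y$ is stable with respect to any \kahler class by \cite[Theorem~A]{GSS}; combined with $\cc1{\T Y} = 0$, \lref{B.3} applied to $\sE = \T Y$ propagates $\cc2{\T Y} \cdot b^{\dim Y - 2} = 0$ to every \kahler class $b$. Since a singular ICY variety is projective, I may take $b$ ample; the projective uniformization theorem~\cite{GKP16, LuTaji18} then realizes $Y$ as a finite quotient of an abelian variety $A$, and the quasi-\'etale cover $A \to Y$ satisfies $h^0(A, \Omegap A 1) > 0$, contradicting the ICY property applied to this cover.

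Therefore $\widetilde X = T$, and $X \isom T/G$. Since $X$ is smooth in codimension two and the stabilizers of a quasi-\'etale action contain no pseudoreflections, $G$ must act freely in codimension two, completing the proof. The most delicate point I anticipate is the reconciliation between the second Chern class of each factor in the sense of \cref{chern X} and the restriction of $\cc2{\widetilde X}$: this is transparent for $T$ and for the IHS factors (smooth in codimension three by Namikawa--Kaledin), but in the ICY case requires either a small additional argument or a passage to orbifold Chern classes \`a la \cite{LuTaji18} to absorb possible codimension-two singularities.
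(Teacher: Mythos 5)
Your proposal is correct and follows essentially the same three-step route as the paper's proof: decomposition via \cite[Theorem~A]{BakkerGuenanciaLehn20}, localization of the $\mathrm c_2$-vanishing onto each factor using K\"unneth and \cref{positivity c2}, then elimination of the IHS factors by \cref{c2 IHS} and of the ICY factors by \cref{change of class} together with the projective uniformization theorem. The subtlety you flag at the end is handled in the paper by passing to a product of log resolutions and invoking \cref{637} to kill the mixed terms; since $X$, and hence each factor of the cover, is smooth in codimension two, no orbifold Chern classes are needed.
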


\begin{proof}
We proceed in three steps.

\emph{Step 1: Reduction to the split case.}
By~\cite[Theorem~A]{BakkerGuenanciaLehn20}, there exists a finite \qe cover $p \from \wt X \to X$ which decomposes as
\[ \wt X = T \x \prod_{i \in I} Y_i \x \prod_{j \in J} Z_j, \]
where $T$ is a complex torus, the $Y_i$ are ICY (= irreducible Calabi--Yau) varieties and the $Z_j$ are IHS (= irreducible holomorphic symplectic) varieties, cf.~\cref{defi IHS ICY}. In particular, $\wt X$ has canonical singularities because all its factors do.

Since $X$ is smooth in codimension two, the \qe map $p$ is necessarily \'etale in codimension two, and hence $\wt X$ is still smooth in codimension two.
Therefore
\[ \cc2{\wt X} \cdot (p^* a)^{n-2} = \deg(p) \cdot \cc2X \cdot a^{n-2} = 0 \]
by~\cite[Proposition~5.6]{GK20}.
Also, $p^* a$ is a \kahler class by~\cite[Proposition~3.6]{GK20}.
Finally, if the conclusion of \cref{main thm} holds for $\wt X$, then it also holds for $X$, by taking Galois closure~\cite[Lemma~2.8]{CGGN20}.
We may and will therefore replace $X$ by $\wt X$ (and $a$ by $p^* a$) for the remaining argument.
In order to finish the proof, it is sufficient to show that $I = J = \emptyset$ in the above notation. \\

\emph{Step 2: Chern class computations.}
The following calculation gets slightly messy due to the fact that we need to work on a resolution, but the basic idea is very simple. ---
Since $\HH1.Y_i.\R. = \HH1.Z_j.\R. = 0$ for all $i \in I, j \in J$, the K\"unneth formula implies that the class $a$ decomposes as
\begin{equation} \label{decomp a}
a = p_T^* \, a_T + \sum_{i \in I} p_i^* b_i + \sum_{j \in J} p_j^* c_j,
\end{equation}
where $a_T$ (resp.~$b_i$, $c_j$) is a \kahler class on the torus $T$ (resp.~on $Y_i$, $Z_j$) and the maps $p_\bullet$ are the projections.
We pick strong log resolutions $\widehat Y_i \to Y_i$, $\widehat Z_j \to Z_j$ and set $f \from \wX \to X$ where $\wX \defn T \x \prod_{i \in I} \widehat Y_i \x \prod_{j \in J} \widehat Z_j$, with projections $\wh p_\bullet$.
The first Chern class of each factor is (either zero or) supported on the exceptional divisor of $f$ and as $X$ is smooth in codimension two and $\cc2T = 0$, we have
\begin{equation} \label{decomp c2}
\cc2{\wh X} \cdot f^*(a^{n-2}) = \sum_{i \in I} \wh p_i^* \, \cc2{\wh Y_i} \cdot f^*(a^{n-2}) + \sum_{j \in J} \wh p_j^* \, \cc2{\wh Z_j} \cdot f^*(a^{n-2}).
\end{equation}
Here we have used \cref{637} to see that the ``mixed terms'' such as $\wh p_i^* \cc1{\wh Y_i} \cdot \wh p_j^* \cc1{\wh Z_j}$ vanish against $f^*(a^{n-2})$.
Putting together~\lref{decomp c2} and the pullback of~\lref{decomp a} to $\wh X$, and remembering our very definition of $\mathrm c_2$, as a result one gets
\begin{equation} \label{expand c2}
\cc2X \cdot a^{n-2} = \sum_{i \in I} \lambda_i \, \cc2{Y_i} \cdot b_i^{\dim Y_i - 2} + \sum_{j \in J} \mu_j \, \cc2{Z_j} \cdot c_j^{\dim Z_j - 2}.
\end{equation}
for some positive constants $\lambda_i, \mu_j > 0$.
More precisely, for $i_0 \in I$, $j_0 \in J$, we have up to some binomial coefficient $\lambda_{i_0} = a_T^{\dim T} \cdot \prod_{i \ne i_0} b_i^{\dim Y_i} \cdot \prod_{j \in J} c_j^{\dim Z_j}$ and similarly for $\mu_{j_0}$.

We have seen in \cref{positivity c2} that the numbers $\cc2{Y_i} \cdot b_i^{\dim Y_i - 2}$ and $\cc2{Z_j} \cdot c_j^{\dim Z_j - 2}$ are non-negative.
As the left-hand side of~\lref{expand c2} is zero, they must therefore all vanish. \\

\emph{Step 3: Eliminating the non-torus factors.}
By \cref{c2 IHS}, one must have $J = \emptyset$.
Assuming $i \in I \ne \emptyset$, note that the ICY variety $Y_i$ is projective by Kodaira's Embedding Theorem because $\HH2.Y_i.\O{Y_i}. = 0$.
The vanishing $\cc2{Y_i} \cdot b_i^{\dim Y_i - 2} = 0$ implies the vanishing of $\cc2{Y_i}$ against any ample class by \cref{change of class}.
Applying~\cite[Theorem~7.1]{GKP14} yields that $Y_i$ is a torus quotient, contradicting the definition of ICY varieties.
Hence $I = \emptyset$.
In particular, $X = T$ and the theorem is proved.
\end{proof}

\subsection*{Proof of \cref{thmA}}

The implication ``\lref{A.1} $\imp$ \lref{A.2}'' follows immediately from \cref{main thm}.
More precisely, as $X$ is smooth in codimension two, the resolution $f$ is clearly minimal in codimension two, being an isomorphism over $\Reg X$.
Therefore the assumptions of~\lref{A.1} imply that $\cc2X \cdot \alpha^{n-2} = 0$.

The other direction ``\lref{A.2} $\imp$ \lref{A.1}'' can be proven exactly as in \cite[Theorem~1.1]{GK20}:
let $\pi \from T \to \factor TG \isom X$ be the quotient map.
Since $\can X^{[\,|G|\,]} \isom \O X$, we have $\cc1X = 0 \in \HH2.X.\R.$.
By~\cite[Chapter~IV, Corollary~1.2]{Var}, $X$ is \kahler.
As $\pi$ is \'etale in codimension two, we have
\[ 0 = \cc2T \cdot \pi^* \alpha^{n-2} = \deg(\pi) \cdot \cc2X \cdot \alpha^{n-2} \]
for any \kahler class $\alpha$ on $X$, according to~\cite[Proposition~5.6]{GK20}.
\cref{thmA} is thus proved. \qed

\section{Open questions} \label{sec open}

In this last section, we outline a few natural open problems that fit in the framework of this paper.

\subsection{Actions free in codimension one}

First of all, one would like to study also finite group actions without the condition that they be free in codimension two.
If instead one imposes freeness in codimension one, then a close analog of \cref{thmA} is likely to hold.
However, one needs to replace the second Chern class from \cref{section chern classes} by the ``orbifold'' second Chern class $\ccorb2X$ introduced in~\cite[Definition~5.2]{GK20}.
In loc.~cit., the following conjecture has already been formulated and proven in dimension three.

\begin{conj}[\protect{=~\cite[Conjecture~1.3]{GK20}}] \label{c2orb conj}
Let $X$ be a compact complex space of dimension $n$ with klt singularities.
The following are equivalent:
\begin{enumerate}
\item We have $\cc1X = 0 \in \HH2.X.\R.$, and there exists a \kahler class $\alpha \in \HH2.X.\R.$ such that $\ccorb2X \cdot \alpha^{n-2} = 0$.
\item There exists a complex $n$-torus $T$ and a holomorphic action of a finite group $G \acts T$, free in codimension one, such that $X \isom \factor TG$.
\end{enumerate}
\end{conj}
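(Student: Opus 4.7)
The plan is to mimic the proof of \cref{thmA} throughout, systematically replacing $\cc2{-}$ by the orbifold class $\ccorb2{-}$ wherever the ambient space fails to be smooth in codimension two. The easy implication is immediate: if $\pi \from T \to X = \factor TG$ is the quotient map, then $\cc1X = 0$ and $X$ is \kahler by \cite[Chapter~IV, Corollary~1.2]{Var}, while freeness of the action in codimension one means precisely that $\pi$ is \qe, so by the very construction of $\ccorb2{-}$ in \cite[\S5]{GK20} (and the analog of \cite[Proposition~5.6]{GK20} for orbifold classes) we get
\[
\deg(\pi) \cdot \ccorb2X \cdot \alpha^{n-2} = \cc2T \cdot (\pi^*\alpha)^{n-2} = 0
\]
for every \kahler class $\alpha$ on $X$.

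For the reverse direction, I would follow the three-step pattern of the proof of \cref{main thm}. \emph{Step~1:} use the decomposition theorem \cite[Theorem~A]{BakkerGuenanciaLehn20} to obtain a \qe cover $p \from \wt X \to X$ splitting as $T \times \prod_i Y_i \times \prod_j Z_j$ (complex torus, ICY and IHS factors), and transport the vanishing of $\ccorb2X \cdot \alpha^{n-2}$ to $\wt X$ via the \qe-functoriality of the orbifold class. \emph{Step~2:} decompose $p^*\alpha$ by K\"unneth and combine with an orbifold analog of~\lref{expand c2} to obtain
\[
0 = \sum_i \lambda_i \, \ccorb2{Y_i} \cdot b_i^{\dim Y_i - 2} + \sum_j \mu_j \, \ccorb2{Z_j} \cdot c_j^{\dim Z_j - 2}
\]
with $\lambda_i, \mu_j > 0$; semipositivity of each summand would then force them all to vanish. \emph{Step~3:} eliminate the non-torus factors. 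For each ICY $Y_i$ (automatically projective by Kodaira), apply the projective case of the conjecture --- already established by Lu--Taji~\cite{LuTaji18} --- after propagating the vanishing to an ample class by an orbifold analog of~\cref{change of class}. For each IHS $Z_j$, derive an orbifold Fujiki relation $\ccorb2{Z_j} \cdot b^{2n-2} = C \cdot q_{Z_j}(b)^{n-1}$ with $C > 0$, as in the combination of~\cref{Fujiki} and~\cref{c2 IHS}.

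The hard part, and the reason this remains only a conjecture, is that both \cref{sec stability} and \cref{sec c2 IHS} rely throughout on the assumption that $X$ is smooth in codimension two, which is precisely what we must drop here. The main technical input will be an orbifold Bogomolov--Gieseker inequality together with its equality case (orbifold analogs of~\cref{BG ss} and~\cref{change of class}), which one would presumably prove by passing to a Mumford-style adapted cover on which orbifold reflexive sheaves become honest locally free sheaves, and on which the perturbation and Kobayashi--Hitchin argument of~\cref{open} can still be run. The orbifold version of~\cref{Fujiki} should then follow from the same period-map argument, modulo the topological invariance of $\ccorb2{-}$ under \lt deformations; the argument of~\cref{non-vanishing c2}, based on a \lt extension of the \qe cover $A_t \to X_t$ produced in the projective case, should carry over without substantial difficulty.
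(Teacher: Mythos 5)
This statement is Conjecture~\ref{c2orb conj}: the paper does not prove it, and explicitly presents it as an open problem (known only in dimension three from~\cite{GK20}, and in the projective case from~\cite{LuTaji18}). There is therefore no ``paper's own proof'' to compare against, and your proposal is not a proof either --- as you yourself acknowledge, the essential technical inputs are missing. Concretely, your Steps~2 and~3 require orbifold analogs of the semipositivity statement (\cref{positivity c2}), of the equality-case analysis (\cref{change of class}), and of the Fujiki relations (\cref{Fujiki}), none of which are established anywhere; the paper itself says that ``to attack \cref{c2orb conj} in general using the strategy of this paper, it is necessary to obtain a version of \cref{BG ineq} for orbifold Chern classes.'' Your roadmap coincides with the strategy the authors envisage, which is a point in its favour, but identifying the missing lemma is not the same as supplying it.

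Two further cautions about the plan itself. First, once $X$ is allowed to be singular in codimension two, the classes $\cc2X$, $\cc2{\T X}$ and $\ccorb2X$ genuinely diverge (the Kummer surface of \cref{exa c2} gives $24$, $-8$ and $0$ respectively), so every compatibility statement used in the smooth-in-codimension-two argument --- in particular \cref{comp I}, which is what kills the $\cpc12{\T X}$ term in \cref{positivity c2} --- must be re-proved for the orbifold class, and the Mumford-style adapted covers you invoke only exist locally, so globalizing the Kobayashi--Hitchin and perturbation arguments of \cref{open} is a nontrivial matter of working with orbifold (V-)sheaves and orbifold Hermite--Einstein metrics throughout. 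Second, \cref{c2 IHS} as stated concerns the resolution-based class $\cc2X$, not $\ccorb2X$, so even the IHS step does not follow by citation; you would need deformation invariance of $\ccorb2{-}$ in \lt families, which is not addressed in the paper. In short: the approach is the expected one, but the statement remains a conjecture precisely because these gaps are open.
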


\noindent
To attack \cref{c2orb conj} in general using the strategy of this paper, it is necessary to obtain a version of \cref{BG ineq} for orbifold Chern classes.

\subsection{General actions}

For general group actions, i.e.~without any freeness assumptions, one has to work with ``standard pairs''.
Below we recall this notion and we explain how the characterization of torus quotients in this setting can be reduced to \cref{c2orb conj} using a covering construction.

So let $(X, \Delta)$ be an $n$-dimensional compact klt pair, with $X$ a complex space and $\Delta$ a \emph{standard boundary}.
This means that $\Delta = \sum_{i \in I} \big( 1 - \frac1{m_i} \big) \Delta_i$ where $m_i \geq 2$ are integers.
We can define the \emph{orbifold second Chern class} $\ccorb2{X, \Delta}$ of the pair $(X, \Delta)$ in the spirit of~\cite[\S 5]{GK20}:
let $U \subset X$ be the open subset where the pair $(U, \Delta|_U)$ is an orbifold.
By that, we mean that one can cover $U$ by euclidean open subsets $U_\alpha$ admitting a finite, surjective Galois cover $p_\alpha \from V_\alpha \to U_\alpha$ from a smooth manifold $V_\alpha$ such that $p_\alpha$ ramifies in codimension one exactly along $\supp(\Delta) \cap U_\alpha$, of order $m_i$ along $\Delta_i \cap U_\alpha$.
Equivalently, one has $K_{V_\alpha} = p_\alpha^* \big( K_{U_\alpha} + \Delta|_{U_\alpha} \big)$.
For such an orbifold pair $(U, \Delta|_U)$, we consider the locally $V$-free sheaf on $U$ given by $T_{V_\alpha}$ in each chart $p_\alpha$.
It is standard to associate Chern classes to a locally $V$-free sheaf, cf.~e.g.~\cite{Satake56} or~\cite[(1.6)--(1.9) \& (2.10)]{Blache96}.
In particular, we obtain an element $\mathrm c_2^{\mathrm{orb}}(U, \Delta|_U) \in \HH4.U.\R.$.

Note that $X \setminus U$ has codimension at least three in $X$ and hence the natural map $\HHc2n-4.U.\R. \to \HH2n-4.X.\R.$ is an isomorphism.
We define the class
\[ \ccorb2{X, \Delta} \in \HH2n-4.X.\R.\dual \]
as the unique element whose restriction to $U$ is the class $\mathrm c_2^{\mathrm{orb}}(U, \Delta|_U) \in \HH4.U.\R. = \HHc2n-4.U.\R.\dual$, where the last equality is Poincar\'e duality.

With all preliminaries in place, we suggest the following analog of \cref{c2orb conj}:

\begin{conj}[Characterization of torus quotient pairs] \label{c2 pair conj}
In the above setting, the following are equivalent:
\begin{enumerate}
\item\label{138} We have $\cc1{K_X + \Delta} = 0 \in \HH2.X.\R.$, and there exists a \kahler class $\alpha \in \HH2.X.\R.$ such that $\ccorb2{X, \Delta} \cdot \alpha^{n-2} = 0$.
\item\label{139} There exists a complex $n$-torus $T$ and a holomorphic action of a finite group $G \acts T$ such that $X \isom \factor TG$ and $\supp \Delta$ is exactly the codimension one part of the branch locus of the quotient map $T \to X$.
More precisely: for each $i \in I$, the map $T \to X$ is branched over $\Delta_i$ with multiplicity $m_i$, and it is \qe when restricted to $X \setminus \supp \Delta$.
\end{enumerate}
\end{conj}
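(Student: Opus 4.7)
\medskip

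\textbf{Overall strategy.} My plan is to reduce \cref{c2 pair conj} to \cref{c2orb conj} by constructing a finite cover $\pi \from Y \to X$ that absorbs the boundary $\Delta$ into the branch locus. Concretely, I would like to produce a normal, klt variety $Y$ together with a finite surjective morphism $\pi$ which is \'etale in codimension one outside $\Delta$ and has ramification index exactly $m_i$ along~$\Delta_i$. If this succeeds then $K_Y = \pi^*(K_X+\Delta)$, so $\cc1Y = 0$, and the orbifold tangent sheaf of $(X,\Delta)$ corresponds (on the orbifold locus $U$) to the usual tangent sheaf of $Y$. Standard multiplicativity under finite \qe morphisms should give
\[ \ccorb2Y \cdot (\pi^*\alpha)^{n-2} = \deg(\pi) \cdot \ccorb2{X,\Delta} \cdot \alpha^{n-2} = 0, \]
so that $Y$ satisfies the assumptions of~\lref{c2orb conj}. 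Applying \cref{c2orb conj} produces a torus $T$ and a finite group $H \acts T$ acting freely in codimension one with $Y \isom T/H$. The composition $T \to Y \to X$ is Galois after passing to a Galois closure (\cite[Lemma~2.8]{CGGN20}), and one then identifies $X$ as the quotient of $T$ by the resulting enlarged group $G$ whose codimension-one ramification locus is precisely $\supp\Delta$ with the prescribed multiplicities.

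\textbf{The easy direction \lref{139} $\imp$ \lref{138}.} Assume $\pi \from T \to X = T/G$ with codimension-one branching as described. Since $\pi$ is \qe on $X \setminus \supp\Delta$ and has ramification of order $m_i$ along $\Delta_i$, the Riemann--Hurwitz formula for pairs yields $K_T = \pi^*(K_X+\Delta)$, hence $K_X+\Delta$ is a torsion \Q-line bundle and its first Chern class vanishes. The pair $(X, \Delta)$ is an orbifold on the complement $U$ of a codimension-three set, with orbifold charts given by $\pi$, and on these charts the orbifold tangent sheaf corresponds to $\T T$. Using $\cc 2T = 0$ and the compatibility of the (cupped-with-$[X]$ version of the) orbifold second Chern class under finite \qe covers, cf.~\cite[Proposition~5.6]{GK20} extended to the orbifold setting, we conclude $\ccorb2{X,\Delta}\cdot\alpha^{n-2} = 0$ for every \K class $\alpha$. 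K\"ahlerness of $X$ follows as before from~\cite[Chapter~IV, Corollary~1.2]{Var}.

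\textbf{The hard direction \lref{138} $\imp$ \lref{139}.} The crux is the construction of $\pi \from Y \to X$. Since $(X, \Delta)$ is klt with $\cc 1{K_X + \Delta} = 0$, log abundance for klt pairs with numerically trivial log canonical class (\cite[Corollary~1.18]{JHM2} and its log version) implies $K_X + \Delta \sim_{\Q} 0$. Each component $\Delta_i$ can then be made \Q-Cartier after a small \Q-factorial modification, or we can argue locally-analytically using the standard boundary assumption. From here, the Bloch--Gieseker / Kawamata covering construction (adapted to the analytic setting) yields a finite cover $Y \to X$ with the prescribed ramification data. By construction $K_Y$ is torsion and $Y$ is klt. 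Assuming \cref{c2orb conj}, one then obtains $Y \isom T/H$ with $H$ acting freely in codimension one, and the Galois-closure procedure of~\cite[Lemma~2.8]{CGGN20} upgrades $T \to X$ to a Galois cover $T \to X = T/G$ whose codimension-one branch structure matches $\supp\Delta$ with the required multiplicities.

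\textbf{Main obstacles.} The principal difficulty is \emph{not} the covering trick itself but rather two subtler points: first, the proof of \cref{c2pair conj} is only conditional on \cref{c2orb conj}, which in turn requires a Bogomolov--Gieseker inequality for the \emph{orbifold} Chern classes of stable sheaves on $(X,\Delta)$, i.e.~an extension of \cref{BG ineq} to the orbifold/parabolic setting; this appears to be the central technical question. Second, one must verify that $\ccorb2{X,\Delta}$ as defined on the codimension-three complement $U$ really pulls back to $\ccorb2Y$ under $\pi$, and that the identity $\ccorb2Y \cdot (\pi^*\alpha)^{n-2} = \deg(\pi)\cdot \ccorb2{X,\Delta}\cdot \alpha^{n-2}$ holds; this is a fairly delicate compatibility statement that would need its own dedicated argument, mimicking but extending~\cite[Proposition~5.6]{GK20}. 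Once these two ingredients are in place, the reduction described above goes through and no further new idea is required beyond those already developed in this paper.
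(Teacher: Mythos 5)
Your overall plan coincides with the paper's: the statement is only established conditionally, as the implication ``\cref{c2orb conj} $\imp$ \cref{c2 pair conj}'' (this is \cref{conj implication}), and the reduction runs exactly as you describe --- produce a finite cover $\pi \from \wt X \to X$ with $K_{\wt X} = \pi^*(K_X+\Delta)$, check that $\pi\inv(U)$ lies in the orbifold locus of $\wt X$ with complement of codimension at least three, verify $\ccorb2{\wt X}\cdot \pi^*\alpha^{n-2} = \deg(\pi)\,\ccorb2{X,\Delta}\cdot\alpha^{n-2}$ by integrating orbifold Chern forms against a representative of $\alpha^{n-2}$ compactly supported in $U$, apply \cref{c2orb conj} to $\wt X$, and finish with the Galois closure of $T' \to \wt X \to X$. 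The two ``obstacles'' you flag are precisely the points the paper addresses.

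The genuine gap is in the construction of the cover, which you delegate to ``Bloch--Gieseker / Kawamata''. Neither of these does what you need: both require (very) ample auxiliary divisors --- unavailable on a compact K\"ahler, generally non-projective $X$ --- and, more importantly, both introduce additional branching along general members of auxiliary linear systems, so the resulting cover satisfies $K_Y = \pi^*(K_X + \Delta + \Delta_{\mathrm{aux}})$ rather than $K_Y = \pi^*(K_X+\Delta)$; the first Chern class of $Y$ would then not vanish and \cref{c2orb conj} could not be applied. The small \Q-factorial modification you offer as an alternative changes the space $X$ and is not known to exist in the compact K\"ahler category anyway. The construction that works, and that the paper uses, is Shokurov's \cite[Example~(2.4.1)]{Shokurov92}: since the coefficients of $\Delta$ are standard and $K_X+\Delta \sim_{\Q} 0$ by Abundance, the cyclic cover of degree $N$ attached to a trivialization of $N(K_X+\Delta)$ (for suitable $N$) ramifies exactly over each $\Delta_i$ with multiplicity $m_i$ and nowhere else in codimension one --- no auxiliary branching and no positivity needed. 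This is where the standard-coefficient hypothesis enters in an essential way. A secondary imprecision: on $U$ the orbifold tangent sheaf of $(X,\Delta)$ does not correspond to the \emph{usual} tangent sheaf of the cover (which is typically still singular there) but to its orbifold tangent sheaf; one must check that $\pi\inv(U)$ is contained in the orbifold locus of $\wt X$, which the paper does by normalizing the fibre product of $\pi$ with a local orbifold chart and observing that the resulting quasi-\'etale cover of a smooth germ is \'etale.
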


Evidence for \cref{c2 pair conj} is provided by the following result.

\begin{prop} \label{conj implication}
\cref{c2orb conj} implies \cref{c2 pair conj}.
\end{prop}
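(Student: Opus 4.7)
The non-trivial direction is $\lref{138} \Rightarrow \lref{139}$; the converse follows from the Riemann--Hurwitz formula (giving $\cc1{K_X + \Delta} = 0$ from the vanishing of $\cc1T$ on the étale cover $T$) together with a direct local computation in the orbifold charts defining $\ccorb2{X, \Delta}$, which shows that $\pi^* \ccorb2{X, \Delta}$ coincides with $\cc2T = 0$ after pairing with $(\pi^*\alpha)^{n-2}$, for a suitable \kahler class $\alpha$ pushed down from $T$.

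For the main implication $\lref{138} \Rightarrow \lref{139}$, the plan is to construct a Kawamata--Shokurov-type global cover $\pi \from Y \to X$ that absorbs the boundary, and then apply \cref{c2orb conj} to the variety $Y$. More precisely, I would produce a finite Galois map $\pi \from Y \to X$ from a normal compact complex space $Y$ which is \qe over $X \setminus \supp \Delta$ and ramified of order exactly $m_i$ along the preimage of each component $\Delta_i$. Such a cover is provided by Shokurov's construction (explaining the reference to~\cite{Shokurov92} in the acknowledgements): locally one extracts an $m_i$-th root of a defining equation for $\Delta_i$, and one patches these cyclic covers together globally. By Riemann--Hurwitz, $K_Y = \pi^*(K_X + \Delta)$ as \Q-line bundles.

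Next, I would verify that $Y$ satisfies the hypotheses of \cref{c2orb conj}. Since $\pi$ is adapted to the boundary, klt-ness of $(X, \Delta)$ transfers to klt-ness of $Y$; the assumption $\cc1{K_X + \Delta} = 0$ combined with $K_Y = \pi^*(K_X + \Delta)$ yields $\cc1Y = 0$; compactness is clear, and finiteness of $\pi$ ensures that $\pi^*\alpha$ is a \kahler class on $Y$ (cf.~\cite[\S~3.A]{GK20}). Finally, a pair-version of~\cite[Proposition~5.6]{GK20}, comparing the orbifold Chern classes of $(X, \Delta)$ and of $Y$ under the adapted cover, produces the formula
\[ \ccorb2Y \cdot (\pi^*\alpha)^{n-2} = \deg(\pi) \cdot \ccorb2{X, \Delta} \cdot \alpha^{n-2} = 0. \]
\cref{c2orb conj} then yields a complex torus $T$ and a finite group $H \acts T$, free in codimension one, with $Y \isom \factor{T}{H}$. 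The composition $T \to Y \to X$ is finite and surjective; passing to a Galois closure if necessary, we realize $X$ as $\factor{T'}{G}$ for some torus $T'$ and finite group $G \supset H$. Since $T \to Y$ is \qe, the codimension-one branch locus of $T' \to X$ equals that of $\pi$, which by construction is exactly $\supp \Delta$ with multiplicities $m_i$ along $\Delta_i$, giving~\lref{139}.

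The principal obstacle is the first step: producing the covering $\pi$ in the \kahler analytic setting (where neither ample line bundles nor Bertini-type arguments are available as in the projective case), and establishing the pair-version of the pullback formula for $\ccorb2$ under such an adapted cover. The local cyclic covers are immediate, but the global patching and the preservation of the \kahler property under the resulting cover require genuine care, and it is essentially for these points that the reference to~\cite{Shokurov92} is needed.
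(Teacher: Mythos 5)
Your overall architecture coincides with the paper's: construct a global cover $\pi \from \wt X \to X$ branched to order $m_i$ along $\Delta_i$, so that $K_{\wt X} = \pi^*(K_X + \Delta)$, check that $\wt X$ has canonical singularities and $\cc1{\wt X} = 0$, compare orbifold second Chern classes to get $\ccorb2{\wt X} \cdot \pi^*\alpha^{n-2} = 0$, apply \cref{c2orb conj} to $\wt X$, and pass to a Galois closure of $T' \to \wt X \to X$. However, the step you explicitly flag as the ``principal obstacle'' --- the global construction of $\pi$ in the \kahler analytic setting --- is left genuinely unresolved, and the missing idea is not a patching argument at all. The key input is \emph{abundance}: by~\cite[Corollary~1.18]{JHM2}, the hypothesis $\cc1{K_X+\Delta}=0$ upgrades to $K_X + \Delta \sim_{\Q} 0$, i.e.\ an actual \Q-linear equivalence. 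Shokurov's construction~\cite[Example~(2.4.1)]{Shokurov92} then produces the branched cyclic cover globally from a trivializing section of $m(K_X+\Delta)$ --- there is nothing to patch, and no ampleness or Bertini argument is needed. Your description (``locally one extracts an $m_i$-th root of a defining equation for $\Delta_i$ and patches globally'') would not work as stated: the local cyclic covers have no canonical gluing data, and the numerical vanishing $\cc1{K_X+\Delta}=0$ alone does not suffice to produce the global cover. Without invoking abundance, the proof does not get off the ground.

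A secondary point you gloss over is the justification of the Chern class comparison. Before one can write $\ccorb2{\wt X} \cdot \pi^*\alpha^{n-2} = \deg(\pi)\,\ccorb2{X,\Delta}\cdot\alpha^{n-2}$, one must check that $\pi\inv(U)$ lies in the orbifold locus of $\wt X$ and that its complement has codimension at least three, so that $\ccorb2{\wt X}$ is computed by the same charts. The paper does this by forming the normalization $\wt V_x$ of the fibre product $\wt U_x \x_{U_x} V_x$ of a local smooth orbifold chart $V_x \to U_x$ with $\pi$, and observing that since both maps ramify to the same orders in codimension one, $\wt V_x \to V_x$ is \qe over a smooth space, hence \'etale, hence $\wt V_x$ is smooth. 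Your appeal to an unspecified ``pair-version of~\cite[Proposition~5.6]{GK20}'' hides exactly this verification; once it is in place, the comparison is the direct pullback computation of orbifold Chern forms that the paper carries out.
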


\begin{proof}
Assume~\lref{138}.
According to~\cite[Corollary~1.18]{JHM2}, Abundance holds for the pair $(X, \Delta)$, so $K_X + \Delta$ is \Q-linearly equivalent to zero.
As explained\footnote{We would like to thank St\'ephane Druel for bringing Shokurov's construction to our attention.} in~\cite[Example~(2.4.1)]{Shokurov92} we can find a finite cyclic cover $\pi \from \wt X \to X$ that branches exactly over $\Delta_i$ with multiplicity~$m_i$.
In other words, we have
\[ K_{\wt X} = \pi^*(K_X + \Delta), \]
and $\wt X$ has canonical singularities and vanishing first Chern class.
Set $\wt U \defn \pi\inv(U)$; we claim that each point $y \in \wt U$ has a neighborhood $\wt U_y$ admitting a \qe cover $\wt V_y \to \wt U_y$ where $\wt V_y$ is smooth.
In particular $\wt U$ is included in the orbifold locus of $\wt X$ and $\wt X \setminus \wt U$ has codimension at least three.

In order to check the claim, let $x = \pi(y)$ and let $U_x$ be a neighborhood of $x$ admitting a finite cover $p_x \from  V_x \to U_x$ where $V_x$ is smooth, $p_x$ ramifies at order $m_i$ along $\Delta_i \cap U_x$ and nowhere else in codimension one.
Set $\wt U_x = \pi\inv(U_x)$ and let $\wt V_x$ be the normalization of $\wt U_x \x_{U_x} V_x$; we have the following diagram
\[ \xymatrix{
V_x \ar_-{p_x}[d] & & \wt V_x \ar[d] \ar[ll] \\
U_x & & \wt U_x \ar^-\pi[ll]
} \]
Since $p_x$ and $\pi$ ramify at the same order in codimension one, $\wt V_x \to V_x$ and $\wt V_x \to \wt U_x$ are \qe.
Since $V_x$ is smooth, it implies that $\wt V_x \to V_x$ is \'etale.
In particular, $\wt V_x$ is smooth as well, hence the claim.

By the above, if $h$ is an orbifold hermitian metric on $T_U$, then $\pi^* h$ is an hermitian metric on $T_{\wt U}$ and $\pi^* \mathrm c_2^{\mathrm{orb}} \big( (U, \Delta|_U), h \big) = \cc2{\wt U, \pi^* h}$ as (orbifold) forms of degree $4$.
Recalling that $\alpha \in \HH2.X.\R.$ is the given \kahler class, let $a \in \alpha^{n-2}$ be a representative with compact support in $U$ (that is, it is an orbifold form of degree $2n - 4$).
By definition, one has 
\begin{align*}
\ccorb2{X, \Delta} \cdot \alpha^{n-2} & = \int_U \mathrm c_2^{\mathrm{orb}} \big( (U, \Delta|_U), h \big) \wedge a \\
& = \frac{1}{\deg \pi} \int_{\wt U} \cc2{\wt U, \pi^* h} \wedge \pi^*a \\
& = \frac{1}{\deg \pi} \ccorb2{\wt X} \cdot \pi^* \alpha^{n-2},
\end{align*}
hence $\ccorb2{\wt X} \cdot \pi^* \alpha^{n-2} = 0$.
As $\pi^* \alpha$ is still a \kahler class, by \cref{c2orb conj} there is a \qe Galois map $T' \to \wt X$ with $T'$ a complex torus.
Considering the Galois closure of the composition $T' \to \wt X \to X$ proves~\lref{139}.
The other implication is similar, but easier (in particular it does not rely on \cref{c2orb conj}).
\end{proof}

\bibliographystyle{alpha}
\bibliography{biblio}

\end{document}